\documentclass[11pt]{amsart}
\usepackage{amssymb,amsmath,amsthm,amscd,mathrsfs,graphicx, color}
\usepackage[cmtip,all,matrix,arrow,tips,curve]{xy}
\usepackage[active]{srcltx}

\numberwithin{equation}{section}
\newtheorem{teo}{Theorem}[section]
\newtheorem{pro}[teo]{Proposition}
\newtheorem{lem}[teo]{Lemma}
\newtheorem{cor}[teo]{Corollary}

\theoremstyle{definition}

\newtheorem{exa}[teo]{Example}

\theoremstyle{remark}
\newtheorem{rem}[teo]{Remark}

\newtheorem{cnv}{}




\begin{document}
\bibliographystyle{amsplain}

\title[Stable reduction]{A tour of stable reduction with applications}
\author[Casalaina-Martin]{Sebastian Casalaina-Martin }
\address{University of Colorado at Boulder, Department of Mathematics,  Campus Box 395,
Boulder, CO 80309-0395, USA}
\email{casa@math.colorado.edu}

\thanks{The  author was partially supported by NSF grant DMS-1101333.}

\subjclass[2010]{14H10, 14J10, 14K10}

\dedicatory{To Joe Harris.}

\date{\today}
\begin{abstract} 
The stable reduction theorem for curves  asserts  that for a family of stable curves over the punctured disk,  after a finite base change, the family can be completed in a unique way to a family of stable curves over the disk.   In this survey we discuss stable reduction theorems in a number of different contexts.  This includes a review of recent results on abelian varieties, canonically polarized varieties, and singularities.  We also consider the  semi-stable reduction theorem and  results concerning simultaneous stable reduction.
\end{abstract}
\maketitle


\section*{Introduction}

The stable reduction theorem for curves \cite{dm,woodshole}  asserts that given a family of stable curves over the punctured disk,  after a finite base change, the family can be completed in a unique way to a family of stable curves over the disk.  In particular, the central fiber of the new family is determined, up to isomorphism, by the original family.     This theorem plays a central role in the study of curves.  A consequence is  the fundamental 
result that the moduli space of stable curves is compact.  Qualitatively,  the theorem provides control over degenerations of smooth curves: when studying one-parameter degenerations, one may restrict to the case where the limit has normal crossing singularities.  

In \cite[\S 3.C]{hm} Harris--Morrison give a beautiful treatment of the stable reduction  theorem from a computational perspective.  They outline a proof of the theorem that provides the reader with a method of completing this process in particular examples, and importantly, of identifying the central fiber of the new family.    The aim of this survey is to complement  \cite[\S 3.C]{hm} with stable reduction problems in other settings.

 Roughly speaking, by a stable reduction problem we mean the problem of determining a class of degenerations so that  a family over the punctured disk can, after a finite base change,  be extended in a unique way to a family over the disk. 
 
Typically the motivation will be a moduli problem, where one is given a particular class of geometric or algebraic objects that determine a non-compact moduli space.  The stable reduction problem can be viewed as providing a modular compactification of the moduli space.
 In the language of stacks, stable reduction  is equivalent to the valuative criterion of  properness for the moduli stack (see \S\ref{secsrstacks}).  In \S \ref{secexa}, we work out  an explicit motivating example.  
  The main cases we will consider in the survey are  stable reduction for abelian varieties \S \ref{secab}, curves \S\ref{seccurves}, and canonically polarized varieties \S \ref{secsrhd}.

Determining a class of degenerations that will provide a stable reduction theorem is often difficult.  
  In this situation one can begin with the qualitative goal of obtaining some level of control over degenerations.   For instance, one may focus on restricting the singularities of the central fiber, or controlling invariants such as monodromy \S \ref{secmono}.    
  
  In this direction, semi-stable reduction is the problem of filling in (possibly after a finite base change) a smooth family of schemes over the punctured disk  to a family where the total space is smooth, and the central fiber is a reduced scheme with simple normal crossing singularities.    Unlike the case of stable reduction, such a completion will not be unique.  On the other hand, the singularities (and topology) of a semi-stable reduction will typically be much simpler than that of a stable reduction.    
  
  The main result  in this context is 
 a theorem of Mumford et al.~\cite{mumetal}  stating that  semi-stable reductions exist in complete generality in characteristic $0$ (see \S \ref{secssr}). 
This plays a central role in many stable reduction theorems.  In particular, Koll\'ar--Shepherd-Barron--Alexeev have developed an approach to the stable reduction problem using log canonical models of semi-stable reductions.  We use the case of curves \S \ref{seccurves} and canonically polarized varieties \S\ref{seckol} to discuss this.

One can also consider the question of extending families over higher dimensional bases.  Given a stable reduction theorem one can then ask whether families over a dense open subset of a scheme of dimension $2$ or more can be extended after a generically finite base change.  We call this a simultaneous stable reduction problem.    For moduli spaces that are proper Deligne--Mumford stacks, it is well known that simultaneous stable reductions always exist (Theorem \ref{teossrfd}, \cite{fedorchuk}, \cite{edidinetal}).  However, in general, this is a delicate problem.  Explicitly describing such a generically finite base change can be quite difficult.   

We review simultaneous stable reduction in \S \ref{secsimsr}, where we focus on the cases of abelian varieties and curves.  
 One recent motivation for considering this type of problem has to do with resolving birational maps between moduli spaces.  The cases arising in  the Hassett--Keel pogram for the moduli space of curves  have received a great deal of attention recently; we review this in \S \ref{secade}.

In light of the breadth of the topic, to prevent this survey from becoming too lengthy, we have chosen to focus on  a  few cases that  have a historic connection to the stable reduction theorem for curves, capture the flavor of the topic in general, and which point to some of the recent progress in the field.   
We also include a number of examples.   
In the end, the material chosen reflects the author's  exposure to the subject, and he apologizes to those people whose work was not included.

\subsection*{Acknowledgements} 
 It is a pleasure to thank J.~Achter,   J.~Alper, J.~de Jong, M.~Fedorchuk,  D.~Grant,  C.~Hall, D.~Jensen, J.~Kass,  J.~Koll\'ar, S.~Kov\'acs,  R.~Laza,  M.~Lieblich, Z.~Patakfalvi, R.~Smith, R.~Virk and J.~Wise for discussions about various topics covered in the survey that have greatly improved the exposition.  Special thanks are also due to  J.~Achter,  J.~Alper,    D.~Jensen, J.~Kass,   J.~Koll\'ar,  S.~Kov\'acs,   R.~Laza,  Z.~Patakfalvi, R.~Smith, R.~Virk and the referee for detailed comments on  earlier drafts.

\subsection*{Notation and conventions}

\begin{cnv}
A \textbf{family of schemes} $f:X\to B$ will be a flat, surjective, finite type   morphism of schemes, of constant relative dimension.   The scheme $B$ will be called the \textbf{base}  of the family and $X$ the \textbf{total space} of the family.   For a point $b\in B$,  we denote by $X_b$ the fiber of $f$ over $b$.   
\end{cnv}

\begin{cnv}
We will typically use the following notation for spectrums of discrete valuation rings (DVRs).  For a DVR $R$ we will use the notation $K=K(R)$ for the fraction field, and $\kappa=\kappa(R)$ for the residue field. We will set $S=\operatorname{Spec} R$, with generic point $\eta=\operatorname{Spec}K$ and closed point $s=\operatorname{Spec}\kappa$.     
\end{cnv}

\begin{cnv}
If  $B$ is noetherian, and the family $f:X\to B$ is of constant relative dimension $d$, the \textbf{discriminant}, denoted $\Delta$, is the $0$-th Fitting scheme of the push-forward of the structure sheaf of the $d$-th Fitting scheme of the coherent sheaf $\Omega_{X/B}$.    The discriminant parameterizes the  singular fibers of the family.  Typically, we consider this in the case where either the $d$-th fitting scheme of $\Omega_{X/B}$ is finite over $B$, or $f$ is proper; in these  cases $\Delta$ is a closed subscheme of $B$.
\end{cnv}

\begin{cnv}
Let  $X$ be a scheme over an algebraically closed field $k$,  which is regular in codimension one,  and let $D$ be an  effective Weil divisor  on $X$.  We say $D$  is in \textbf{\'etale (resp.~Zariski or simple) normal crossing} position  if   $X$ is regular along the support of $D$ and  for each closed point $x\in \operatorname{Supp}(D)$ there exists an \'etale morphism (resp.~an open inclusion) $f:U\to X$ such that for any $u\in U$ with $f(u)=x$, there is a local system of parameters $u_1,\ldots,u_n$ for $\mathscr O_{U,u}$ so that the pull-back of $D$ via the composition $\operatorname{Spec}\mathscr O_{U,u}\to U\to X$, is  defined by 
a product $u_1^{n_1}\cdots u_r^{n_r}$, for some $0\le r\le n$ and some non-negative integers $n_1,\ldots,n_r$.  
We will say a divisor is \textbf{nc}, (resp.~\textbf{snc}) if it is in  \'etale normal crossing (resp.~simpe normal crossing) position.
\end{cnv}

\begin{cnv}
A \textbf{modification} is a 
proper, birational morphism.  An \textbf{alteration}  is a  generically finite, proper, surjective morphism.  
\end{cnv}

\begin{cnv}
A \textbf{germ} will be the spectrum of a complete local ring $A$ and we will use the notation $(X,x)$ with $X=\operatorname{Spec}A$  and $x$ the maximal idea of $A$. A 
\textbf{(germ of a) singularity}  will be a germ that is singular at $x$.  
  We will typically focus on \textbf{hypersurface singularities}; by this we mean the case where  $X=\operatorname{Spec}k[[x_1,\ldots,x_n]]/(f)$, $f\in k[[x_1,\ldots,x_n]]$ and $k$ is a field.  
We will say a singularity is \textbf{isolated} if $\mathscr O_{X,x'}$ is a regular local ring for all $x'\in X$ with $x'\ne x$.    
\end{cnv}

\section{An example via elliptic curves} \label{secexa}

In this section we start by briefly reviewing a stable reduction for a $1$-parameter family of elliptic curves degenerating to a cuspidal cubic, as  described in Harris--Morison \cite[Ch.~3.C]{hm}.  We then turn to the case of simultaneous stable reduction, and give an explicit computation of a simultaneous stable reduction for a versal deformation of a cuspidal cubic.  In other words, we  analyze all $1$-parameter degenerations at once.    
The presentation we give is a special case of a larger example described by Laza and the author in \cite{cml} (related to well known work of  Brieskorn   \cite{bruber} and Tyurina \cite[\S3]{tjurina0}; 
see also the recent work of  Fedorchuk  \cite[\S 5]{maksym}, \cite{fedorchuk}) and can be viewed as an  extension of the discussion in  Harris--Morrison \cite[p.129-30]{hm}.

\subsection{Stable reduction for a pencil of cubics}

Stable reduction concerns one parameter degenerations.  In this subsection we briefly review a stable reduction for a  family of non-singular plane cubics degenerating to a plane cubic with a cusp.    
For brevity, we will leave out any computations.   The reader is encouraged to read  \cite[Ch.~3.C]{hm}, where this example is worked out in detail (see also \S \ref{secandirect} and \S\ref{secmonocusp}).

Fix an algebraically closed field $k$ with characteristic not equal to $2$ or $3$ and consider the family $X\to B=\mathbb A^1_k$ given by:
$$
x_2^2+x_1^3+t_3=0,
$$
where $t_3$ is the parameter on $\mathbb A^1_k$.
The family is smooth away from $t_3=0$, and the central fiber has a cusp.  

\begin{figure}[htp]\label{figcuspfam}
\includegraphics[scale=0.25]{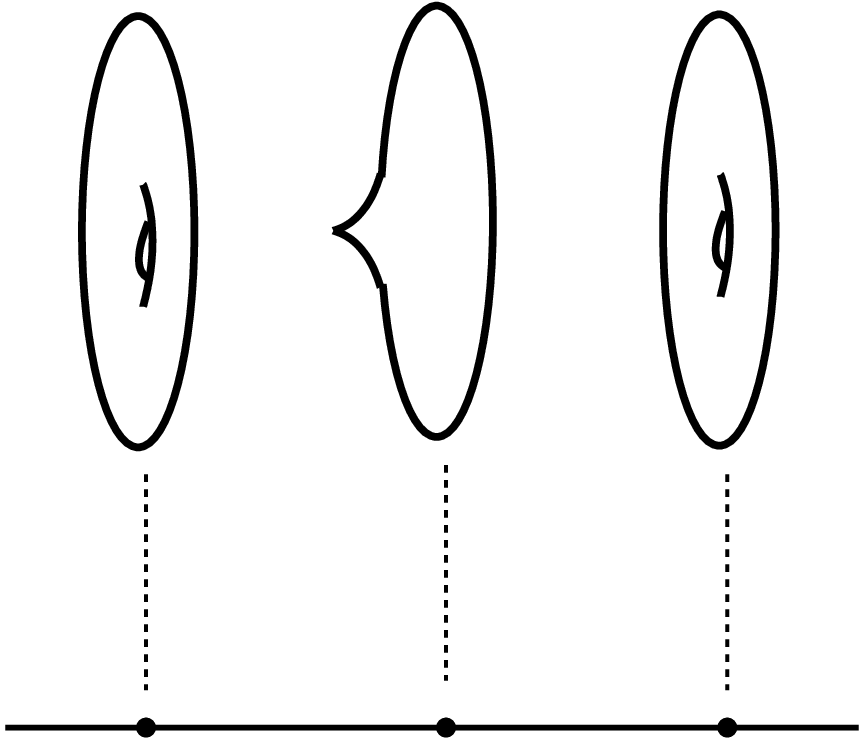}
\caption{A degenerate family}
\end{figure}

The goal of  stable reduction for curves is  to replace the central fiber of $X\to B$ with a  stable curve.  We will see via the theory of monodromy, and by a direct computation, that this is not possible without a degree six base change.    
 So  let $B'=\operatorname{Spec}k[t_3']\to \mathbb A^1_k$ be the degree six map given by $t_3'\mapsto (t_3')^6$.   After  base change we obtain a new  family $X'=B'\times_BX\to B'$, which is also smooth away from the central fiber, and has a cuspidal cubic as the central fiber. Let $U=\operatorname{Spec}k[t_3']_{t_3'}$.    
 
By an appropriate sequence of birational transformations of the surface $X'$ (e.g.~\cite[p.122-129]{hm} for a slight variation), one can obtain a new family $\widehat X\to B'$ that is isomorphic to $X'$ over $U$, and such that the central fiber $\widehat X_0$ of $\widehat X$ is a non-singular curve.  
\begin{figure}[htp]\label{figcuspfamsr}
\includegraphics[scale=0.25]{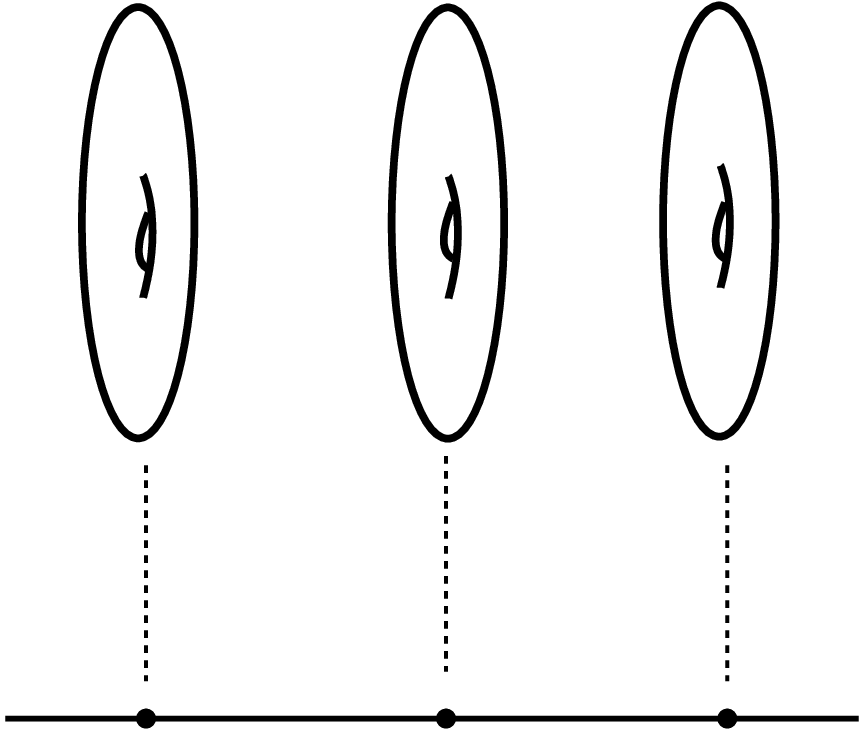}
\caption{The stable reduction}
\end{figure}
The family $\widehat X\to B'$ is called a stable reduction of the family $X\to B$.   
For an explicit computation with  equations, see 
 \S\ref{secmonocusp}  (and also \S \ref{secandirect}).

\subsection{A $2$-parameter family of cubics}  
We will next consider a concrete example of simultaneous stable reduction.    While in general this is a more delicate question than that of stable reduction, in this example we will be able to make the simultaneous stable reduction completely explicit.    We start by describing the family of curves, which can be viewed as a versal deformation of a cuspidal cubic.

Fix an algebraically closed field $k$ with characteristic not equal to $2$ or $3$.  
Consider the family  of curves 
$$
x_2^2+x_1^3+t_2x_1+t_3=0
$$
with parameters $t_2$ and $t_3$.   We denote  the family by $X\to B$.   One can easily check that the curve defined by the point $(t_2,t_3)$ is non-singular if and only if  $4t_2^3-27t_3^2\ne 0$.  The curve has a unique singularity, which is a node, if $4t_2^3-27t_3^2=0$ and $(t_2,t_3)\ne (0,0)$, and the curve has a unique singularity, which is a cusp, if $(t_2,t_3)=(0,0)$.
Despite the family technically being none of the following, we will view it simultaneously as a family of projective curves of arithmetic genus one, a degenerate family of abelian varieties, and  a deformation of a cusp.

\begin{figure}[htp]\label{figdegen}
\includegraphics[scale=0.25]{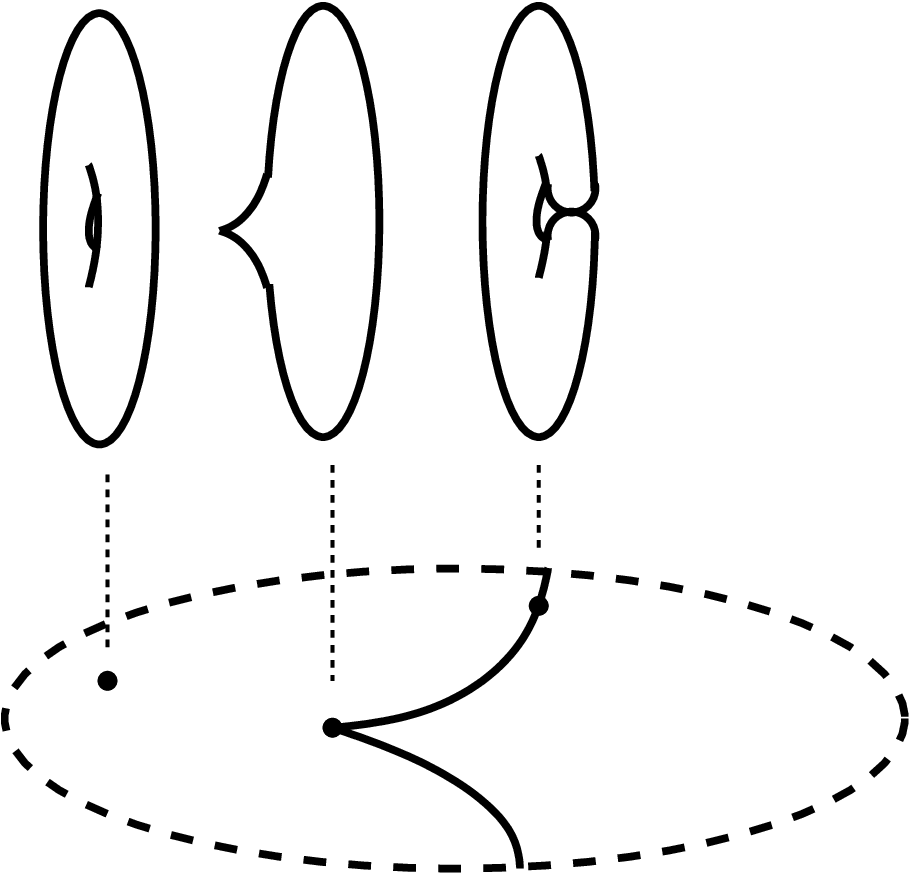}
\caption{A degenerate family}
\end{figure}

\begin{rem}
To make this discussion precise we should take
$$
 X=\operatorname{Proj}_{\mathbb A^2_k}\left(\frac{k[t_2,t_3][X_0,X_1,X_2]}{(X_0X_2^2+X_1^3+t_2X_0^2X_1+X_0^3t_3)}\right)\subseteq \mathbb P^2_k\times \mathbb A_k^2,
$$
$B=\mathbb A_k^2=\operatorname{Spec}k[t_2,t_3]$,  $\pi: X\to \mathbb A_k^2$ the morphism induced by the  second projection $\mathbb P^2_k\times \mathbb A^2_k\to \mathbb A^2_k$, and $\sigma_\infty:\mathbb A^2_k\to  X$   the section at infinity given by the ring homomorphism
$$
\frac{k[t_2,t_3][X_0,X_1,X_2]}{(X_0X_2^2+X_1^3+t_2X_0^2X_1+X_0^3t_3)}\to k[t_2,t_3][X_0]
$$
defined by the ideal $(X_1,X_2)$.  
We then obtain a diagram
\begin{equation} 
\xymatrix
{ 
 X \ar@{^(->}[r] \ar@{->}[rd]^\pi& \mathbb P^2\times \mathbb A^2_k\ar@{->}^{\pi_2}[d]\\
& \mathbb A_k^2 \ar@{->}@/^1.5pc/[lu]^{\sigma_\infty}.\\
}
\end{equation}
The morphism $\pi: X\to \mathbb A_k^2$ is a flat family of projective curves of arithmetic genus one.  The section $\sigma_\infty$ defines a group scheme structure and polarization on the generic fiber.  This makes the generic fiber a principally polarized abelian scheme of dimension one.   Restricting to germs, we obtain a deformation of a cusp.
\end{rem}

Let us make a few more informal observations.   Set 
$$
 G= X-\{(0,0,t_2,t_3): 4t_2^3-27t_3^2=0\}
$$
(where here we are taking $X$ to be the projective family). Then $\pi: G\to \mathbb A^2_k$ is a family of commutative  groups.  The group  parameterized by $(t_2,t_3)$ is a copy of $\mathbb G_m$, if $4t_2^3-27t_3^2=0$ and $(t_2,t_3)\ne (0,0)$.  The group is a copy of $\mathbb G_a$, if $(t_2,t_3)=(0,0)$.   These are the groups of line bundles of degree zero on the corresponding fibers.  In fact $G/\mathbb A^2_k$ is the relative (connected component of the) Picard scheme $\mathbf{Pic}^0_{ X/\mathbb A^2_k}$, and  $ X/\mathbb A^2_k$ is  the compactified  (connected component of the) Picard scheme  $\overline{\mathbf{Pic}}^0_{X/\mathbb A^2_k}$ (see Altman--Kleiman \cite{ak80}).

For the purpose of this discussion, we view it as pathological that the central fiber of the family $\pi:X\to \mathbb A_k^2$ has a cusp (and the central fiber of the family $\mathbf{Pic}^0_{X/\mathbb A_k^2}\to \mathbb A^2_k$ is an additive group).  Our goal will be to modify the family so that we may replace the central fiber with a nodal curve (or a copy of $\mathbb G_m$ in the case of the family of groups, or a collection of smooth components meeting transversally in the case of a singularity).

The problem can also be stated in stack-theoretic language.  Let $\overline {\mathcal M}_{1,1}$  be the moduli stack of Deligne--Mumford stable, one-pointed curves of arithmetic genus one, and let $\overline M_{1,1}$  be the coarse moduli space.    The family $X/\mathbb A_k^2$ defines a rational map $ \mathbb A^2_k\dashrightarrow \overline{\mathcal M}_{1,1}$ and we would like to give a resolution of this map.

\subsection{Explicit simultaneous stable reduction}\label{secandirect}
We now construct an explicit simultaneous stable reduction of the family.  We will do this in several steps, and then  
discuss a monodromy computation that sheds light on the problem.

\subsubsection{Step 1: pulling back by a Weyl (group) cover}\label{secwc}

Consider the map
$$
\{a_1+a_2+a_3=0\}\to \mathbb A_k^2
$$
$$
(a_1,a_2,a_3)\mapsto (a_1a_2+a_1a_3+a_2a_3,-a_1a_2a_3).
$$ 
The families obtained are given by the diagram below.

\begin{equation} 
\xymatrix
{ 
\{a_1+a_2+a_3=x_2^2+\prod_{i=1}^3(x_1-a_i)=0\}   \ar@{->}[d]   \ar@{->}[r] &  \{x_2^2+x_1^3+t_2x_1+t_3=0\} \ar@{->}[d]\\
\{a_1+a_2+a_3=0\}  \ar@{->}[r]&\mathbb A_k^2.
}
\end{equation}

\begin{figure}[htp]\label{figwc}
\includegraphics[scale=0.25]{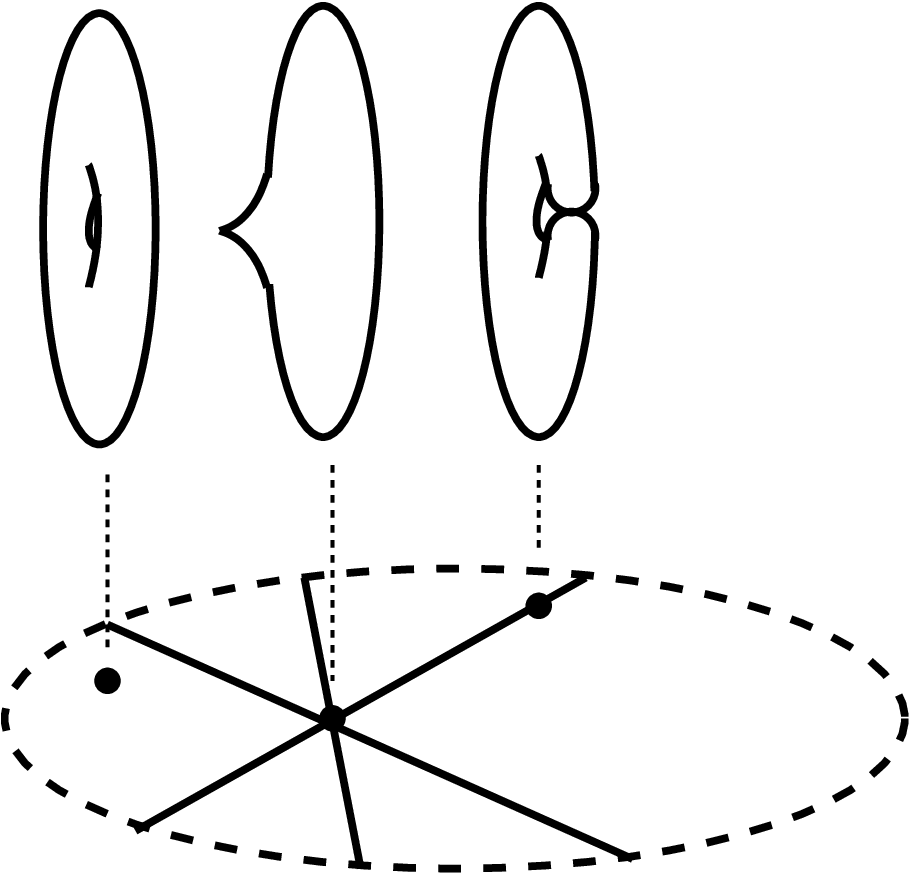}
\caption{The Weyl cover}
\end{figure}

There is still a unique fiber that is cuspidal, but the discriminant has been replaced by a hyperplane arrangement of type $A_2$, given by the equation
$$
\{(a_2-a_3)^2(a_1-a_3)^2(a_1-a_2)^2=0\}.
$$
Set  $B'\to B$ to be the finite (Weyl) cover defined above, and set $X'\to B'$ to be the family obtained by pull-back.  The Weyl group in this case is the group of type $A_2$; i.e.~the permutation group $\Sigma_3$.

\subsubsection{Step 2: a wonderful blow-up} \label{secwbu}
It is a general principle that putting the discriminant locus  into normal crossing position is beneficial (not only is a normal crossing divisor easier to understand, there is also the  Borel Extension Theorem  \cite{borelextension} for abelian varieties and the work of de~Jong--Oort \cite{dejongoort} and Cautis \cite{cautis} for stable curves, all of which will be discussed in more detail in \S \ref{secab} and \S \ref{seccurves}).  

We put the discriminant in this example into nc position  by blowing up the point that is the intersection of its three components.  Explicitly, on one coordinate patch,  we consider the map 
$$
\{1+b_2+b_3=0\} \to \{a_1+a_2+a_3=0\}
$$
$$
(b_1,b_2,b_3)\mapsto (b_1,b_1b_2,b_1b_3).
$$
Pulling the family back by this map gives the new family:
\begin{equation} 
\xymatrix
{ 
\{1+b_2+b_3=x_2^2+(x_1-b_1)\prod_{i=1}^2(x_1-b_1b_i)=0\} 
    \ar@{->}[d] \ar@{->}[r] &\ldots \\
  \{1+b_2+b_3=0\} \ar@{->}[r] &\ldots.
}
\end{equation}

\begin{figure}[htp]\label{figwbu}
\includegraphics[scale=0.25]{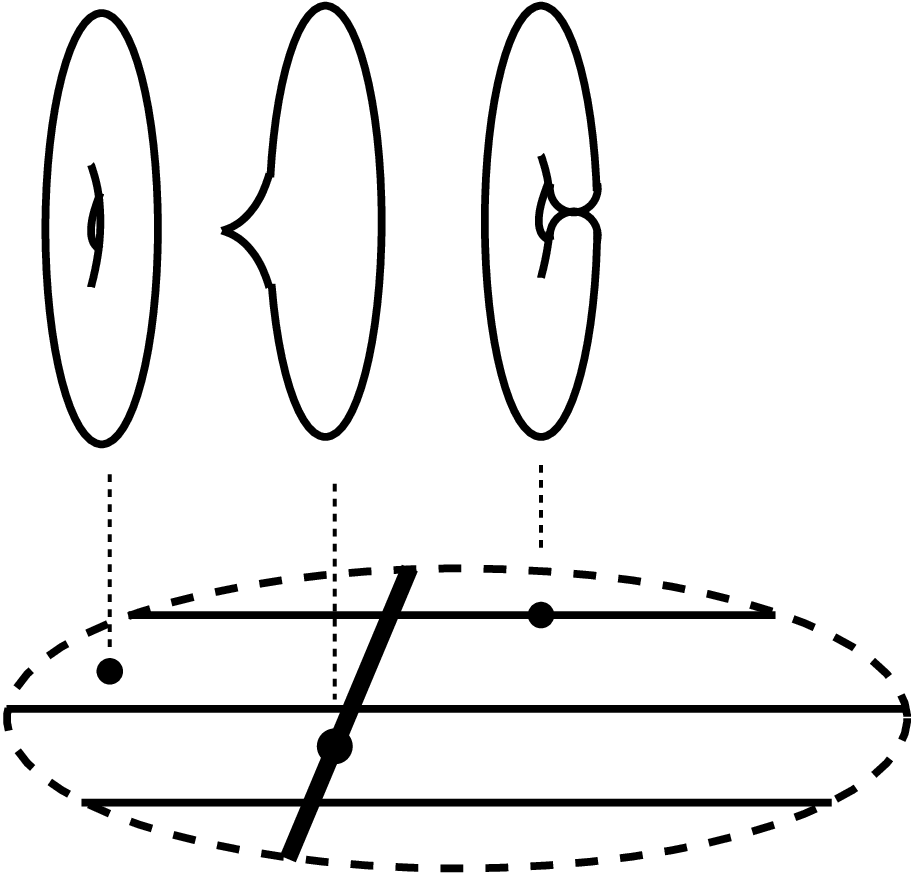}
\caption{The wonderful blow-up}
\end{figure}

 Denote the space obtained by this blow-up as $\tilde B\to B'$.   We call this the wonderful blow-up. Let $\widetilde X\to \widetilde B$ be the family obtained by pull-back.  This restricts to  a family of cuspidal curves over the exceptional curve $\{b_1=0\}$.  Note that the generic point of each irreducible component of the discriminant now parameterizes curves with a unique  singularity of type $A_1$ or $A_2$.  

\begin{rem}
We will see below in \S \ref{secnegex} that over $\tilde B$ we can not replace the cuspidal curves with stable curves.  In other words, there does not exist a morphism extending the rational map $\tilde B \dashrightarrow \overline {\mathcal M}_{1,1}$.  However, we can extend the map to the  moduli scheme (see \S \ref{secsum}); i.e.~there is a morphism extending the rational map $\tilde B\dashrightarrow \overline M_{1,1}$.  
\end{rem}

\subsubsection{Step 3: a double cover}\label{secdc}   In order to obtain a family of stable curves, we will need to take a double cover of the base, branched along the exceptional locus.     The double cover is not possible globally (the exceptional divisor does not admit a square root), so we proceed  locally.  Consider the map
$$
\{1+c_2+c_3=0\} \to \{1+b_2+b_3=0\}
$$
$$
(c_1,c_2,c_3)\mapsto (c_1^2,c_2,c_3).
$$
Pulling the family back by this map gives the new family:
\begin{equation} 
\xymatrix
{ 
\{1+c_2+c_3=x_2^2+(x_1-c_1^2)\prod_{i=1}^2(x_1-c_1^2c_i)=0\} 
    \ar@{->}[d] \ar@{->}[r] &\ldots \\
  \{1+c_2+c_3=0\} \ar@{->}[r] &\ldots.
}
\end{equation}
 Let us denote this finite cover by $\tilde B'\to\tilde B$ and let $\widetilde X'\to\widetilde B'$ be the family obtained by pull-back.

\subsubsection{Step 4: blowing up the cusp locus in the total space}\label{secbucusps} 

There is a family of cuspidal curves lying over the locus $\{c_1=0\}$.    In the total space $\widetilde X'$, the locus of cusps in the fibers is given as $\{c_1=x_1=x_2=0\}$.  Our goal will be to perform a blow-up supported on this locus that will provide a family of semi-stable curves.

To do this, blow-up $\widetilde X'$ along the ideal
$$
I=\left((c_1^2,x_1)^{3}, (c_1^{3},c_1x_1)\cdot(x_2), x_2^2\right).
$$
Let us denote the resulting family as $\operatorname{Bl}_I\widetilde X'\to \widetilde B'$.  
The blow-up replaces the cuspidal curves with nodal curves  consisting of two irreducible components: the desingularization of the cuspidal curve, which is a copy of  $\mathbb P^1$ sitting inside of the blow-up $\operatorname{Bl}_{(x_1^{3},x_2^2)}\mathbb A^2_k$, and a stable elliptic curve sitting inside of the weighted projective space $\mathbb P(1,2,3)$.   We mention here that 
Hassett \cite[\S 6.2]{hassettstable} has determined the tails arising from a much more general class of singularities; we will discuss these results later in \S \ref{secsing}.

In short, we have locally (on the base) constructed an explicit semi-stable reduction of the cuspidal family, which is stable except in the fibers over the locus $\{c_1=0\}$, where it is nodal, but not stable.

\subsubsection{Step 5: the relative dualizing sheaf} \label{secreldual} 
Finally, one can take the relative canonical model (obtained via the relative dualizing sheaf) for the family of  nodal curves.  Concretely, this will contract the extraneous $\mathbb P^1$s in the fibers, giving a family of stable curves.  Let us denote this family by $\widehat {X}\to \widetilde{B}'$.

\subsubsection{Summary} \label{secsum}
We now have a family $\widehat {X}\to \widetilde{B}'$ of stable curves extending the  pull-back of the original family, where $\widetilde{B}'$ is an alteration of $B$.  
\begin{figure}[htp]\label{figwsr}
\includegraphics[scale=0.25]{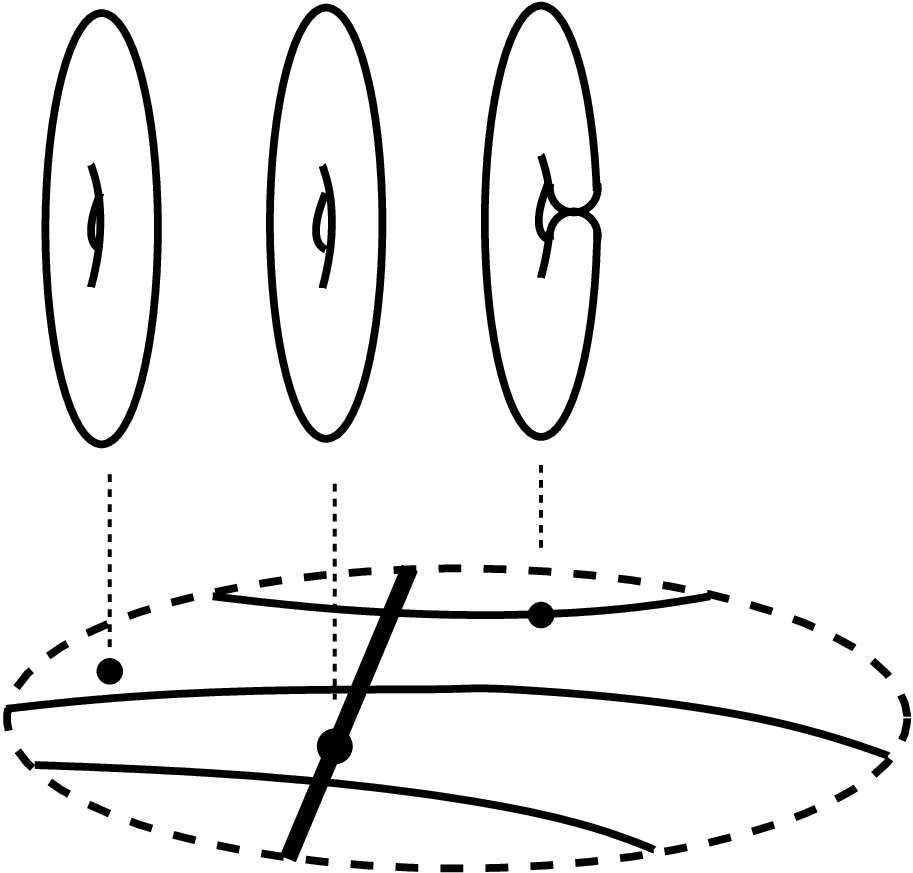}
\caption{The simultaneous stable reduction}
\end{figure}
By definition, we obtain a morphism
$$
\widetilde{B}'\to \overline {\mathcal M}_{1,1} \to \overline M_{1,1}. 
$$
The map $\widetilde B'\to \widetilde B$ is finite, so in fact there is a map $\widetilde B\to \overline M_{1,1}$ (e.g.~\cite[Lem.~2.4]{cautis}).  
The locus $\{c_1=0\}$ can be identified with the $\lambda$-line, with corresponding family given by $y^2=x(x-1)(x-\lambda)$.  The points $\lambda=0,1,\infty$ correspond to the intersections of the strict transforms of the hyperplane arrangement (the discriminant after the Weyl cover).  The restricted map $\{c_1=0\}\to \overline M_{1,1}$ can be identified with the map from the $\lambda$-line to the $j$-line.

\subsection{Obstructions}\label{secnegex}
We have seen that there exists an extension of the rational map $\tilde B\dashrightarrow  \overline{M}_{1,1}$ to the moduli scheme $\tilde B\to \overline{M}_{1,1}$.   We now show the rational map $\tilde B\dashrightarrow  \overline{\mathcal M}_{1,1}$ to the moduli stack does not extend; i.e. ~there is no family of stable curves over $\tilde B$ extending the pull-back of the original family.

We  do this in the following way.  Let $S$ be the spectrum of a DVR with closed point $s$ and generic point $\eta$.   We will find a morphism $S\to \tilde B$ sending $s$ to a closed point of the exceptional divisor (i.e.~$\{c_1=0\}$, parameterizing the cuspidal locus) and sending $\eta$ to the generic point of $\tilde B'$ (i.e.~the smooth locus).  Then we will show that the induced family of curves $X_S\to S$ does not extend to a family of stable curves; i.e.~the composition $S\to \tilde B\dashrightarrow \overline{\mathcal M}_{1,1}$ does not extend to a morphism.

We will show in two ways that the general  $S\to \tilde B\dashrightarrow \overline{\mathcal M}_{1,1}$ as above does not extend to a morphism.  The first is via a monodromy computation.  
   The second method is via a computation following an argument of  Fedorchuk \cite{fedorchuk}.

\subsubsection{The monodromy obstruction} \label{secmonobexa}
 Consider the family $X'\to B'$ obtained via the Weyl cover, and the restriction  $(X')_{\mid L}\to L$ of this family to a generic line $L$ through the origin in $B'$.
To show that there is no extension $\tilde B\to \overline{\mathcal M}_{1,1}$ to the moduli stack, it suffices to show that the restriction $(X')_{\mid L}\to L$ does not extend to a stable family of curves.  To show this,  observe that in the notation of \S \ref{secwbu}, the restriction $(X')_{\mid L}\to L$ is a surface $Z_{b_2}$ with equation (locally near the $A_2$ singularity):
\begin{equation}\label{fam3}
x_2^2+x_1^3-(b_2^2+b_2+1) b_1^2 x_1-b_2(1+b_2) b_1^3 =0,
\end{equation}
where $b_1$ is a parameter for $L$ and $b_2$ is a (generic) fixed slope.
    
  The surface $Z_{b_2}$ has a $D_4$ singularity at the origin.  This is also a cusp singularity for $X_0$, the central fiber of $Z_{b_2}$ viewed as a family of curves.  Recall that the standard resolution of a $D_4$ surface singularity $x^2=f_3(y,z)$ is given by $4$ blow-ups:  First blow-up the $D_4$ singularity.  This gives an exceptional divisor $E_0$.  The $D_4$ singularity ``splits'' into three $A_1$ singularities corresponding to the three roots of $f_3$. Then  blow-up  each $A_1$ singularity.  This  introduces  exceptional divisors $E_1,E_2,E_3$, giving the desired resolution.  We associate to this a  $\widetilde{D}_4$ graph (consisting of $E_0$ the central vertex, to which one attaches edges connecting the $4$ vertices corresponding to the curves $X_0$, $E_1$, $E_2$ and $E_3$).
 
The monodromy obstruction can be identified via the theory of elliptic fibrations. 
From the $\widetilde D_4$ graph, we conclude that this is a type $I_0^*$ degeneration in Kodaira's classification  (see \cite[\S V.7, p.201]{barth}). 
It follows that the monodromy  is  $-\operatorname{Id}$ (see \cite[p.210]{barth}).
We also direct the reader to the discussion of the elliptic involution in \cite[Ch. 2A]{hm}, and  to the monodromy computation made in \S \ref{secmonocusp}.  In conclusion, the monodromy not being unipotent, the family does not extend to a family of stable curves (this fact is reviewed in \S \ref{secab} and \S \ref{seccurves}).

\subsubsection{An obstruction via a direct computation} 
Again, our goal is to show 
 that the general map  $S\to \tilde B\dashrightarrow \overline{\mathcal M}_{1,1}$ with closed point sent to the cuspidal locus, and generic point sent to the smooth locus, does not extend to a morphism.

We follow an observation of Fedorchuk \cite[Prop.~7.4]{fedorchuk}.   Let  $S'$ be the  spectrum of  a DVR, which is a branched double cover of $S$ admitting a morphism  to $\tilde B'$.    Pulling back the family $\widehat{ X}\to \tilde B'$ we obtain a family of stable curves $\widehat { X}_{S'}\to S'$.   Fedorchuk's observation is that it suffices to show that the total space $\widehat { X}_{S'}$ is smooth.  Indeed,  if there were a family of stable curves $\widehat { X}_S\to S$ extending the pull-back of the original family, then it would follow that $\widehat { X}_{S'}$ was equal to $S'\times_S\widehat { X}_{S}$.   But then the total space $\widehat { X}_{S'}$ would have a singular point, giving a contradiction (the point of $\widehat { X}_{S}$ at the node of the central fiber, locally given by $xy-t^n$ with $n\ge 1$, would be replaced with a singular point $xy-t^{2n}$ of $\widehat{ X}_{S'}$).  

Fedorchuk's approach is to construct a particular one-parameter family of genus two curves degenerating to a cusp (his argument implies the result for families of curves of arbitrary genus  \cite[Prop.~7.4]{fedorchuk}).  
Alternatively, with the work we have done here in coordinates, one can show that the blow-up in the fourth step gives a smooth total space when restricted to  the general $S'$.  Since the total space is a smooth surface, and all of the curves blown down in the fifth step are  $(-1)$-curves, this  does not introduce singularities in the total space.


\section{Stable reduction and the valuative criterion for properness} \label{secsrstacks}

We now consider stable reduction more abstractly, in terms of the valuative criterion for properness for stacks.  For readers not familiar with stacks, this is not strictly necessary for the material in the subsequent sections.   The main focus here will be to review the fact that stable reduction is equivalent to the properness of a moduli stack, and that for separated, finite type  Deligne--Mumford stacks, the properness of the coarse moduli space is equivalent to the properness of the stack.
In order to return quickly to a more concrete setting, we postpone a discussion of simultaneous stable reduction in the language of stacks until \S \ref{secssrs}.

\subsection{The valuative criterion for properness of an algebraic stack}  Fix once and for all a scheme $Z$, and consider the \'etale site $\operatorname{Sch}_Z^{\text{\'et}}$ (e.g.~\cite[Exa.~2.3.1, p.27]{fgae}).   By a $Z$-sheaf, (resp.~algebraic $Z$-space, resp.~$Z$-stack), we will mean a sheaf (resp.~algebraic space, resp.~stack) on   $\operatorname{Sch}_Z^{\text{\'et}}$ (e.g.~\cite[Def.~1.1, Def.~3.1]{lmbchampes}).

An \textbf{algebraic (or Artin) $Z$-stack $\mathcal M$ over  $\operatorname{Sch}_Z^{\text{\'et}}$} is a $Z$-stack such that the diagonal $1$-morphism of $Z$-stacks $\Delta: \mathcal M\to \mathcal M \times_Z \mathcal M$ is representable, separated and quasi-compact, and there exists an algebraic $Z$-space $U$ and a $1$-morphism of $Z$-stacks $U\to \mathcal M$ that is surjective and smooth (e.g.~\cite[Def.~4.1]{lmbchampes}).      Note that the diagonal $\Delta$ is in fact of finite type (e.g.~\cite[Lem.~4.2]{lmbchampes}).     
An algebraic $Z$-stack $\mathcal M$ is a \textbf{Deligne--Mumford (DM) $Z$-stack} if there is a 
an algebraic $Z$-space $U'$ and a $1$-morphism of $Z$-stacks $U'\to \mathcal M$ that is surjective and  \'etale  (e.g.~\cite[Def.~4.1]{lmbchampes}).

\begin{exa}
For a concrete example we will consider  
$\overline{\mathcal M}_g$, the DM   $\mathbb C$-stack  of genus $g\ge 2$ curves over $\operatorname{Sch}_{\mathbb C}^{\text{\'et}}$.  This is a category  whose objects are families  $X\to B$ of Deligne--Mumford stable, genus $g$ curves, over a $\mathbb C$-scheme $B$ (see \S \ref{seccurves}), and whose morphisms are given by pull-back diagrams.  Recall that the moduli stack ``represents the moduli problem'' in the following way: for a $\mathbb C$-scheme $B$, a morphism $B\to \overline{\mathcal M}_g$ is equivalent to a family $X\to B$ of stable, genus $g$ curves, over $\mathbb C$.  
\end{exa}

We now state the valuative criteria for separateness and properness.   We refer the reader to \cite[Def.~7.6, Def.~7.11]{lmbchampes} (see also \cite[Def.~4.7, Def.~4.11]{dm}, \cite[Def.~1.1]{vistoli89}) for the respective definitions of separateness and properness, and we note only that separated and finite type are assumed in the definition of properness.

\begin{teo}[Valuative Criterion for Separatedness]   Let $F:\mathcal M\to \mathcal B$ be a $1$-morphism of algebraic $Z$-stacks.  Then $F$ is separated if and only if for every valuation ring $R$,  with field of fractions $K$, 
and every $2$-commutative diagram 
\begin{equation}\label{eqnvcs}
\xymatrix{
& \mathcal M \ar@{->}[d]^F\\
\operatorname{Spec} R \ar@{->}[r] \ar @{-->}^{x_1} @< 2pt> [ru] \ar@{-->}_{x_2} @<-2pt> [ru]& \mathcal B\\
}
\end{equation}
any isomorphism between $x_1|_{\operatorname{Spec}K}$ and $x_2|_{\operatorname{Spec}K}$ can be extended to an isomorphism between $x_1$ and $x_2$.    If moreover $\mathcal B$ is locally noetherian and $F$ is locally of finite type, then one need only consider discrete valuation rings $R$.
\end{teo}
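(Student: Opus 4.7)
The plan is to reduce to the valuative criterion for the diagonal. By definition $F\colon\mathcal M\to\mathcal B$ is separated precisely when the diagonal $\Delta_F\colon \mathcal M\to \mathcal M\times_{\mathcal B}\mathcal M$ is proper; since $\Delta_F$ is automatically representable, separated and quasi-compact, and of finite type, properness amounts to universal closedness. So I would first set up the dictionary between the data in the statement and a lifting problem for $\Delta_F$, and then invoke the valuative criterion for universally closed representable morphisms.

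Concretely, the two $1$-morphisms $x_1,x_2\colon\operatorname{Spec} R\to\mathcal M$, together with the given $2$-commutativity of (\ref{eqnvcs}), assemble into a single $1$-morphism $(x_1,x_2)\colon \operatorname{Spec} R\to \mathcal M\times_{\mathcal B}\mathcal M$. The fiber product
$$
I := \operatorname{Spec} R\,\times_{(x_1,x_2),\,\mathcal M\times_{\mathcal B}\mathcal M,\,\Delta_F}\,\mathcal M
$$
is an algebraic space over $\operatorname{Spec} R$ which represents the isomorphism functor $\underline{\operatorname{Isom}}_{\operatorname{Spec} R}(x_1,x_2)$: a $T$-point of $I$ is exactly a morphism $T\to\operatorname{Spec} R$ together with an isomorphism $x_1|_T\cong x_2|_T$. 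Under this identification, extending an isomorphism over $\operatorname{Spec} K$ to one over $\operatorname{Spec} R$ is the same as extending a $K$-point of $I$ to an $R$-point, i.e.\ lifting $(x_1,x_2)$ across $\Delta_F$ along $\operatorname{Spec} R$.

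With the translation in place the biconditional reduces to the valuative criterion for universal closedness of the representable morphism $\Delta_F$. The forward direction is immediate: if $\Delta_F$ is proper then $I\to\operatorname{Spec} R$ is proper, so every $K$-point of $I$ specializes to (and hence extends to) an $R$-point. For the converse, I would show that the extension hypothesis for all pairs $(x_1,x_2)$ is enough to test universal closedness of $\Delta_F$, by pulling back along a smooth atlas $U\to \mathcal M\times_{\mathcal B}\mathcal M$ and appealing to the classical valuative criterion for universal closedness of morphisms of algebraic spaces. The noetherian addendum then follows from the standard fact that, between locally noetherian algebraic stacks, universal closedness of a morphism locally of finite type is detected on discrete valuation rings only (via approximation of arbitrary valuation rings by DVRs dominating the image of the generic point).

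The main obstacle I expect is the $2$-categorical bookkeeping: one must check that the natural transformation implicit in the $2$-commutativity of (\ref{eqnvcs}) is exactly the datum needed to make $(x_1,x_2)\colon\operatorname{Spec} R\to\mathcal M\times_{\mathcal B}\mathcal M$ well-defined as a $1$-morphism, and that the bijection between isomorphisms $x_1|_T\cong x_2|_T$ and $T$-points of $I$ is genuinely natural in $T$. Once this dictionary is pinned down the remaining content is a formal invocation of the valuative criterion for the representable morphism $\Delta_F$, together with the reduction to DVRs in the locally noetherian, finite-type setting.
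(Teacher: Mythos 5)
Your argument is correct and is exactly the standard one: the paper itself gives no proof of this theorem, deferring to \cite[Prop.~7.8]{lmbchampes}, and your reduction of separatedness to the valuative criterion for (universal closedness of) the representable diagonal $\Delta_F$, via the identification of $\operatorname{Spec}R\times_{\mathcal M\times_{\mathcal B}\mathcal M}\mathcal M$ with the $\operatorname{Isom}$-space of $(x_1,x_2)$ relative to the given isomorphism $F(x_1)\cong F(x_2)$, is precisely the argument in that reference. The $2$-categorical bookkeeping you flag is indeed the only real content to check, and the noetherian/DVR addendum follows as you say from the classical reduction for finite-type morphisms over locally noetherian bases.
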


We direct the reader to \cite[Prop.~7.8]{lmbchampes} (see also \cite[Thm.~4.18]{dm}).

\begin{rem}
The criterion, and in particular the $2$-commutivity,  can be made more explicit as follows. 
For all $x_1,x_2\in \operatorname{ob}\mathcal M_{\operatorname{Spec}R}$, all isomorphsims $\beta:F(x_1)\to F(x_2)$ in $\mathcal B_{\operatorname{Spec}R}$,   and all isomorphisms $\alpha:(x_1)|_{\operatorname{Spec}K}\to (x_1)|_{\operatorname{Spec}K}$ in $\mathcal M_{\operatorname{Spec}K}$ such that $F(\alpha)=\beta|_{\operatorname{Spec}K}$, there exists at least one (and in fact a single) isomorphism $\tilde \alpha:x_1\to x_2$ in $\mathcal M_{\operatorname{Spec}R}$ extending $\alpha$ (i.e.~$\tilde \alpha|_{\operatorname{Spec}K}=\alpha$) and such that $F(\tilde \alpha)=\beta$.   We also note that it suffices to take valuation rings that are complete, and have algebraically closed residue field (see \cite[Prop.~7.8]{lmbchampes}).
\end{rem}

\begin{teo} [Valuative Criterion for Properness]  
  Let $F:\mathcal M\to \mathcal B$ be a separated, finite type $1$-morphism of algebraic $Z$-stacks.  Then $F$ is proper if and only if for every discrete valuation ring $R$,  with field of fractions $K$, 
and every $2$-commutative diagram 
$$
\xymatrix{
\operatorname{Spec}K  \ar@{->}[r]  \ar@{->}[d]& \mathcal M \ar@{->}[d]^F\\
\operatorname{Spec} R \ar@{->}[r] & \mathcal B\\
}
$$
there exists a finite extension $K'$ of $K$, so that taking $R'$ to be the integral closure of $R$ in $K'$, there is  a $2$-commutative diagram
\begin{equation}\label{eqnvcp}
\xymatrix{
\operatorname{Spec}K' \ar@{->}[r]  \ar@{->}[d]& \operatorname{Spec}K  \ar@{->}[r]  \ar@{->}[d]& \mathcal M \ar@{->}[d]^F\\
\operatorname{Spec}R'  \ar@{->}[r]  \ar@{-->}[rru]& 
\operatorname{Spec} R \ar@{->}[r] & \mathcal B\\
}
\end{equation}
extending the original diagram.
\end{teo}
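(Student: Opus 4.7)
The plan is to reduce both directions to the classical valuative criterion of properness applied to a smooth atlas of $\mathcal{M}$. Since $F$ is already assumed separated and of finite type, and properness further means universal closedness, the task amounts to showing that universal closedness of $F$ is equivalent to the stated lifting property (up to finite base change).

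For the ``only if'' direction, I would start from a $2$-commutative diagram as in \eqref{eqnvcp} and replace $F$ by its base change $F_R: \mathcal{M}_R := \mathcal{M} \times_\mathcal{B} \operatorname{Spec} R \to \operatorname{Spec} R$, which remains proper. Writing $\xi: \operatorname{Spec} K \to \mathcal{M}_R$ for the induced $K$-point, universal closedness of $F_R$ forces the reduced closure of the image of $\xi$ in $|\mathcal{M}_R|$ to surject onto $\operatorname{Spec} R$; in particular its fiber over the closed point $s$ is nonempty. I would then pick a smooth chart $V \to \mathcal{M}_R$ by a finite type $R$-scheme, and use smoothness of $V \to \mathcal{M}_R$ to lift $\xi$, after a finite separable extension $K_0/K$, to a $K_0$-point $\tilde\xi$ of $V$ whose scheme-theoretic closure $W \subseteq V$ meets the fiber above $s$. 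Applying the classical valuative criterion to the separated finite type $R$-scheme $W$ then produces a finite extension $K'/K_0$, the integral closure $R'$ of $R$ in $K'$, and a morphism $\operatorname{Spec} R' \to W \subseteq V$ extending $\tilde\xi$ and lying over $\operatorname{Spec} R' \to \operatorname{Spec} R$. Composition with $V \to \mathcal{M}_R \to \mathcal{M}$ yields the desired dashed arrow.

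For the ``if'' direction, I would assume the lifting property and prove $F$ is universally closed. It suffices to show that for every morphism $T \to \mathcal{B}$ from a Noetherian scheme $T$ and every closed substack $Z \subseteq \mathcal{M}_T := \mathcal{M} \times_\mathcal{B} T$, the image $F_T(|Z|)$ is closed in $|T|$. Given $t \in |T|$ in the closure of $F_T(|Z|)$, I would choose a specialization $t' \rightsquigarrow t$ in $T$ with $t' \in F_T(|Z|)$, realize it by a morphism $\operatorname{Spec} R \to T$ from a DVR sending $\eta \mapsto t'$ and $s \mapsto t$, and lift $t'$ to a $K$-point of $Z$ (after extending the residue field at $t'$ if necessary). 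The hypothesis then supplies, for some finite extension $K'/K$ with integral closure $R'$, a morphism $\operatorname{Spec} R' \to \mathcal{M}_T$ whose generic point maps into $Z$; since $Z$ is closed, the closed point of $\operatorname{Spec} R'$ maps into $Z$ as well, and its image in $T$ is $t$, so $t \in F_T(|Z|)$, as required.

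The main obstacle is mediating between the stack $\mathcal{M}$ and a scheme-theoretic atlas $V \to \mathcal{M}$: because this atlas is only smooth, lifting a $K$-point of $\mathcal{M}$ to $V$ typically requires a finite separable extension of $K$. This is the structural reason that the finite base change $K'/K$ in \eqref{eqnvcp} cannot in general be avoided, and coordinating the two extensions---one needed to trivialize the atlas at the generic point and one arising from the classical valuative criterion applied to $V$---is the most delicate book-keeping step, handled cleanly in the setup of \cite[Prop.~7.12]{lmbchampes} after reducing to complete DVRs with algebraically closed residue field.
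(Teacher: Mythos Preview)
The paper does not give a proof of this theorem; immediately after the statement it simply writes ``We direct the reader to \cite[Thm.~7.10]{lmbchampes} (see also \cite[Thm.~4.19]{dm}).'' Your sketch follows the standard argument found in those references---base change to $\operatorname{Spec} R$, pass to a smooth atlas, and invoke the classical valuative criterion for schemes---so there is nothing in the paper to compare it against beyond noting that your approach is consistent with the cited sources. In that sense you have supplied more than the paper does.
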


We direct the reader to \cite[Thm.~7.10]{lmbchampes} (see also \cite[Thm.~4.19]{dm}).

\begin{rem}
It suffices to consider DVRs that are complete, and have algebraically closed residue field (see \cite[Thm.~7.10]{lmbchampes}).  If one removes the hypothesis that $F$ be separated, then the criterion \eqref{eqnvcp} is equivalent to $F$ being universally closed (e.g.~\cite[Thm~7.10]{lmbchampes}).
\end{rem}

\begin{exa}
Let us make these criteria more concrete in  the example of $\overline{\mathcal M}_g$, $g\ge 2$.   We will use the fact that $\overline {\mathcal M}_g$ is of finite type over $\mathbb C$ \cite[Thm.~5.2]{dm} together with the fact that $\mathbb C$ is noetherian to conclude that we need only  consider DVRs.  Then  using the fact that a morphism from a scheme to $\overline{\mathcal M}_g$  is the same as family of stable curves over the scheme, we may  reinterpret the valuative criteria as follows.  

\emph{Separatedness}:  Let $R$ be a DVR with fraction field $K$.  Set $S=\operatorname{Spec} R$.  Suppose $X\to S$ (resp.~$Y\to S$) is  a family of stable, genus $g$ curves over $S$, with restriction $X_K\to \operatorname{Spec}K$ (resp.~$Y_K\to \operatorname{Spec}K$).  Then the valuative criterion for separateness requires that any  $K$-isomorphism $X_K\to Y_K$ extend to an $S$-isomorphism of $X/S\to Y/S$.  

\emph{Properness}:  Let $R$ be a DVR with fraction field $K$.  Let  $X_K\to \operatorname{Spec}K$ be a family of stable, genus $g$ curves.  Then the  valuative criterion for properness requires that  there  exist a finite extension $K'$ of $K$ such that setting $R'$ to be the integral closure of $R$ in $K'$, there exists a family of stable curves  $X'\to \operatorname{Spec}R'$ extending the family $X_{K'}=X_K\times_{\operatorname{Spec}K} \operatorname{Spec}K'\to \operatorname{Spec}K'$ (in the sense that $X'|_{K'}\cong X_{K'}$).

In conclusion, the properness of $\overline{\mathcal M}_g$ is equivalent to the fact that any family of stable curves over the generic point of a DVR can be extended, possibly after a generically finite base change, to a family of stable curves, and this extension is unique up to isomorphism.   This is exactly the statement of the Deligne--Mumford stable reduction theorem, which we will review in \S \ref{seccurves}.  
\end{exa}

\begin{rem}    In practice,  a natural moduli problem (over a scheme $Z$) will often lead to a separated, non-proper,  algebraic $Z$-stack $\mathcal M$ of finite type over $Z$.   
A stable reduction theorem for the moduli problem then consists of finding a proper algebraic $Z$-stack $\overline {\mathcal M}$, containing $\mathcal M$ (ideally as a dense  open substack).  In general, finding such stacks has proven to be quite difficult.  One approach is to use GIT to determine a (GIT-)stability condition, with the hope that the stable objects will provide the correct class to define $\overline{\mathcal M}$; we discuss this further in \S \ref{secgit}.    Another approach, due to Koll\'ar--Shepherd-Barron--Alexeev, which uses the Minimal Model Program (MMP), has had a great deal of success lately;  we discuss this  further in \S\ref{secsrhd}.
\end{rem}

\begin{rem}
 It may also happen that there  are many such proper stacks $\overline {\mathcal M}$ from which to choose.
Following Smyth (e.g.~\cite{smyth09}), who has investigated this  question extensively for the moduli of curves,   
a useful way to frame the problem is as follows.
By considering all ``degenerations'' of objects in $\mathcal M$, one may obtain a ``highly non-separated'' algebraic $Z$-stack $\mathcal U$, which contains $\mathcal M$ as an open substack.  Essentially by construction, the stack $\mathcal U$ should satisfy the valuative criterion \eqref{eqnvcp}.   One is then in the situation of identifying proper substacks $\overline {\mathcal M}$ of $\mathcal U$ that contain $\mathcal M$.  We direct the reader to Smyth \cite{smyth09} for more on this, especially for the case of curves 
(see also \cite{asv10},\cite{as12} where a notion weaker than properness is considered).
\end{rem}

\subsection{Moduli spaces}
A moduli space for a stack is an algebraic space  or scheme that is as close as possible to the stack.  More precisely, a \textbf{categorical moduli space} for an algebraic $Z$-stack $\mathcal M$ is a $Z$-morphism $\pi:\mathcal M\to M$ to an algebraic $Z$-space such that $\pi$ is initial for $Z$-morphisms to algebraic $Z$-spaces.  This means that  
 given any $Z$-morphism $\Phi:\mathcal M\to Y$ to an algebraic $Z$-space, there is a unique $Z$-morphism $\eta:M \to Y$ making the following diagram commute
$$
\xymatrix{
\mathcal M \ar@{->}[r]^\Phi \ar@{->}[d]_\pi& Y\\
M \ar@{-->}[ru]_{\exists ! \eta} & \\
}
$$
We will call $M$ a \textbf{categorical moduli scheme} if $M$ is a $Z$-scheme.

A \textbf{coarse moduli space (resp.~scheme)}  is a categorical moduli space (resp. scheme) satisfying the additional condition that for every algebraically closed field $k$, the induced map $|\mathcal M(k)|\to M(k)$ is a bijection, where $|\mathcal M(k)|$ is the set of isomorphism classes of the groupoid $\mathcal M_{\operatorname{Spec}k}$. 
For stacks with finite inertia there is the following theorem of Keel--Mori.   
Recall that for an algebraic $Z$-stack $\mathcal M$, the inertia stack $I_Z(\mathcal M)$ is the fiber product $\mathcal M\times _{\mathcal M\times_Z\mathcal M}\mathcal M$, where both morphisms $\mathcal M\to \mathcal M\times_Z\mathcal M$ are the diagonal.   An algebraic $Z$-stack is said to have finite inertia if $I_Z(\mathcal M)$ is finite over $\mathcal M$.   Note that by pull-back, a stack with finite diagonal, and hence a separated, finite type  DM stack (e.g. \cite[Lem.~1.13]{vistoli89}),  has finite inertia.  

\begin{teo}[Keel--Mori {\cite[Cor.~1.3]{keelmori97}}]  Let $\mathcal M$ be an algebraic $Z$-stack, locally of finite presentation, with finite innertia. Then there exists a coarse moduli space $\pi:\mathcal M\to M$, with $\pi$ proper.  If  $\mathcal M/Z$ is separated, then $M/Z$  is separated.  If $Z$ is locally notherian, then $M/Z$ is locally of finite type.  If $Z$ is locally noetherian and $\mathcal M/Z$ is of finite type with finite diagonal, then $\mathcal M/Z$ is proper if and only if $M/Z$ is proper.  
\end{teo}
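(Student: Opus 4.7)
The plan is to construct the coarse moduli space $M$ étale-locally on a would-be $M$ and then glue. Fix a smooth presentation $U \to \mathcal{M}$ by an algebraic $Z$-space, and consider the associated groupoid $R = U \times_{\mathcal{M}} U \rightrightarrows U$. The inertia $I_Z(\mathcal{M})$ pulled back to $U$ is the stabilizer group space $\operatorname{Stab} \to U$, sitting inside $R$ above the diagonal of $U \times_Z U$; by hypothesis it is finite over $U$. The task is to construct the quotient of the groupoid $R \rightrightarrows U$ as an algebraic $Z$-space.

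The main step is a local structure theorem: for every geometric point $\bar x : \operatorname{Spec} k \to \mathcal{M}$ with automorphism group $G$, there is an étale neighborhood $V \to U$ of a lift of $\bar x$ such that the restricted groupoid $R|_V \rightrightarrows V$ is isomorphic to the transformation groupoid $G \times V \rightrightarrows V$ for an action of the finite group $G$ on $V$. The strategy is first to pass to the strict henselization of $U$ at $\bar x$ to trivialize $\operatorname{Stab}$ as the constant group $G$, then to use the finiteness of $R \to U \times_Z U$ near the diagonal (a consequence of finite inertia together with $R$ being unramified along the identity section of the stabilizer) to produce, by a Luna-type slice together with Artin approximation, an honest étale neighborhood on which the groupoid has this product form. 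This reduction is the technical heart of the argument and the main obstacle; it is the content of Keel--Mori's key lemma.

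Given such a $V$, the quotient $V/G$ exists as an algebraic $Z$-space (finite group quotients of affine schemes exist as invariants, and these glue and descend), and by construction it is a coarse moduli space for the open substack $[V/G] \subseteq \mathcal{M}$ cut out by $V$. The various $V/G$ glue along étale overlaps by the universal property, yielding a single algebraic $Z$-space $M$ and a morphism $\pi : \mathcal{M} \to M$ whose universal property and bijectivity on $k$-points are checked étale-locally. Properness of $\pi$ is étale-local on $M$, where $\pi$ looks like $[V/G] \to V/G$, which is proper.

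For the remaining assertions: if $\mathcal{M}/Z$ is separated then the diagonal of $\mathcal{M}$ is proper, whence one deduces that the diagonal $M \to M \times_Z M$ is proper and a monomorphism, hence a closed immersion, so $M/Z$ is separated. When $Z$ is locally noetherian, choosing $U$ locally of finite type makes each $V/G$ locally of finite type over $Z$, and hence so is $M$. Finally, since $\pi$ is proper and surjective, properness descends in both directions: if $\mathcal{M}/Z$ is proper then $M/Z$ is universally closed (as the image of a proper map), separated by the above, and of finite type because $\mathcal{M}$ is; conversely, if $M/Z$ is proper then $\mathcal{M}/Z$ is proper as the composition of the proper morphism $\pi$ with the proper morphism $M \to Z$.
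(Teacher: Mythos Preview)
The paper does not give its own proof of this theorem; immediately after the statement it simply directs the reader to Keel--Mori, Conrad, and Olsson. So there is no in-paper argument to compare your sketch against.

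Your outline is recognizably the Keel--Mori strategy, and the deductions in your final paragraph are fine (note that for the backward direction you are implicitly using that finite diagonal implies $\mathcal{M}/Z$ is separated, which is correct). However, the local structure step as you have stated it does not cover the full generality of the theorem. You claim that after passing to the strict henselization one can ``trivialize $\operatorname{Stab}$ as the constant group $G$'' and then find an \'etale $V$ on which the groupoid is $G \times V \rightrightarrows V$ for a finite \emph{discrete} group $G$. That is essentially the assertion that $\mathcal{M}$ is \'etale-locally a quotient stack by a finite constant group, which is the Deligne--Mumford case. The theorem is stated for algebraic (Artin) $Z$-stacks with finite inertia, where the stabilizer at a geometric point can be a non-\'etale finite group scheme (for instance $\mu_p$ or $\alpha_p$ in characteristic $p$); strict henselization does not make such a group scheme constant, and there is no \'etale neighborhood on which the groupoid has the product form you assert.

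The genuine Keel--Mori argument (see Conrad's exposition) does not reduce to a finite constant group action. One works with a finite locally free groupoid and produces, \'etale-locally on the base, a scheme quotient by showing that a suitable slice through a point has finite flat source and target maps whose pushforwards of $\mathcal{O}$ have equal rank; the coarse space is then built from rings of invariants for this finite flat groupoid, and the gluing uses that the resulting map is a universal geometric and categorical quotient. Your sketch is an accurate account of the proof in the DM case, but you should flag that the Artin case requires replacing ``finite group $G$ acting on $V$'' by ``finite locally free groupoid over $V$'' throughout, and that the existence of the \'etale slice in that generality is the nontrivial content of Keel--Mori's key lemma rather than a consequence of henselization plus Artin approximation alone.
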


For a proof, we direct the reader to Keel--Mori \cite{keelmori97} (see also Conrad \cite[Thm.~1.1]{conradKM} and Olsson \cite[Rem.~1.4.4]{olsson}).  
We also direct the reader to the definition of a tame stack in  Abramovich--Olsson--Vistoli \cite[Def.~3.1]{AOV08}.

\begin{rem}\label{remprojmodspace}
As a consequence of the theorem, one can prove a stable reduction theorem for a moduli problem (with a reasonable moduli stack) by showing that the moduli stack admits a proper moduli space.  One standard approach to constructing a proper moduli space is via GIT (\S \ref{secgit}), where one will in fact typically obtain the stronger  statement that the moduli space is projective.   
Note that alternatively, for a proper moduli space, one can use positivity results of Koll\'ar \cite{kollar90} to establish the projectivity of the moduli space directly.
\end{rem}

\begin{exa}
As $\overline{\mathcal M}_g$ is a proper DM $\mathbb C$-stack, the Keel--Mori theorem implies there is  proper coarse moduli space $\overline{\mathcal M}_g\to \overline M_g$.   In fact, the moduli space is a projective variety over $\mathbb C$.  In \S \ref{secgit}, we will sketch a GIT construction of $\overline M_g$ due to Gieseker \cite{gieseker}.  By the  valuative criteria, the existence of a projective coarse moduli space provides another proof of stable reduction.   Note also, that as an application of the techniques in \cite{kollar90}, Koll\'ar gives an independent proof that $\overline M_g$  is projective (\cite[Thm.~5.1]{kollar90}).
\end{exa}

For more on algebraic stacks with positive dimensional stabilizers, the reader is directed to Alper \cite{alpergms}.  See especially the definition \cite[Def.~4.1]{alpergms} of a good moduli space.  We point out that (under mild hypotheses) a good moduli space is a categorical moduli space (\cite[Thm.~6.6, Thm.~4.16(vi)]{alpergms}).  If  $\pi:\mathcal M\to M$ is a good moduli space, then $\pi$ may fail to be separated, but it does satisfy the valuative criterion \eqref{eqnvcp}; i.e.~it is universally closed (\cite[Thm.~4.16(ii)]{alpergms}, see also \cite[Prop.~2.17]{asv10}).  
An illustrative example is the morphism  $\pi:[\mathbb A_k^1/\mathbb G_m]\to \operatorname{Spec}k$, for an algebraically closed field $k$ (see e.g.~\cite[Exa.~8.6]{alpergms}).    This is a good (categorical) moduli space, which is not coarse, and such that $\pi$ is universally closed, but not separated.


\section{Semi-stable reduction} \label{secssr}

Semi-stable reduction is the process of filling in the central fiber of a family of smooth varieties over the punctured disk with  a reduced scheme with normal crossing singularities.  This is the natural generalization of filling in  the central fiber in a family of smooth curves with a nodal curve.   A semi-stable reduction provides a simple special fiber, which is useful from many points of view, such as Hodge theory, period maps, and monodromy.   On the other hand, unlike a stable reduction, a semi-stable reduction is not unique.  

In this section we discuss a theorem of Mumford et al.~\cite{mumetal} that establishes the existence of semi-stable reductions in characteristic zero.  In the next section, we will discuss the connection with monodromy, which plays a central role in the stable reduction theorem for abelian varieties.  In \S \ref{seckol} we will use the semi-stable reduction theorem in discussing an approach of Koll\'ar--Shepherd-Barron--Alexeev  to establishing stable reduction theorems.

\subsection{Semi-stable Reduction Theorem}

We begin by stating the semi-stable reduction theorem of  Kempf--Knudsen--Mumford--Saint-Donat \cite{mumetal}.

\begin{teo}[Semi-stable Reduction Theorem {\cite[Thm.~p.53]{mumetal}}]\label{teossr}   Assume \\ that $\operatorname{char}(k)=0$ and $k=\overline k$.   Let $B$ be on open subset of a non-singular curve over $k$, fix a point $o\in B$, and set $U=B-\{o\}$.  Suppose that 
$$
\pi:X\to B
$$
is a surjective morphism of a variety $X$ onto $B$ such that the restriction 
$
\pi_U:X|_U\to U
$
is smooth.  Then there is a finite base change $f:B'\to B$, with $B'$ non-singular and $f^{-1}(o)$ a single point $o'$, a non-singular variety $X'$ and a diagram
\begin{equation}\label{eqnsrdiag}
\xymatrix
{
 X'\ar@{->}[r]_{p \ \ \ } \ar@{->}[rd]_{\pi'} \ar@{->}@/^1pc/[rr]& B'\times_B X \ar@{->}[r]\ar@{->}[d]& X\ar@{->}[d]^\pi\\
 & B'\ar@{->}[r]^f & B 
}
\end{equation}
satisfying the properties below.
\begin{enumerate}
\item Setting $U'=B'-\{o'\}$, $p$ is an isomorphism over $U'$.
\item $(\pi')^{-1}(o')$ is a reduced scheme, which is an snc divisor on $X'$.
\item The morphism $p$ is projective, and given as a blow-up of an ideal sheaf $\mathcal I$ that is trivial away from the fiber over $o'$.
\end{enumerate}
\end{teo}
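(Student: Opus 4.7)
The plan is to follow the classical strategy: reduce to the case of a simple normal crossings central fiber with multiplicities by resolution of singularities, take a cyclic base change that trivializes these multiplicities, and then resolve the resulting singularities, which turn out to be toric, in a combinatorial way.

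\emph{Step 1: reduction to snc with non-reduced fiber.} First I would apply Hironaka's resolution of singularities (available in characteristic zero) to $X$, and then resolve the support of $\pi^{-1}(o)$ to obtain a modification $\widetilde X \to X$, isomorphic over $U$, with $\widetilde X$ non-singular and $\widetilde\pi^{-1}(o)_{\mathrm{red}}$ an snc divisor. Since the generic fiber is already smooth, the ideal blown up can be chosen to be supported over $o$, so $\widetilde X \to X$ is a projective modification that is an isomorphism away from $\pi^{-1}(o)$. After this reduction we may assume $X$ is smooth and $\pi^{-1}(o)=\sum_i n_i E_i$ with the $E_i$ smooth and crossing transversally, and only the multiplicities $n_i$ obstruct reducedness.

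\emph{Step 2: cyclic base change.} Let $n=\operatorname{lcm}(n_i)$, and let $f:B'\to B$ be the totally ramified cover of degree $n$ over $o$ (after possibly shrinking $B$ to a formal neighborhood and normalizing globally via Kummer theory). In local étale coordinates on $X$ at a point where $k$ of the $E_i$ meet, $\pi$ has the form $t=u_1^{n_1}\cdots u_k^{n_k}$, so the fibered product $Y:=B'\times_B X$ is cut out locally by $s^n=u_1^{n_1}\cdots u_k^{n_k}$. These are \emph{toric} hypersurface singularities; $Y$ is a toroidal embedding whose combinatorial data are the cones $\sigma_p\subset \mathbb R^k$ dual to the monoids of monomials $u_1^{a_1}\cdots u_k^{a_k}s^b$ compatible with the equation. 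The central fiber of $Y\to B'$ is the toroidal boundary, but it still carries multiplicities along each stratum.

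\emph{Step 3: toroidal resolution.} The heart of the proof, and the main technical obstacle, is to choose a refinement of the fan of $Y$ that simultaneously (a) resolves singularities, and (b) makes the pullback of $o'$ a reduced snc divisor. This is done using the combinatorial lemma of Kempf--Knudsen--Mumford--Saint-Donat: for any toroidal embedding, there exists a projective regular subdivision of its conical polyhedral complex such that the order of vanishing along each new ray of the defining equation of the special divisor equals one. Concretely, one first subdivides each $\sigma_p$ so that the linear form given by $s$ attains value $1$ on each primitive generator (this kills the multiplicities), and then further subdivides so that every cone is regular (this kills the singularities). The resulting toroidal modification $X'\to Y$ is projective and covered by toric blow-ups, hence equals the blow-up of an ideal sheaf $\mathcal I$ supported on the central fiber. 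Since $Y\to B'\times_B X$ is already an isomorphism away from $\pi^{-1}(o')$, the composition $X'\to B'\times_B X$ is a projective blow-up with center over $o'$, $X'$ is non-singular, and $(\pi')^{-1}(o')$ is reduced and snc, giving the required diagram \eqref{eqnsrdiag}. The main obstacle, as noted, is producing the simultaneous regular-and-reduction subdivision; this is a purely combinatorial statement about simplicial cones which one proves by induction on dimension, interleaving barycentric subdivisions with unimodular subdivisions à la de Concini--Procesi.
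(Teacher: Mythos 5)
Your outline follows the same route as the paper's sketch of the Kempf--Knudsen--Mumford--Saint-Donat argument (log resolution, base change by the lcm of the multiplicities, toroidal resolution), but two points need repair. The smaller one: the fibered product $Y=B'\times_B X$, cut out locally by $s^n=u_1^{n_1}\cdots u_k^{n_k}$, is not normal, so it is not literally a toroidal embedding; one must first normalize the total space. This is not cosmetic --- the content of \cite[Lem.~1, p.102, Lem.~2, p.103]{mumetal}, as recalled in the paper, is precisely that after normalization the central fiber is already \emph{reduced} and the pair is a toroidal embedding without self-intersection. So your assertion that the fiber of $Y\to B'$ ``still carries multiplicities along each stratum'' has the division of labor backwards: after the lcm base change and normalization only the singularities of the total space remain to be removed, and the issue is to remove them \emph{without reintroducing} multiplicities.

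The more serious point is that the combinatorial lemma you invoke in Step 3 is stated too strongly and, as stated, is false. You claim that any toroidal embedding admits a projective regular subdivision all of whose new primitive ray generators lie on the level set $\{\ell=1\}$ of the order function of $t$. Passing to the cross-sections $\sigma\cap\{\ell=1\}$, this amounts to asserting that every lattice polytope admits a unimodular triangulation on its lattice points, which already fails in dimension $3$ (Reeve's tetrahedra). What Knudsen--Mumford--Waterman actually prove is that some \emph{dilation} $mP$ admits such a triangulation; geometrically the dilation is a further cyclic base change of degree $m$, which must then be absorbed into $f:B'\to B$ or performed iteratively. The paper flags exactly this step: ``a further base change may be required.'' Without it, a regular subdivision will in general introduce rays with $\ell>1$, i.e., non-reduced components of the special fiber, and your argument does not close. (A minor attribution slip: the combinatorial result is from \cite{mumetal} itself, not de Concini--Procesi.)
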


This result is used so frequently in stable reduction arguments, and parts of the  proof are so constructive, that it is worthwhile to sketch the outline here.  One of the key points is the following example.

\begin{exa}
Consider the variety $X$ in $\operatorname{Spec}k[\underline x,t]=\mathbb A^{r+1}_k$ defined by 
$$
t-x_1^{a_1}\cdots x_r^{a_r}.
$$
We view $X$ as a family $\pi:X\to B:=\operatorname{Spec}k[t]$, with central fiber $D=\pi^{-1}(0)$.     For each $d\in  \mathbb N$, set $B_d=\operatorname{Spec}k[t]$, and $f_d:B_d\to B$ to be the map given by $t\mapsto t^d$.  
We define $X_d$ to be the normalization of the pull-back of $X$ via the map $B_d\to B$.  In other words, we have a diagram
$$
\xymatrix
{
 X_d\ar@{->}[r]_{\nu_d \ \ \ } \ar@{->}[rd]_{\pi_d} \ar@{->}@/^1pc/[rr]& B_d\times_B X \ar@{->}[r]\ar@{->}[d]& X\ar@{->}[d]\\
 & B_d\ar@{->}[r]^{f_d} & B 
}
$$
\emph{In this example, we will assume that  
$$
d=\operatorname{lcm}(a_1,\ldots,a_r) \ \ \text{ and } \ \ \operatorname{gcd}(d,a_1,\ldots,a_r)=\operatorname{gcd}(a_1,\ldots,a_r)=1,
$$
and we will describe $X_d$ and $\pi_d^{-1}(0)$}.  
First,  $X_{B_d}:=B_d\times_B X$  is defined by
$$
t^d-x_1^{a_1}\cdots x_r^{a_r}.
$$
The assumption 
$\operatorname{gcd}(d,a_1,\ldots,a_r)=\operatorname{gcd}(a_1,\ldots,a_r)=1$ implies that $X_{B_d}$ is the image of the morphism
\begin{equation}\label{eqnmummap}
\operatorname{Spec}k[\underline y]=\mathbb A^r_k\to \mathbb A^{r+1}_k=\operatorname{Spec}k[\underline x,t]
\end{equation}
given by $
(y_1,\ldots,y_r)\mapsto (y_1^d,\ldots,y_r^d,y_1^{a_1}\cdots y_r^{a_r})$.  The  map  \eqref{eqnmummap} factors as 
$$
\mathbb A^r_k\to \mathbb A^r_k=\operatorname{Spec}k[\underline z]\to \mathbb A^{r+1}_k
$$
where the first map is given by $
(y_1,\ldots,y_r)\mapsto (y_1^{a_1},\ldots,y_r^{a_r})$ and the second map is given by $(z_1,\ldots,z_r)\mapsto (z_1^{d/a_1},\ldots,z_r^{d/a_r},z_1\cdots z_r)$.   In short we have 
$$
\operatorname{Spec}k[\underline y]\to \operatorname{Spec}k[\underline z]\to X_{B_d}= \operatorname{Spec}\left(k[\underline x,t]/(t^d-x_1^{a_1}\cdots x_r^{a_r}) \right)
$$
and the associated morphisms of rings are the inclusions:
\begin{equation}\label{eqnre}
k[y_1,\ldots,y_r]\supseteq k[y_1^{a_1},\ldots,y_r^{a_r}] \supseteq k[y_1^d,\ldots,y_r^d, y_1^{a_1}\cdots y_r^{a_r}].
\end{equation}

Let us consider for a moment  the special case where  $\operatorname{Spec}k[\underline z]\to X_{B_d}$ is birational.   This will be the case, for instance,  if either $r=2$, or, more generally, if $a_3=\cdots=a_{r}=a_1a_2$ (but will fail in general; e.g.~the case $X=V(t-x_1^2x_2x_3)$).  
Then it follows from Zariski's main theorem that $$\operatorname{Spec}k[\underline z]\to X_{B_d}$$  is  the  normalization $\nu_d:X_d\to X_{B_d}$. 
The divisor $D$ corresponds to $(t)$ in $k[\underline x,t]/(t^d-x_1^{a_1}\cdots x_r^{a_r})$, which corresponds to $z_1\cdots z_r$ in $k[\underline z]$.   In  conclusion, in this special case, $X_d$ is smooth and $\pi_d^{-1}(0)$ is a reduced, nc divisor.
 
In general, describing the normalization $\nu_d:X_d\to X_{B_d}$ is more complicated.  From the ring on the right in \eqref{eqnre}, one readily obtains a toric description of $X_{B_d}$.  The normalization can then be described in terms of associated semi-groups (see \cite[p.101]{mumetal}).  Using this approach, it is established in \cite[Lem.~1, p.102, Lem.~2, p.103]{mumetal} that $\pi_d^{-1}(0)$ is reduced, and the pair $X_d$ and $\pi^{-1}(0)$ give rise to a toroidal embedding without self-intersection.    We discuss toroidal embeddings briefly in \S  \ref{secak}.  In the case where $X_d$ is non-singular, we point out that this implies that $\pi_d^{-1}(0)$ is nc.
\end{exa}

\begin{exa}
Again consider the variety $X$ in $\operatorname{Spec}k[\underline x,t]$ defined by 
$
t-x_1^{a_1}\cdots x_r^{a_r}
$.  
Using the same notation as in the previous example, \emph{we will keep the assumption that $d=\operatorname{lcm}(a_1,\ldots,a_r)$, but will discard the assumption that $\operatorname{gcd}(d,a_1,\ldots,a_r)=\operatorname{gcd}(a_1,\ldots,a_r)=1$}.
The fibered product  $X_{B_d}:=B_d\times_B X$ is defined by
$
t^d-x_1^{a_1}\cdots x_r^{a_r}
$.  
Setting $e=\operatorname{gcd}(d,a_1,\ldots,a_r)$, this family decomposes as
$
\prod_{\zeta^e=1}\left(t^{d/e}-\zeta \prod_{i=1}^rx_1^{a_1/e}\cdots x_r^{a_r/e}\right)
$.
Since $d/e=\operatorname{lcm}(a_1/e,\ldots,a_r/e)$, and $\operatorname{gcd}(d/e,a_1/e,\ldots,a_r/e)=1$, we see that we can reduce to the case of  ($e$ copies of) the previous example.
\end{exa}

\begin{exa}
Again consider the variety $X$ in $\operatorname{Spec}k[\underline x,t]$ defined by 
$
t-x_1^{a_1}\cdots x_r^{a_r}
$.
Using the same notation as above, set $\ell=\operatorname{lcm}(a_1,\ldots,a_r)$,   \emph{assume that $d=n\cdot \ell$ for some $n\in \mathbb N$, and again discard the assumption that $\operatorname{gcd}(d,a_1,\ldots,a_r)=\operatorname{gcd}(a_1,\ldots,a_r)=1$}.
   One can show (see e.g.~\cite[Lemma 2, p.103]{mumetal}) that $X_d=B_d\times_{B_\ell} X_{\ell}$. 
 \end{exa}

\begin{rem}\label{remssrexa}
In summary, for the variety $X\subseteq \operatorname{Spec}k[\underline x,t]$ defined by 
$
t-x_1^{a_1}\cdots x_r^{a_r},
$ 
in the notation above if $d=n \cdot \operatorname{lcm}(a_1,\ldots,a_r)$ for some $n\in \mathbb N$, then $X_d$ consists of $e=\operatorname{gcd}(a_1,\ldots,a_r)$ connected components.  On each of these components, $\pi_d^{-1}(0)$ gives rise to a toroidal embedding without self-intersection.    
For surfaces, the singularities appearing on $X_d$ will be at worst of type $A$ (the definition of a type $A$ singularity is recalled in  \S \ref{secsing}).  
\end{rem}

We now briefly outline the Mumford et al.~ proof of the Semi-stable Reduction Theorem (see \cite[pp.98-108]{mumetal} for more details).

\begin{proof}[Sketch of the proof of the Semi-stable Reduction Theorem]  \ \ \ \ 
Let $\pi:X\to B$ be a morphism as in the statement of the theorem.  Using the  characteristic  zero assumption, perform a log resolution  of the pair $(X,\pi^{-1}(o))$.  We obtain a new family $\tilde \pi:\tilde X\to B$, where $\tilde \pi^{-1}(o)$ is normal crossing (although it may not be reduced).      Setting $\ell$ to be the $\operatorname{lcm}$ of the multplicities of the components of $\tilde \pi^{-1}(o)$,  make a base change of degree $\ell$, and then normalize the total space.   

Call the space obtained $X^\nu \to B_\ell$.
The claim is that $X^\nu$ satisfies the conditions of the theorem, with the possible exception that $X^\nu$ may fail to be smooth (in which case the central fiber may only induce a toroidal embedding without self intersection, rather than being nc).
Indeed, the question is \'etale local, so to describe $X^\nu$ we may  reduce to the examples we have already considered, where $\tilde X$ is defined by 
$$
t-\prod_{i=1}^rx_1^{a_1}\cdots x_r^{a_r}.$$

The remaining issue is to resolve the singularities of the total space of $X^\nu$ (while retaining the property that the central fiber induces a toroidal embedding without self-intersection).  This is done in \cite[pp.104-108]{mumetal}; note that a further base change may be required (see \cite[p.107]{mumetal}).
\end{proof}

\begin{rem}
For the case of families of curves (where $X$ is a surface), the total space of $X^\nu$ has type $A$ singularities, and one can achieve the resolution in the final step above by a sequence of blow-ups introducing chains of rational curves.
\end{rem}

\section{Monodromy}\label{secmono}

The monodromy representation is a topological invariant associated to a family over a punctured disk.
It is the essential invariant in the context of period maps (see Remark \ref{remfinmon}), as well as for abelian varieties.
 In this section, we briefly review the definition of monodromy, and then compute a few examples.  We then state the monodromy theorem. 
While there is an algebraic monodromy representation for families over DVRs,  for simplicity, we restrict to the case of  monodromy in the analytic setting.

\subsection{Preliminaries on  monodromy}

Let $ X^\circ\to S^\circ$ be a smooth family of complex, projective varieties over the punctured disk.  
It is well known that for each $t_1,t_2\in S^\circ$, the fiber $X_{t_1}$ is diffeomorphic to the fiber $X_{t_2}$  (see e.g.~\cite[Thm.~2.3, p.61]{kodaira}).  
 In particular, the fibers are all homeomorphic, and the  cohomology groups $H^\bullet (X_t,\mathbb C)$ are isomorphic for all $t\in S^\circ$.    Fix a base point $\ast\in S^\circ$ and consider  a path $\gamma:[0,1]\to S^\circ$  that generates $\pi_1(S^\circ,\ast)$.    The family of groups $H^\bullet(X_{\gamma(\tau)},\mathbb C)$,  $\tau\in [0,1]$, determines an automorphism of $H^\bullet (X_\ast,\mathbb C)$.    The induced homomorphism
$$
\pi_1(S^\circ,\ast)\to \operatorname{Aut}H^\bullet(X_\ast,\mathbb C)
$$
is called the (analytic) monodromy representation of the family.

\begin{figure}[htp]\label{figmon}
\includegraphics[scale=0.25]{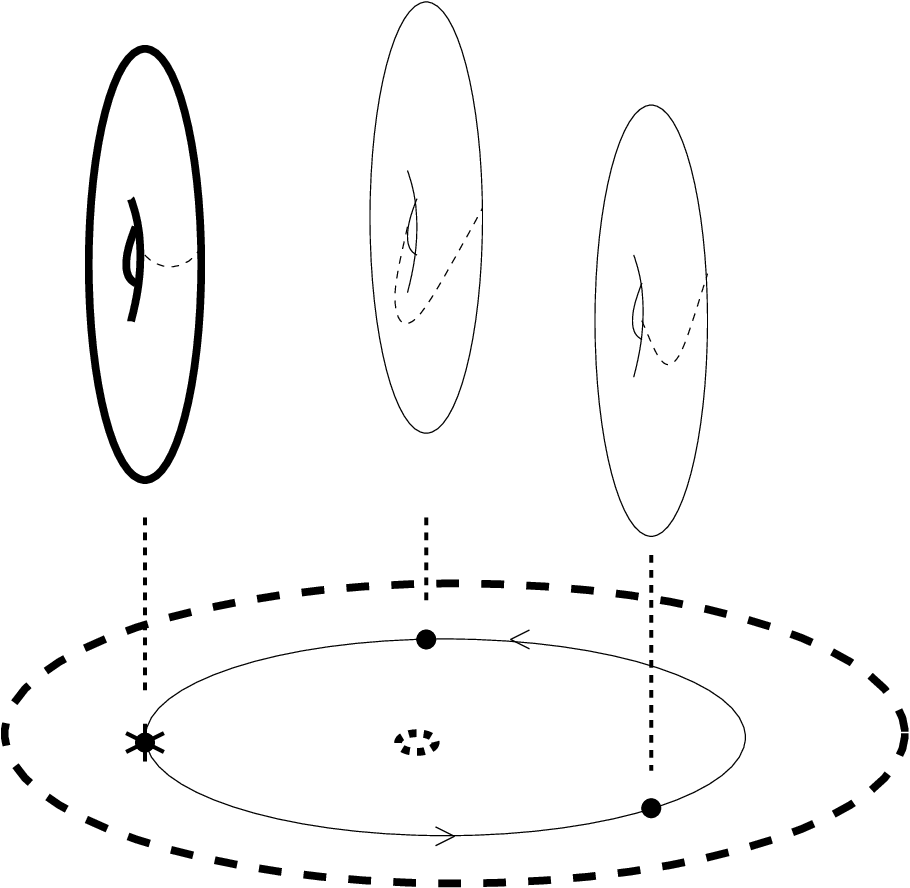}
\caption{Monodromy}
\end{figure}

For a family that extends to a smooth family over $S$, the monodromy representation  is trivial. 
In this sense, monodromy is an invariant that is meant to detect something about the  singularities of the central fiber of a degeneration.   For instance, ordinary double points ($A_1$ singularities)  give rise to the so called Picard--Lefschetz transformations (see the example in \S \ref{secmononode}).  
On the other hand, we note that it is not the case that trivial monodromy implies that a family can be extended to a smooth family over the disk.  
For an elementary example, see Remark \ref{remtrivmon}.  For a more interesting example, see Friedman \cite{friedman83}.

Recall that an endomorphism $T$ of a finite-dimensional vector space $V$ is said to be \textbf{unipotent} (resp. \textbf{quasi-unipotent})  if there exist $M\ge 1$ (resp. $M,N\ge 1$) such that  $(T-\operatorname{Id}_V)^M=0$ (resp. $(T^N-\operatorname{Id}_V)^M=0$).

\subsubsection{A family of stable curves} \label{secmononode}
Consider the family
$$
x_2^2-(x_1^2-t)(x_1-1);
$$
i.e.~a family of smooth elliptic curves degenerating to a nodal cubic.  
Set $\ast=1/2$ and let $\gamma:[0,1]\to S^\circ$  be a parameterization of the circle of radius $1/2$.    The family of varieties lying over $\gamma$ is a family of elliptic curves determined by the branch locus $\{-\sqrt t, \sqrt t, 1,\infty\}$.   There is  a  basis    of $H^1( X_\ast,\mathbb C)$   for which the monodromy representation  is given by
$$
M_{A_1}=\left(
\begin{array}{cc}
1& 1\\
0& 1\\
\end{array}
\right).
$$
See Carlson--M\"uler-Stach--Peters \cite[p.18]{cmp} for a detailed exposition of this.  Note that this matrix is unipotent.  This transformation is in fact  a special case of   the Picard--Lefschetz theorem describing the monodromy transformations for degenerations to $A_1$ singularities (see e.g.~\cite[Ch.2, \S1.5]{arnoldsum}).

\subsubsection{A family of cuspidal curves}\label{secmonocusp}
Consider the family
$$
x_2^2-x_1^3-t;
$$
i.e.~a family of smooth elliptic curves degenerating to a cuspidal cubic.   Again, set $\ast=1/2$ and let $\gamma:[0,1]\to S^\circ$  be a paramaterization of the  circle of radius $1/2$.
There is a basis of  $H^1( X_\ast,\mathbb C)$   for which the monodromy representation  is given by
$$
M_{A_2}=\left(
\begin{array}{cc}
0& -1\\
1& 1\\
\end{array}
\right).
$$
\begin{figure}[htp]
\includegraphics[scale=0.25]{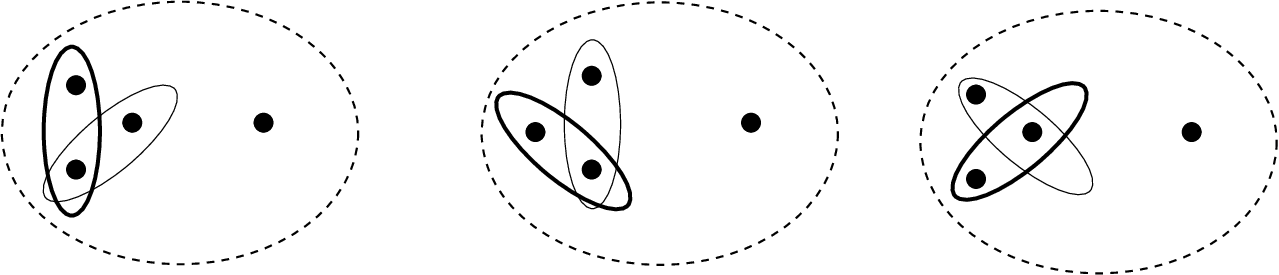}
\caption{Monodromy for a cuspidal family}\label{figmonA2}
\end{figure}
Figure \ref{figmonA2} shows the transformation of cycles on the copy of $\mathbb P^1$ lying below the elliptic curve,  with respect to the branch locus, for $t=\gamma(0)=1/2$, $t=\gamma(1/2)=-1/2$ and $t=\gamma(1)=1/2$, respectively.  Considering lifts of these cycles, one arrives at the matrix above.

In this example, the monodromy representation is quasi-unipotent, but not unipotent.
 Note additionally that $M_{A_2}^3=-\operatorname{Id}$.  This implies that after pulling the family back by a triple cover, the monodromy will be given by $-\operatorname{Id}$.    This new family gives a special case of the monodromy obstruction computation  made in \S \ref{secmonobexa}; in the notation of that section, this is the family given by taking $t_2=0$.  
 
Note further that since $M_{A_2}^6=\operatorname{Id}$,  if we pull the family back by a six-fold cover, the monodromy becomes trivial.  
This can be seen directly in the following way.  The family  obtained after a six-fold cover is 
$$
x_2^2-x_1^3-t^6.
$$
Changing coordinates by $x_1\mapsto x_1t^2$ and $x_2\mapsto x_2t^3$ gives the family
$$
x_2^2-x_1^3-1.
$$
In other words, after the degree six base change, the family can be extended to a \emph{trivial} family over $S$.  (Note the $j$-invariant of the original family was equal to zero for all $t\in S^\circ$.)

\begin{rem}  More generally, Kodaira has classified the degenerations of elliptic curves,   and their associated monodromy representations.   We direct the reader to 
\cite[\S V.7]{barth} for more details.
\end{rem}

\subsection{The monodromy  theorem}

\ \ The monodromy theorem is a general statement about the monodromy representation of a family of projective manifolds over the punctured disk.  This will play an important role in regards to an extension theorem of Grothedieck for abelian varieties, which  we discuss in  \S \ref{secab}.

\begin{teo}[Monodromy Theorem] Let $\pi^\circ:X^\circ \to S^\circ$ be a family of smooth, complex, projective manifolds of dimension $n$ over the punctured disk.  For each integer $0\le k\le 2n$,   the monodromy representation
$$
\pi_1(S^\circ,\ast)\to \operatorname{Aut}H^k(X_\ast,\mathbb C)
$$
is quasi-unipotent.
\end{teo}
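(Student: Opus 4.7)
The plan is to deduce the theorem from the Semi-Stable Reduction Theorem \ref{teossr} together with the fact that the monodromy of a semi-stable degeneration is unipotent. The key elementary observation is that if $f\colon S'\to S$ is a base change of degree $N$ totally ramified at $o$ (e.g.\ $t\mapsto t^N$), then a generator $\gamma'$ of $\pi_1((S')^\circ,\ast')$ maps under $f$ to $\gamma^N$, so the monodromy $T'$ of the pulled-back family on $H^k$ is precisely $T^N$, where $T$ denotes the monodromy of the original family. Proving that $T$ is quasi-unipotent therefore amounts to exhibiting some $N\ge 1$ such that $T^N$ is unipotent.

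First, I would extend $\pi^\circ$ to a flat projective family $\pi\colon X\to S$, for instance by taking a relative projective closure with respect to a relative ample line bundle on $X^\circ/S^\circ$; this extension does not affect the monodromy, as it is an isomorphism over $S^\circ$. Applying Theorem \ref{teossr} then produces a diagram as in \eqref{eqnsrdiag} with $X'$ smooth, $p$ an isomorphism over $U'=(S')^\circ$, and $(\pi')^{-1}(o')$ a reduced snc divisor. By the observation above, the monodromy of $\pi'$ on $H^k(X_\ast,\mathbb{C})$ equals $T^N$.

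Second, I would show that the monodromy of the semi-stable family $\pi'$ is unipotent. The standard route is via Steenbrink's construction of the limit mixed Hodge structure on the nearby cohomology: $T^N$ preserves a weight filtration $W_\bullet$ on $H^k(X_\ast,\mathbb{C})$, the nilpotent operator $N_{\log}:=\log T^N$ lowers weight by $2$, and $W_\bullet$ is concentrated in weights $0,\dots,2k$, so iterating gives $(N_{\log})^{k+1}=0$, and hence $(T^N-\operatorname{Id})^{k+1}=0$. Combining the two steps completes the proof.

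The hard part is the unipotency in the semi-stable case, which genuinely requires Hodge-theoretic input (or equivalently the structure of the nearby cycles complex $\psi\underline{\mathbb{Q}}$ along a reduced normal crossings divisor, as in Steenbrink's original work or Katz's exposition in SGA~7). A more arithmetic alternative, in the spirit of Borel and Grothendieck, would observe that $T$ preserves a lattice $H^k(X_\ast,\mathbb{Z})/\text{torsion}$ and so lies in $\operatorname{GL}_m(\mathbb{Z})$; combined with an a priori bound on the eigenvalues of $T$ (for instance from the positivity of the Hodge norm), Kronecker's theorem on algebraic integers of absolute value $1$ would yield the same conclusion.
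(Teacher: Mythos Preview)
Your proposal is correct and follows exactly the route the paper itself indicates: the paper does not give a self-contained proof of the Monodromy Theorem but instead refers to the literature (Griffiths, Grothendieck) and then remarks that the theorem can be deduced from the Semi-Stable Reduction Theorem together with the fact that a family whose central fiber is a reduced snc divisor has unipotent monodromy. That is precisely your argument, and you have supplied the details the paper omits (the identification of the monodromy after base change with $T^N$, and the Steenbrink weight-filtration argument for unipotency, with the Borel--Grothendieck lattice/Kronecker alternative noted as well).

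One small technical point worth flagging: Theorem~\ref{teossr} as stated is for algebraic varieties over an algebraically closed field of characteristic zero, whereas the Monodromy Theorem is stated for an analytic family over the punctured disk. Your ``extend to a projective family over $S$'' step implicitly bridges this, but strictly speaking one either invokes an analytic version of semi-stable reduction or algebraizes the situation (e.g.\ by spreading out over a curve and using GAGA). This is standard and does not affect the validity of the argument, but it is worth a sentence if you want the sketch to be airtight.
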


 The reader is directed to Griffiths  
 \cite[Rem.~3.2, p.236]{griffiths70} for references, including a discussion of the history of the theorem and a description of the many different methods of proof (see also Grothendieck  \cite[Thm.~1.2, p.6]{sga7I} for the algebraic statement).

\begin{rem}\label{remqutou}
The monodromy theorem implies that for a family of smooth, complex, projective manifolds   over the punctured disk, after a finite base change   the monodromy  can be made unipotent.  Indeed, if the generator of the monodromy representation is given by the automorphism $T$, then   $(T^N-\operatorname{Id})^M=0$ for some $N,M$.  Thus after the base change given by $t\mapsto t^N$, the monodromy will be  unipotent.  We note that many of the proofs of the monodromy theorem provide bounds on $N$ and $M$. 
\end{rem}

\begin{rem}
If $\pi:X\to S$ is a family of complex projective varieties over the disk, which is smooth over the punctured disk,  and is such that $X_0:=\pi^{-1}(0)$ is a reduced snc divisor in $X$, then the monodromy representation is unipotent (see the references in \cite{griffiths70}).    One can deduce the Monodromy Theorem from this using the Semi-Stable Reduction Theorem \cite{mumetal} (Theorem \ref{teossr}).
\end{rem}

\begin{rem}
As is evident in the previous  remark, if $\pi:X\to S$ is a family of complex projective varieties over the disk,  which is smooth over the punctured disk, then the topology of $X_0$ is related to the monodromy of the family.    The Clemens--Schmid exact sequence makes this precise (e.g.~Morrison \cite[p.109]{morrison}).  There is also a notion of vanishing cohomology for isolated singularities on $X_0$.   There is a monodromy operator on the vanishing cohomology, which is related to the monodromy of the family by an exact sequence.  We direct the reader to \cite[p.V, 7--9]{sga7I} and \cite[(1.4)]{steenbrink} for more details.
\end{rem}

\begin{rem} \label{remfinmon}
Monodromy is the essential invariant in the context of period maps.   In particular, given a family $X^\circ \to S^\circ$ of smooth, projective varieties over the punctured disk, then via  Hodge theory one obtains a period map $S^\circ\to D/\Gamma$, where $D$ is the period domain and $\Gamma$ is an arithmetic group.   The period map extends to a morphism $S\to D/\Gamma$ if and only if the monodromy representation is finite; i.e.~generated by a root of the identity (e.g.~\cite[Thm.~13.4.5, p.355]{cmp}).
\end{rem}

\section{Abelian varieties}  \label{secab}

In this section we return to the question of stable reduction, and consider the case of abelian varieties.  Historically, this was one of the first places where questions about stable reduction were considered.  The monodromy theorem discussed in the previous section plays a central role, essentially due to the equivalence of categories between abelian varieties and Hodge structures of weight $1$.

Further motivation  comes from the  connection with the  original proof of the stable reduction theorem for curves in positive characteristic,
  which we discuss further in the next section. In this section we also consider stable reduction in the context of Alexeev's moduli space of stable semiabelic pairs \cite{alexeev}.

\subsection{An example of stable reduction for a family of abelian varieties}  
   
The family described in \S \ref{secexa}, viewed as a family of abelian varieties, gives a concrete example of stable reduction.  
Historically, however, one of the motivations for the development of the theory was to study abelian schemes over $\mathbb Q$  by reducing modulo a prime $p$.    Viewing the abelian scheme over $\mathbb Q$ as a family over the generic point of $\operatorname{Spec} \mathbb Z$, problems concerning reduction modulo a prime can be translated into problems about extending abelian schemes over $\mathbb Q$ to schemes over  $\operatorname{Spec}\mathbb Z$.

While a well known  theorem of Fontaine 
\cite[Cor., p.517]{fontaine} 
states there are no abelian schemes over $\operatorname{Spec}\mathbb Z$, so there can not  be an extension to an abelian scheme over every prime, the stable reduction theorem addresses the question of extending over a particular prime after a finite base change.  

With this as motivation, we consider the following example, which
 is closely related to the previous geometric examples, and  emphasizes  the connection between  the two  settings.
 We will use the terminology of group schemes, which we review in the next subsection.   

Let  $X\to \operatorname{Spec}\mathbb Z$ be the projective scheme defined by 
$$
y^2-x^3-25\alpha x-125\beta=0,
$$
with $\alpha$ and $\beta$ integers such that  $4\alpha^3+27 \beta^2$ is not divisible by $5$.   Let $X_{\mathbb Q}$ be the scheme obtained by base change to $\operatorname{Spec}\mathbb Q$.   There is the usual group law on $X_{\mathbb Q}$ induced by the point at infinity $(0:1:0)$.   
We are interested in understanding how this fails to extend to a group law on $X$ over $\operatorname{Spec}\mathbb Z$, and how one might attempt to rectify this at a particular prime by taking a finite cover.

Concerning the group law, one can check directly that  $X\to \operatorname{Spec}\mathbb Z$ fails to be smooth over (at least) the  primes $2$, $3$ and $5$, so that  $X_{\mathbb Q}$ does not extend to a group scheme over those points.    
In this example, we  focus on the issue of extension over $(5)$.  
Let $$X_{(5)}\to \operatorname{Spec}\mathbb Z_{(5)}$$ be the scheme obtained from $X$ by base change. 
The fiber over the generic point $(0)$ is $X_{\mathbb Q}$ and we are interested in extending $X_{\mathbb Q}$ to an abelian scheme over $\operatorname{Spec}\mathbb Z_{(5)}$.  

Let $X_{\mathbb F_5}$  be the fiber of $X_{(5)}$ over the closed point.  Then    $X_{\mathbb F_5}$ is given by the equation 
$$
y^2-x^3=0,
$$
which is singular at $(0:0:1)$.       We would like to describe a finite base change and a modification of the family that is smooth.

Consider the finite, degree $2$ morphism $$B':=\operatorname{Spec}\mathbb Z_{(5)}[\zeta]/(\zeta^2-5)\to \operatorname{Spec}\mathbb Z_{(5)}$$ induced by the extension $\mathbb Q(\sqrt[2]{5})/\mathbb Q$.  Pulling back $X_{(5)}$ we obtain a family $X'\to B'$ defined by the equation
$$
y^2-x^3-\alpha\zeta^4x-\beta\zeta^6=0.
$$
Making the change of coordinates $x\mapsto \zeta^2x$, $y\mapsto \zeta^3y$, we arrive at a family $\tilde X\to B'$ defined by 
$$
y^2-x^3-\alpha x-\beta =0.
$$
This is smooth over the closed point $(\zeta)\in B'$, and in fact there is a group law over $B'$.   Thus, after a finite, degree two, base change, we have modified our family to give an abelian scheme over the base.

\begin{rem}
It is interesting to note the connection with the geometric case.  The example above was constructed to be the analogue of the linear family over the Weyl cover (\S \ref{secmonobexa}): 
$$
y^2+x^3-(b_2^2+b_2+1) b_1^2 x-b_2(1+b_2) b_1^3 =0,
$$
where, roughly speaking,  we replaced $x_2$ with $y$, $x_1$ with $-x$, set $b_1=5$ and took $b_2$ general. We had seen that a degree two base change for the analytic family would allow for stable reduction, and this is exactly what we have found here in the arithmetic setting.  
\end{rem}

\begin{rem}
For completeness, we mention that the discriminant $\Delta$ and $j$-invariant of $X$ are
 $$\Delta=-16(4\alpha^3+27 \beta^2)(5^6)\ne 0 \ \ \text{ and } \ \ j=-1728(4\alpha)^35^6/\Delta.$$
Note that $j$ has non-negative valuation at $5$.  It is well known that from this data one can deduce that the family does not have abelian reduction at $5$, but does have potentially abelian reduction there   (see e.g.~\cite[VII 
Prop.~5.1, 5.5]{silverman}).
\end{rem}

\subsubsection{Monodromy}
Recall  that the analytic monodromy representation of the analogous family was given by the negative of the identity (\S \ref{secmonobexa}).
Although we have not introduced the algebraic monodromy operator, we make the following observation.  Since there is abelian reduction after a degree two base change, one can immediately conclude that the algebraic monodromy operator is a square root of the identity.  
One can then show the  action on non-trivial,  torsion points of sufficiently high order (relatively prime to $2$ and $5$) is non-trivial.  Thus  the monodromy operator is the negative of the identity, similar to the analogous analytic case.   For more on degenerations of elliptic curves and monodromy, we direct the reader to the discussion of the Kodaira--N\'eron classification in \cite[App.~C.15]{silverman}.

\subsection{Group scheme terminology}  We now review some of the basic terminology of group schemes, directing the reader to \cite[\S 4.1]{blr} and \cite[Ch.6]{git} for more details.  
For a scheme $B$, a \textbf{$B$-group scheme} is a group object in the category of $B$-schemes ($\operatorname{Sch}/B$).    
A standard example, which  we will use frequently, is  $\mathbb G_m=\operatorname{Spec}\mathbb Z[t,t^{-1}]$, with group law induced by the map
$$
\mathbb Z[t,t^{-1}]\to \mathbb Z[t,t^{-1}]\otimes_{\mathbb Z}\mathbb Z[t,t^{-1}] \ \ \text{ given by } \ \ t\mapsto t\otimes t.
$$
For an arbitrary scheme $B$, we define $\mathbb G_{m,B}$ by base extension, and the induced group law makes $\mathbb G_{m,B}$ a group object in the category of $B$-schemes. 
An \textbf{(affine) split $B$-torus $T$} is a $B$-group scheme that is isomorphic as a $B$-group scheme to a finite fibered product $\mathbb G_{m,B}\times _B\ldots \times_B\mathbb G_{m,B}$.  An \textbf{(affine) $B$-torus $T$} is a $B$-group scheme that is \'etale locally on $B$ a split torus.
We define \textbf{$B$-subgroup schemes} in the obvious way (see e.g.~\cite[p.98]{blr}).

An \textbf{abelian scheme over $B$} is a $B$-group scheme that is smooth and proper over $B$ with connected fibers.  It follows from the Rigidity Lemma that the group law of an abelian scheme is commutative (see e.g.~\cite[Pro.~6.1, Cor.~6.4, p.115-6]{git}).
It  is a well known result   that an abelian scheme over a field is projective  (\cite{weil}).

\begin{rem}
 In order to use the term variety consistently (within this paper), we  reserve the term \textbf{abelian variety} for an abelian group scheme over an algebraicaly closed field. (This is not standard, in that one usually does not require the field to be algebraically closed.)
\end{rem}

The best understood abelian schemes are Jacobians of curves.  Recall that associated to a smooth curve $X$ over an algebraically closed field, there is an abelian variety $JX$, called the Jacobian of $X$,  parameterizing degree zero line bundles on $X$.   We note in addition that associated to a family of smooth curves $X\to B$, there is an associated abelian scheme $JX_B$ over $B$, called the (relative) Jacobian of $X_B$, with geometric fibers that are the Jacobians of the associated curves.

  A \textbf{semi-abelian scheme} $G_B$ over $B$ is a smooth, separated, commutative $B$-group scheme such that each fiber $G_{B,b}$ over $b\in B$ is an extension of an abelian scheme $A_{b}$ by an affine torus $T_{b}$:
$$
0\to T_{b}\to G_{B,b}\to A_{b}\to 0.
$$
  We direct the reader to \cite[Cor.~2.11]{fc} for a statement on the global structure of a semi-abelian scheme.   Extensions of an abelian variety $A/k$ by a torus $T/k$ are classified (up to isomorphism as extensions) by  $\operatorname{Hom}\left(X(T),\widehat A\right)$, where $X(T)=\operatorname{Hom}(T,\mathbb G_{m,k})$ is the character group and $\hat A=\operatorname{Pic}^0(A)$ is the group of line bundles on $A$ algebraically equivalent to zero   (see e.g.~\cite[Thm.~6, p.184]{serregroupes}).

We now come to the topic of reduction.   Let $R$ be a DVR,  let $K$ be its field of fractions, and let $S=\operatorname{Spec} R$.  Let  $A_K$  be an abelian scheme over $\operatorname{Spec}K$.  
 We say that $A_K$ has \textbf{abelian (or good) reduction} (resp.~\textbf{semi-abelian reduction}) 
 if $A_K$ can be extended to a smooth, separated $S$-group scheme $G_S$ of finite type over $S$ such that the
 the fiber over the closed point $s\in S$ 
 is an abelian (resp.~semi-abelian) scheme over $s$.  We will say that  $A_K$ has \textbf{potentially abelian (or good) reduction} (resp.~\textbf{potentially semi-abelian reduction})  if there is a finite extension $K'$ of $K$,   
 so that the abelian scheme $A_{K'}$ obtained by base change has abelian (resp.~semi-abelian) reduction.

\subsection{N\'eron models}  N\'eron models provide a natural context for discussing the stable reduction theorem for abelian varieties.
  While the theory can be developed in more generality over Dedekind domains, we focus on the case of DVRs for simplicity.  
  
     As above, let $S=\operatorname{Spec} R$ be the spectrum of a DVR with fraction field $K$. 
   Let $X_K$ be a smooth, separated $K$-scheme of finite type.  
    A \textbf{N\'eron model} of $X_K$ is an extension $X_S$ of $X_K$ over $S$ that is a smooth, separated scheme of finite type, satisfying the following universal property: for any smooth $S$-scheme $Y_S$ and any $K$-morphism  $f_K:Y_K\to X_K$ there is a unique $S$-morphism $f_S:Y_S\to X_S$ extending $f_K$.  
    
$$
\xymatrix
{
&X_K\ar@{->}[rr] \ar@{-}[d]&&X_S \ar@{->}[dd]\\
Y_K\ar@{->}[rr] \ar@{->}[rd] \ar@{->}[ru]^{f_K}&\ar@{->}[d]&Y_S\ar@{->}[rd] \ar@{-->}[ru]^{f_S}_{\exists !}&\\
&\operatorname{Spec}K\ar@{->}[rr]&&S.\\
}
$$
If a N\'eron model exists, it is unique up to unique isomorphism.  
While N\'eron models do exist in a more general setting, we will focus here on the case of  abelian schemes.  The main theorem in this situation is:

\begin{teo}[N\'eron \cite{neron64}]
Let $A_K$ be an abelian scheme over the field of fractions $K$  of a DVR $R$.  Then $A_K$ admits a N\'eron model $X_S$ over $S=\operatorname{Spec}R$.    
\end{teo}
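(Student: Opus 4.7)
The plan is to construct the N\'eron model directly and then verify the universal property, using two crucial inputs: Weil's extension theorem for rational maps into smooth group schemes, and N\'eron's smoothening procedure for proper flat models. I would first reduce to the case where $R$ is strictly henselian. Indeed, if the N\'eron model exists over $R^{sh}$, then by its uniqueness it carries a canonical descent datum with respect to the Galois action of $\operatorname{Gal}(R^{sh}/R)$, and faithfully flat descent for quasi-affine schemes yields the model over $R$. Henceforth assume the residue field is separably closed.

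Next, produce a proper flat $S$-model of $A_K$: embed $A_K$ into some projective space over $K$ (abelian varieties over a field are projective) and take its scheme-theoretic closure $\overline{X} \subset \mathbb{P}^N_S$. This $\overline{X} \to S$ is proper and flat, and its generic fiber is $A_K$, but its special fiber is typically singular. Apply N\'eron's group smoothening procedure: a finite sequence of blow-ups with centers supported in the special fiber that modifies $\overline{X}$ into a proper model $X_S$ whose smooth locus $X_S^{sm}$ captures every $R^{sh}$-point of $A_K$, i.e., the natural map $X_S^{sm}(R^{sh}) \to A_K(K^{sh})$ is a bijection. Call such an $X_S^{sm}$ a weak N\'eron model.

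With a weak N\'eron model in hand, I would upgrade it to a N\'eron model using Weil's extension theorem: a rational $S$-map from a smooth $S$-scheme to a smooth, separated $S$-group scheme, defined on the generic fiber, extends to a morphism away from a subset of codimension at least two, and extends everywhere when the source is regular of dimension at most one along that locus. Applied to the multiplication and inversion maps of $A_K$, this lets one extend the group law to $X_S^{sm}$ (possibly after removing some components of the special fiber to obtain separatedness and to shrink to the ``identity component'' of the group), producing a smooth separated $S$-group scheme of finite type. The universal mapping property then follows from the same extension theorem: given a smooth $Y_S \to S$ with a $K$-morphism $f_K: Y_K \to A_K$, the induced rational map $Y_S \dashrightarrow X_S^{sm}$ extends, because $X_S^{sm}$ is a smooth $S$-group scheme, and smoothness of $Y_S$ guarantees regularity in codimension one.

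The main obstacle is the smoothening step. One must show that the blow-up process terminates in finitely many iterations with a model enjoying the claimed bijection on $R^{sh}$-points. The classical argument introduces a numerical invariant, N\'eron's \emph{defect of smoothness}, measuring the failure of $\overline{X}$ to be smooth at points in the special fiber where $R^{sh}$-points specialize; a careful choice of blow-up center (the non-smooth locus of the special fiber, or an appropriate subscheme thereof) strictly decreases this invariant, and properness of the ambient model bounds it, forcing termination. Bookkeeping the behavior of $R^{sh}$-points under each blow-up, to ensure none are lost, is the most delicate part of the whole argument and is where most of the technical work of \cite{neron64} is concentrated.
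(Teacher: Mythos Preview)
The paper does not give a proof of this theorem; it is stated as a cited result of N\'eron, with a pointer to \cite{blr} and Artin \cite{artin86} for details. So there is no ``paper's own proof'' to compare against. Your outline follows the standard approach found in those references (especially \cite{blr}): build a proper flat model, apply N\'eron's smoothening to obtain a weak N\'eron model, and then use Weil's extension theorem to get the group structure and the universal property.

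That said, a few steps in your sketch are imprecise and would need repair in a full proof. First, N\'eron models are not quasi-affine in general, so the descent from $R^{sh}$ to $R$ cannot be justified by ``faithfully flat descent for quasi-affine schemes''; one instead uses that the construction is canonical and compatible with \'etale base change, together with effectivity of descent for quasi-projective schemes (Raynaud shows the N\'eron model of an abelian scheme is quasi-projective, as the paper notes). Second, your statement of Weil's extension theorem is garbled: the correct version says a rational map from a smooth $S$-scheme to a smooth separated $S$-group scheme, defined outside a closed set of codimension $\ge 2$, extends everywhere. Third, the passage from a weak N\'eron model to the N\'eron model is more than ``removing some components'': one must first extend the group law birationally to the weak N\'eron model, then invoke the theorem that a birational group law on a smooth $S$-scheme with enough sections comes from an actual $S$-group scheme (this is where Weil's theorem on birational group chunks enters), and only then identify the resulting group scheme as the N\'eron model. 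Your description of the smoothening step and the defect-of-smoothness invariant is accurate in spirit.
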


We direct the reader also to \cite[Cor.~2, p.16, Pro.~6, p.14]{blr} and Artin \cite{artin86}.  

\begin{rem}
From the universal property of the N\'eron model, it follows that the $K$-group scheme structure on $A_K$ extends uniquely to a commutative $S$-group scheme structure on $X_S$.      For group schemes, the condition that the N\'eron model be of finite type and separated is superfluous (e.g.~\cite[p.12, Rem.~7, p.14]{blr}).   Finally, it is a result of Raynaud that the N\'eron model of an abelian scheme is quasi-projective \cite[Thm.~VIII.2, p.120]{raynaudf}.
\end{rem}

\begin{rem}
The special fiber of a N\'eron model of an abelian scheme need not be connected.   One such example is given by a smooth plane cubic degenerating to a nodal plane cubic that is the union of a   line and a smooth conic.
	The special fiber $X_{S,s}$ of the N\'eron model can be computed using a result of Raynaud discussed in the remark below.  One can show $X_{S,s}$ fits into an exact sequence $$0 \to  \mathbb G_m \to  X_{S,s} \to \mathbb Z/2\mathbb Z \to 0$$ 
(see e.g.~Kass \cite[\S 4.3]{kass}).    
There is always, however, an open $S$-subgroup scheme  of a N\'eron model of an abelian scheme $A_K$ that extends $A_K$ and has connected central fiber.     We will denote this by $X_S^\circ$.
\end{rem}

\begin{rem} \label{remray}
As with abelian varieties, the best understood N\'eron models are those associated to  curves.
One of the main tools  is a theorem of Raynaud's, relating the N\'eron model of a Jacobian to the Picard functor.    The following is a weaker  version of the  theorem, given in Deligne--Mumford \cite[Thm.~2.5]{dm}, which is used in the proof of the stable reduction theorem for curves.
\emph{Assume the residue field of $R$ is algebraically closed.  In the notation above, let $C_S$ be a generically smooth family of nodal curves over $S$, with non-singular total space, and let $A_K$ be the Jacobian of the generic fiber.
Then the open $S$-subgroup scheme $X_S^\circ$ of the N\'eron model $X_S$ of $A_K$, described above, represents the  relative (connected component of the) Picard functor.}
\end{rem}

\begin{exa}
Assume the residue field of $R$ is algebraically closed.  
Consider a family $C\to S$ of smooth curves   degenerating to an irreducible, stable curve $C_s$ with a single node.  Let $C_s^\nu$ be the normalization of $C_s$, and assume that $C_s$ is obtained from $C_s^\nu$ by attaching points $p,q\in C_s^\nu$.    The family of curves $C_K$ over $K$ determines a principally polarized abelian scheme $X_K=JC_K$, the Jacobian of the curve.     The special  fiber of  the open $S$-subgroup scheme $X_S^\circ$ of the N\'eron model of $X_K$ is an extension
$$
0\to \mathbb G_m\to X_{S,s}^\circ\to JC_s^\nu \to 0
$$ 
determined by the data of the line bundle $\mathscr O_{C_s^\nu}(p-q)$. 
\end{exa}

\subsubsection{The group structure of the central fiber of the N\'eron model} 
In the notation above, we have seen that the N\'eron model of an abelian scheme $A_K$ is a commutative group scheme over $S$.    To get a handle on how   N\'eron models are connected to the question of semi-abelian reduction, we  will investigate the group structure on the central fiber of the N\'eron model using  a few basic facts from the theory of algebraic groups (see also Serre \cite{serregroupes}).

To begin, we recall Chevalley's theorem \cite{chevalley60,rosenlicht56} (see esp.~\cite{blr} and \cite[Thm.~1.1]{conrad02}): 
\emph{
Let $K$ be a field and let $G$ be a smooth, connected algebraic $K$-group.  Then there exists a smallest (not necessarily smooth) connected linear subgroup $L$ of $G$ such that the quotient $G/L$ is an abelian scheme over $K$.  Moreover, if $K$ is perfect, $L$ is smooth and its formation is compatible with change of base field.}

In other words, if $X_S$ is the N\'eron model of $A_K$, then the connected component of the identity in the central fiber $X_{S,s}^\circ$ fits into an exact sequence over $s$
$$
0\to L \to X_{S,s}^\circ\to A\to 0,
$$
where $L$ is a connected, commutative, linear $s$-group scheme  and $A$ is an abelian $s$-group  scheme.

We now turn our attention to the structure of  linear algebraic groups.    Let us pull-back to the algebraic closure $\bar k=\overline{\kappa(s)}$, and denote the resulting group schemes by $\bar L$, $\overline X_{S,s}^\circ$ and $\bar A$ respectively.  $\bar L$ being commutative, it is solvable (see e.g.~\cite[Def., p.59]{borel}).  There is the following standard theorem   (e.g.~\cite[Thm.~10.6, p.137]{borel}):  \emph{
For a connected, solvable, linear algebraic group $\bar L$ over an algebraically closed field $\bar k$, the subset of unipotent elements $\bar L_u$ is a (closed) connected, normal $\bar k$-subgroup, and the quotient is an affine torus.}
In the situation of the N\'eron model,   this torus can be obtained by pull-back from a torus over $\kappa(s)$. 
Thus  \emph{the (subgroup $X_{S,s}^\circ$ of the) N\'eron model is a semi-abelian scheme, if and only if  $\bar L_u$ is trivial}.

\begin{rem} The standard way to assert that $\bar L_u$ is trivial is to assert that the  unipotent radical of $\bar L$ is trivial.  Indeed, for a connected, solvable group $G$ over an algebraically closed field, the radical $\mathscr RG$ is equal to the group  $G$ (the radical  is the largest connected, solvable, normal subgroup; e.g.~\cite[p.157]{borel}).  Thus in this case the unipotent radical $(\mathscr RG)_u=:\mathscr R_uG$  (the set of unipotent elements of the radical) is equal to the set $G_u$.   \end{rem}

\subsection{The stable reduction theorem}  

The stable reduction theorem plays a central role in the study of abelian varieties, and also in the study of algebraic curves.   In light of the results of N\'eron, and the basic structure theorems for algebraic groups, the stable reduction theorem 
states that after a generically finite base change, the unipotent radical of the  central fiber of the N\'eron model can be made  trivial.

As described in the introduction to  \cite{dm}, the stable reduction theorem was first proved independently by Grothendieck and Mumford in characteristic zero.  Grothendieck's proof used the theory of \'etale cohomology, while Mumford's proof was derived from a stable reduction theorem for curves (in characteristic zero).   Grothendieck then  extended his proof to all characteristics in \cite[Thm.~6.1, p.21]{sga7I} and Mumford provided an independent proof in characteristics other than  $2$ using the theory of theta functions.

\begin{teo}[Grothendieck--Mumford Stable Reduction Theorem]  Let $S=\operatorname{Spec}R$ be the spectrum of a DVR with fraction field $K$.  
An abelian variety $A_K$ over $K$ has potential semi-abelian reduction over $R$.
\end{teo}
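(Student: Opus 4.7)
The approach I would follow is Grothendieck's original one via $\ell$-adic cohomology: reduce the question of semi-abelian reduction of $A_K$ to a statement about the unipotence of the inertia action on an $\ell$-adic Tate module, and then apply the monodromy theorem. By N\'eron's theorem, $A_K$ admits a N\'eron model $X_S$, and by the discussion preceding the statement, semi-abelian reduction is equivalent to the vanishing of the unipotent radical $L_u$ in the filtration
$$0 \to L_u \to L \to T \to 0, \qquad 0 \to L \to X_{S,s}^\circ \to A \to 0$$
of the connected component of the central fiber, with $A$ an abelian variety and $T$ a torus. Hence the task is to produce a finite extension $K'/K$ over which the corresponding unipotent radical is trivial.

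Fix a prime $\ell$ distinct from the residue characteristic of $R$, and consider the $\ell$-adic Tate module $V_\ell(A_K) = T_\ell(A_K)\otimes_{\mathbb Z_\ell}\mathbb Q_\ell$, a finite-dimensional $\mathbb Q_\ell$-vector space equipped with a continuous action of $\operatorname{Gal}(\bar K/K)$. By Grothendieck's algebraic monodromy theorem \cite[Thm.~1.2, p.6]{sga7I}, the inertia subgroup $I$ acts quasi-unipotently on $V_\ell(A_K)$; as in Remark \ref{remqutou}, we may choose a topological generator $\sigma$ of the tame quotient of $I$ and an integer $N \geq 1$ so that $\sigma^N$ acts unipotently. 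Taking $K'/K$ to be a totally ramified degree-$N$ extension, the inertia group of $K'$ then acts unipotently on $V_\ell(A_{K'}) = V_\ell(A_K)$. The conclusion now follows from \emph{Grothendieck's criterion}: an abelian variety over $K$ has semi-abelian reduction precisely when inertia acts unipotently on its $\ell$-adic Tate module for some (equivalently any) $\ell$ coprime to the residue characteristic.

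The main obstacle is Grothendieck's criterion itself. One implication is relatively direct: if the N\'eron model of $A_{K'}$ acquires a semi-abelian central fiber fitting in $0 \to T \to X_{s}^\circ \to A \to 0$, then the Tate module carries a short exact sequence with inertia acting trivially on the outer terms (trivially on the abelian quotient by a N\'eron--Ogg--Shafarevich argument, and through a finite quotient on the toric part by Kummer theory), forcing unipotent action on the extension. The converse---that unipotent inertia monodromy forces $L_u$ to vanish---is the subtle half, requiring one to show that the presence of a nontrivial unipotent radical would contribute non-unipotent eigenvalues to the $\ell$-monodromy through a careful analysis of the $\ell^n$-torsion in the connected N\'eron model. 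In the characteristic-zero setting of the theorem one may alternatively bypass the criterion by a geometric route: fix a projective embedding of $A_K$, apply the Semi-stable Reduction Theorem (Theorem \ref{teossr}) to resolve the degeneration into snc form, and use Raynaud's identification of Picard functors with N\'eron models (Remark \ref{remray}) to read off the semi-abelian structure directly from the snc central fiber, in the spirit of Mumford's original proof.
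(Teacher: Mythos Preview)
Your proposal is correct and follows essentially the same route as the paper: combine the Monodromy Theorem (quasi-unipotence of the inertia action on $V_\ell(A_K)$) with Grothendieck's criterion (Proposition~3.5 in \cite{sga7I}, stated in the paper just after the theorem) and a finite base change to make the monodromy unipotent. One small caveat on your closing aside: Raynaud's result in Remark~\ref{remray} is specifically about N\'eron models of \emph{Jacobians of curves}, so the alternative geometric route you sketch would have to pass through writing $A_K$ as a quotient of a product of Jacobians (as the paper does in \S\ref{seccurves}) rather than by applying semi-stable reduction directly to a projective model of $A_K$.
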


In fact the theorem can be  stated more generally for the case where  $A_K$ is a semi-abelian scheme.  
We refer the reader also to \cite[Thm.~1, p.180]{blr},  and to  \cite[Thm.~2.6, p.9]{fc}.  Grothendieck's proof relies on the  following frequently cited result, which   was the basis of the monodromy obstruction computation in \S \ref{secexa}.

\begin{pro}[Grothendieck {\cite[Prop.~3.5, p.350]{sga7I}}]
In the notation above, $A_K$ has abelian (resp.~semi-abelian) reduction if and only if the monodromy representation  is trivial (resp.~unipotent).
\end{pro}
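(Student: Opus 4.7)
The plan is to analyze both equivalences via the inertia action on the $\ell$-adic Tate module $T_\ell A_K := \varprojlim A_K[\ell^n](\bar K)$ for a prime $\ell$ invertible in the residue field $\kappa$; by definition this inertia action \emph{is} the algebraic monodromy representation.  The essential tool is the N\'eron model $X_S$ of $A_K$ together with the Chevalley--Rosenlicht decomposition $0 \to L \to X_{S,s}^\circ \to B \to 0$ of the identity component of its special fiber, where $L$ is itself an extension $0 \to T \to L \to U \to 0$ of a torus $T$ by a connected unipotent group $U$.

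The engine of the argument is the specialization map.  Working over the strict Henselization $R^{\mathrm{sh}}$, the N\'eron mapping property yields for every $n$ a $\mathrm{Gal}(K^{\mathrm{ur}}/K)$-equivariant injection
\[
A_K[\ell^n](K^{\mathrm{ur}}) \;\hookrightarrow\; X_{S,s}^\circ(\bar\kappa)[\ell^n],
\]
whose left-hand side is $A_K[\ell^n](\bar K)^I$, and whose right-hand side has order $\ell^{n(2\dim B + \dim T)}$ since $\ell$-power torsion in a smooth commutative group scheme over $\bar\kappa$ is \'etale with $\#B[\ell^n] = \ell^{2n\dim B}$, $\#T[\ell^n] = \ell^{n\dim T}$, and $\#U[\ell^n]=1$.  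Combined with the dimension identity $g = \dim B + \dim T + \dim U$, this already yields the abelian-reduction equivalence.  If $A_K$ extends to an abelian scheme $\mathcal A/S$ then $\mathcal A[\ell^n]\to S$ is finite \'etale, so $I$ acts trivially on $T_\ell A_K$; conversely, trivial monodromy forces the specialization to be an equality of orders $\ell^{2ng}$ for every $n$, hence $\dim T = \dim U = 0$, so $X_S^\circ$ extends $A_K$ and is proper (being an abelian scheme on its special fiber), giving abelian reduction.

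For the semi-abelian equivalence, the forward direction ($U=0$) proceeds by constructing a two-step inertia-stable filtration $0\subset W_{-2}\subset W_{-1}=T_\ell A_K$ whose graded pieces are identified, via the extension $0\to T\to X_{S,s}^\circ\to B\to 0$ and its dual biextension, with $X_\ast(T)\otimes \mathbb Z_\ell(1)$, $T_\ell B$, and $X^\ast(T)\otimes \mathbb Z_\ell$; inertia acts trivially on each graded piece, hence unipotently on $T_\ell A_K$.  The converse requires the rank computation $\mathrm{rk}\,(T_\ell A_K)^I = 2\dim B + \dim T$ coming from the specialization injection, together with the identity $\mathrm{rk}\,(T_\ell A_K)^I = 2g - 2\dim U$ obtained from Grothendieck's orthogonality pairing between $T_\ell A_K$ and $T_\ell \widehat A_K$ applied to a unipotent inertia action; comparing the two expressions for the rank of invariants forces $\dim U = 0$.

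The main obstacle is precisely the last identity.  The forward directions and the abelian-reduction converse reduce essentially to Chevalley's theorem and a count of \'etale $\ell^n$-torsion, but the converse for semi-abelian reduction ultimately depends on Grothendieck's orthogonality theorem, which controls the interplay between the N\'eron models of $A_K$ and $\widehat A_K$ under a unipotent inertia action.  That is where the genuine content of the proposition resides, and any full proof must be built on top of this ingredient rather than around it.
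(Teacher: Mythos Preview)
The paper does not prove this proposition at all; it merely states Grothendieck's result and refers the reader to \cite[Prop.~3.5]{sga7I} and \cite[Thm.~5, p.183]{blr}.  Your sketch follows precisely the argument in those references (N\'eron model, Chevalley decomposition of the special fiber, specialization on the $\ell$-adic Tate module, and Grothendieck's orthogonality theorem for the hard converse), so you are aligned with what the paper cites rather than diverging from it.

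That said, two details deserve tightening.  First, the specialization map lands in $X_{S,s}[\ell^n]$ rather than $X_{S,s}^\circ[\ell^n]$, and is in fact a bijection (not merely an injection); since the component group is finite this is harmless at the level of Tate modules, but it is the bijectivity that yields the rank \emph{equality} $\operatorname{rk}(T_\ell A_K)^I = 2\dim B + \dim T$ you later invoke.  Second, and more substantively, the displayed identity $\operatorname{rk}(T_\ell A_K)^I = 2g - 2\dim U$ in your converse semi-abelian step is not correct as written: combined with the specialization rank $2\dim B + \dim T$ and the relation $g = \dim B + \dim T + \dim U$, it would force $\dim T = 0$ rather than $\dim U = 0$.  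What orthogonality together with unipotence actually yields is that, writing $N = \sigma - 1$ for a topological generator $\sigma$ of the inertia image, one has $\operatorname{im}(N) \subseteq (T_\ell A_K)^t$ (the toric part, of rank $\dim T$): indeed $\langle Nx, y\rangle = -\langle x, \widehat N y\rangle = 0$ for $y \in (T_\ell \widehat A_K)^I$, and the toric part is exactly the annihilator of the fixed part of the dual.  Hence $\operatorname{rk}(T_\ell A_K)^I = 2g - \operatorname{rk}\operatorname{im}(N) \geq 2g - \dim T$, and comparing with $2\dim B + \dim T$ gives $\dim U \leq 0$.  Your identification of orthogonality as the crux is right; only the intermediate numerical identity needs to be replaced by this inequality.
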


We direct the reader also to \cite[Thm.~5, p.183]{blr}.   
The Grothendieck--Mumford Stable Reduction Theorem follows from the proposition and the Monodromy Theorem  after the observation  (see Remark \ref{remqutou}) that  quasi-unipotent  monodromy can be made unipotent after a finite base change.

\subsection{Alexeev's space of stable semiabelic pairs}   
One would like to derive from the Grothendieck--Mumford Stable Reduction Theorem a properness statement for a moduli space.  This serves as motivation to introduce Alexeev's compactification \cite{alexeev} of the moduli space of principally polarized abelian varieties, where such a statement holds.

Let us recall some definitions from \cite{alexeev} (we also direct the reader to Olsson  \cite{olsson} for a related moduli problem).  First a reduced scheme $X$ is said to be \textbf{semi-normal} if given any  proper, bijective morphism $f:X'\to X$  from a reduced scheme $X'$ satisfying  the property that  $\kappa(f(x'))\to \kappa(x')$ is an isomorphism for all $x'\in X$,  then $f$ is an isomorphism (see e.g.~\cite[\S7.2]{kollar}, \cite[1.1.6]{alexeev}).  
For instance, a nodal curve is semi-normal, while a cuspidal curve is not.

A \textbf{stable semiabelic variety} (\cite[1.1.5]{alexeev}) is a semi-normal, equidimensional, reduced scheme  $X$ over an algebraically closed field $k$, together with an action of a connected semi-abelian scheme $G/k$ of the same dimension, such that there are only finitely many orbits for the $G$-action, and the stabilizer group scheme of every point of $X$ is connected, reduced and lies in the toric part of $G$.

A \textbf{polarized stable semiabelic variety} (\cite[1.1.8]{alexeev}) is a projective stable semiabelic variety together with an ample invertible sheaf $L$.  The degree of the polarization is defined as $h^0(L)$.   A \textbf{stable semiabelic pair} $(X,\Theta)$ consists of a polarized stable semiabelic variety $X$ with ample invertible sheaf $L$ together with a section $\theta\in H^0(X,L)$ that does not vanish on any $G$-orbits.   So in total, for a stable semiabelic pair, we have the data $(X,G,L,\theta)$.   We take $\Theta$ to be the zero set of $\theta$, and use the shorter notation $(X,\Theta)$ to indicate the connection to polarized abelian varieties.

We now make the relative definition.  For a scheme $B$, a \textbf{stable semiabelic pair over $B$}, denoted  $(X_B,\Theta_B)$, is the data $$(X_B,G_B,L_B,\theta_B)$$ where $X_B\stackrel{\pi_B}{\to} B$ is a projective, flat morphism, $G_B$ is a semi-abelian scheme over $B$ acting on $X_B$, $L_B$ is a relatively ample line bundle on  $X_B$, $\theta_B\in H^0(B,\pi_*L_B)$, and the restriction of this data to every geometric point $\bar b\to B$  is a stable semiabelic pair over $\bar b$. (\cite[p.617]{alexeev}).  One can show that $\pi_*L_B$ is locally free and that this push forward commutes with arbitrary base change.  The degree is defined to be the rank of $\pi_*L_B$.

 For brevity, we do not give a precise definition of Alexeev's stack $\bar{\mathcal A}_{g}^{A}$, and say only that it is a substack of the stack of all stable semiabelic pairs of degree $1$ and dimension $g$.   The stack $\bar{\mathcal A}_{g}^{A}$ contains a component that has ${\mathcal A}_{g}$, the moduli stack of principally polarized abelian varieties of dimension $g$,  as a dense open substack.
 Alexeev proves \cite[Thm~5.10.1]{alexeev} that   $\bar{\mathcal A}_{g}^{A}$ is a proper, algebraic (Artin) stack over $\mathbb Z$ with finite diagonal.   Moreover, the stack admits a coarse moduli space, with a component that has normalization isomorphic to the second voronoi compactification $\bar A_g^{Vor}$ \cite[Thm.~5.11.6, p. 701]{alexeev}.  To establish properness, 
 Alexeev proves the following stable reduction theorem for semiabelic pairs.

\begin{teo}[Alexeev {\cite[Thm.~5.7.1, p.692]{alexeev}}] Let $S=\operatorname{Spec} R$ be the spectrum of a DVR with fraction field $K$.  Let $(X_K,\Theta_K)$ be a stable semiabelic pair over $K$.   Then there is a finite extension $K'$ of $K$, so that taking $R'$ to be the integral closure of $R$ in $K'$  and setting $S'=\operatorname{Spec}R'$, 
there exists a stable semiabelic pair $(X_{S'},\Theta_{S'})$ over $S'$ extending the pull-back $(X_{K'},\Theta_{K'})$.
Morover, the extension $(X_{S'},\Theta_{S'})$ is unique up to isomorphism.  
\end{teo}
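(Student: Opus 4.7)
The plan is to reduce the problem to the Grothendieck--Mumford stable reduction theorem for the acting group, construct a candidate extension of the pair from a semi-stable model by a canonical model procedure determined by the polarization, verify the stable semiabelic conditions on the central fibre by a toric/combinatorial analysis, and then argue uniqueness via a separatedness argument grounded in the semi-normality hypothesis.

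First I would handle the group. After a finite extension $K'/K$, I can arrange that the semi-abelian group $G_K$ acting on $X_K$ extends to a semi-abelian scheme $G_{S'} \to S'$ over the integral closure $R'$ of $R$ in $K'$: apply Grothendieck--Mumford to the abelian quotient of $G_K$, and use the elementary fact that a $K$-torus splits after a finite separable extension and thereafter extends canonically to $S'$. The action on $X_K$ will be constructed together with the extension of $X_K$. Next I would produce a proper flat extension of $X_K$: the Semi-stable Reduction Theorem (Theorem~\ref{teossr}), after absorbing a further finite base change into $K'$, gives a semi-stable model $\widetilde X \to S'$; extend $L_K$ to a relatively ample line bundle $\widetilde L$ on $\widetilde X$ and set $X_{S'} = \operatorname{Proj}_{S'} \bigoplus_{n\ge 0} \pi_*(\widetilde L^{\otimes n})$. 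After rescaling by a uniformizer to normalize the generic vanishing behaviour, the section $\theta_K$ extends uniquely to a section $\theta_{S'}$; its specialization to the central fibre, which by hypothesis does not vanish on any $G_K$-orbit, picks out the correct component of the Proj. This non-vanishing condition is precisely what selects a single limit among the many possible proper completions of $X_K$.

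The hard part will be verifying that this candidate is actually a stable semiabelic pair: that $X_{S',s}$ is semi-normal, equidimensional and reduced, that the induced action of $G_{S',s}$ has only finitely many orbits, and that all stabilizers are connected, reduced, and lie in the toric part of $G_{S',s}$. Semi-normality can be enforced by passing to the semi-normalization, and thanks to the toroidal-without-self-intersection description of the semi-stable model (see Remark~\ref{remssrexa}), the structure of $X_{S',s}$ can be analyzed combinatorially in terms of lattice pavings acted on by the toric part of $G_{S'}$. The orbit and stabilizer conditions then become statements about these pavings that can be checked directly; this combinatorial bookkeeping is where the bulk of Alexeev's machinery lives, and where the proof can genuinely fail if one does not impose the semi-normality and stabilizer hypotheses in the right form.

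Uniqueness up to isomorphism then follows from a separatedness argument. Given two extensions, the identity on the generic fibre extends, after a further finite base change, to a birational $S'$-rational map respecting the $G_{S'}$-actions and the theta sections. Because both central fibres are semi-normal and because $\theta_s$ is non-vanishing on orbits, any such map is forced to be an isomorphism: there is no room inside the central fibre to contract or expand a subscheme without either destroying semi-normality or causing $\theta_s$ to vanish on an orbit. This pins the extension down up to unique isomorphism and completes the plan.
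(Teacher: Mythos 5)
The survey does not prove this theorem; it only states it and cites Alexeev, so there is no in-paper argument to compare against. Judged on its own terms, your plan has the right overall silhouette (extend the semiabelian group by Grothendieck--Mumford, let the theta section select the limit, analyze the central fibre via pavings, get uniqueness from rigidity of that combinatorial data), and you are honest that the combinatorics is where the real work lies. But the route you propose to the candidate model has a genuine gap. The Semi-stable Reduction Theorem (Theorem~\ref{teossr}) requires the generic fibre to be \emph{smooth} and works in characteristic $0$, whereas $(X_K,\Theta_K)$ is only assumed to be a stable semiabelic pair: $X_K$ may be reducible, non-normal (merely semi-normal), and the result is needed over an arbitrary DVR. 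So you cannot feed $X_K$ into Theorem~\ref{teossr} at all. Relatedly, ``extend $L_K$ to a relatively ample $\widetilde L$ and take $\operatorname{Proj}$'' begs the question: which extension? The entire content of the theorem is that among the many flat proper completions there is exactly one with the required structure, and the assertion that the non-vanishing of $\theta$ on orbits ``picks out the correct component of the Proj'' is not a mechanism. In Alexeev's actual argument the limit is constructed from degeneration data in the sense of Mumford and Faltings--Chai: the valuations of the Fourier coefficients of $\theta_K$ define a height function whose lower convex hull produces the paving, and the central fibre is assembled from that paving; the $\operatorname{Proj}$ of a geometric semi-stable model plays no role.

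Two further steps would fail as written. First, ``semi-normality can be enforced by passing to the semi-normalization'' is not available inside a flat family: semi-normalizing the total space or the special fibre can change the special fibre's Hilbert polynomial or destroy flatness, so semi-normality of the limit has to come out of the construction (it does, from the paving description), not be imposed afterwards. Second, the uniqueness argument (``there is no room to contract or expand without destroying semi-normality or making $\theta$ vanish on an orbit'') is a heuristic, not a proof; what actually pins down the limit is that the degeneration data, and hence the paving and the central fibre, are determined by $(X_{K'},\Theta_{K'})$ itself. You would need to supply that identification, or some substitute separatedness criterion, to close the argument.
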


\begin{rem}
As a consequence, the central fiber $(X_{s'},\Theta_{s'})$ of  $(X_{S'},\Theta_{S'})$ is determined up to isomorphism by $(X_K,\Theta_K)$. 
\end{rem}

\begin{exa}
Assume the residue field of $R$ is algebraically closed.  
Consider a family $C\to S$ of smooth curves   degenerating to an irreducible, stable curve $C_s$ with a single node.  Let $C_s^\nu$ be the normalization of $C_s$, and assume that $C_s$ is obtained from $C_s^\nu$ by attaching points $p,q\in C_s^\nu$.    The family of curves $C_K$ over $K$ determines a principally polarized abelian scheme $(X_K,\Theta_K)$, where  $X_K=JC_K$ is the Jacobian of the curve.   Let $(X_{S'},\Theta_{S'})$ be a stable reduction of $X_K$. 

 The central fiber can be described as follows.   The degree zero Picard functor applied to $C_S$ determines a semi-abelian scheme over $S$.  The central fiber is the semi-abelian scheme 
$$
0\to \mathbb G_m\to G\to JC_s^\nu \to 0
$$ 
determined by the data of the line bundle $\mathscr O_{C_s^\nu}(p-q)$.  The group $G$ can be completed to a $\mathbb P^1$-bundle over $JC_s^\nu$, with sections $\sigma_0$ and $\sigma_\infty$.  The fiber $X_{s'}$ is obtained from this projective bundle by gluing the sections transversally, after shifting by $\mathscr O_{C_s^\nu}(p-q)$. Note that  $G$ acts on this space.    We direct the reader to \cite{alexeevtor} for more details, as well as a description of the polarization (see also \cite{mumford83}).
\end{exa}

\section{Curves} \label{seccurves}

In this section we consider stable reduction for curves.  
We begin with the Deligne--Mumford Stable Reduction Theorem.  
The main focus is on reviewing the connection between stable reduction for curves and stable reduction for abelian varieties.  We also review a well known proof in characteristic zero using the semi-stable reduction theorem, to motivate work of Kollar--Shepherd-Barron--Alexeev, discussed later.  Finally, we consider some recent  ``alternate'' stable reduction theorems for curves, which have connections to the Hassett--Keel program.

\subsection{Deligne--Mumford stable reduction}
Recall that a stable curve $X$ over an algebraically closed field is a pure dimension $1$, reduced, connected, complete scheme of finite type, with at worst nodes as singularities, and with finite automorphism group.   The genus is defined as $g=h^1(X,\mathscr O_X)$.   For a scheme $B$, a stable curve $X/B$ is a proper, flat morphism $X\to B$ whose geometric fibers are stable curves.

\begin{teo}[Deligne--Mumford Stable Reduction \cite{dm}] Let $S=\operatorname{Spec} R$ be the spectrum of a DVR with fraction field $K$.  Let $X_K$ be a stable curve over $K$.  
 Then there is a finite extension $K'$ of $K$, so that taking $R'$ to be the integral closure of $R$ in $K'$  and setting $S'=\operatorname{Spec}R'$, 
there exists 
 a stable curve $X_{S'}$ over $S'$ extending the pull-back $X_{K'}$.
Morover, the extension $X_{S'}$ is unique up to isomorphism.   
\end{teo}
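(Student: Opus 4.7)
The strategy I propose is the classical Deligne--Mumford approach: deduce the theorem from the Grothendieck--Mumford Stable Reduction Theorem applied to the Jacobian $JX_K$, using Raynaud's theorem (Remark \ref{remray}) as the bridge between the semi-abelian nature of the central fiber of the N\'eron model of $JX_K$ and the nodal geometry of a model of $X_K$. This has the advantage of working in all characteristics and of making transparent the link with the previous section.

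For existence, I would first produce any proper flat extension $\mathcal X_S \to S$ of $X_K$ (taking, for instance, the schematic closure of $X_K$ under an embedding into projective space), and then, by resolution of singularities for surfaces, replace $\mathcal X_S$ by a regular model. Now apply Grothendieck--Mumford to $JX_K$ to obtain a finite extension $K'/K$ such that $JX_{K'}$ has semi-abelian reduction over $R'$. Over $S' = \operatorname{Spec} R'$, after further blowing up, I would arrange that the special fiber is a reduced curve with only nodes as singularities: in characteristic zero this is provided directly by the Semi-Stable Reduction Theorem \ref{teossr}, while in general one uses that, via Raynaud's description of the N\'eron model component as a relative $\mathbf{Pic}^0$, the semi-abelian property of the Jacobian forces the special fiber of an appropriately chosen regular model to be nodal (with the toric rank matching the first Betti number of the dual graph). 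This yields a semistable family $\tilde X_{S'} \to S'$. Finally, contract the unstable components of the special fiber---smooth rational curves meeting the rest of the fiber in at most two points---to obtain the desired stable family $X_{S'} \to S'$.

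For uniqueness, I would argue that any $K'$-isomorphism between the generic fibers of two stable models $X_{S'}$ and $Y_{S'}$ extends to an $S'$-isomorphism. The induced birational map $X_{S'} \dashrightarrow Y_{S'}$ can be resolved by a sequence of blowups producing a common regular semistable model dominating both. Because the contraction from a semistable model to the stable model is canonical---one contracts exactly the chains of smooth rational curves meeting the rest of the fiber in at most two points, with self-intersection $-2$---the two stable models must agree. Equivalently, one may observe that the stable model is the relative log canonical model of any semistable model, which is unique by basic minimal model theory for surfaces.

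The main obstacle is the passage from the semi-abelian reduction of $JX_K$ to the existence of a nodal model of $X_K$. Raynaud's theorem as stated gives one direction (nodal model $\Rightarrow$ semi-abelian N\'eron component equal to $\mathbf{Pic}^0$), and what is needed here is essentially the converse. In characteristic zero this can be sidestepped by invoking Theorem \ref{teossr} directly, but in positive or mixed characteristic the content is genuinely non-trivial: one must match the combinatorial data of the semi-abelian scheme (character lattice of the toric part, plus the polarization on the abelian quotient) with the dual graph of the hoped-for nodal special fiber. This geometric-to-combinatorial transfer is where the delicate work lies and where the deeper results of Raynaud on relative Picard functors for families of curves are used.
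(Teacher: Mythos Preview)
Your proposal is correct and follows essentially the same approach as the paper: existence via the Grothendieck--Mumford theorem applied to $JX_K$ together with the Deligne--Mumford bridge between semi-abelian reduction of the Jacobian and nodal models (the paper packages this as Theorem~\ref{teodmII}, with Raynaud's result as the key input), and uniqueness via the relative canonical model, exactly as in the paper's characteristic-zero sketch. You have also correctly identified the genuine content of the argument, namely the converse implication ``semi-abelian reduction of $JX_K$ $\Rightarrow$ existence of a nodal model,'' which is precisely the nontrivial direction of Theorem~\ref{teodmII} and the part that cannot be sidestepped outside characteristic zero.
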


\begin{rem}
As a consequence, the central fiber $X_{s'}$ of $X_{S'}$ is determined up to isomorphism by $X_K$.
\end{rem}

In characteristic $0$, the theorem is due to Mayer--Mumford \cite{woodshole}.  It appears that semi-stable reduction for curves in characteristic $0$ was well known for some time (e.g.~\cite[p.VIII]{mumetal}).  
The class of stable curves was then defined in \cite[Def., p.7]{woodshole} (see also \cite[p.228]{git}), and the properness of the associated moduli space was asserted there.  The properness is equivalent to the stable reduction theorem, which one establishes from the existence of a semi-stable reduction,  and the birational geometry of surfaces (we outline a well known argument below).

 In positive characteristic, the first proof was due to Deligne--Mumford \cite{dm},  and was made via the stable reduction theorem for abelian varieties.  
The outline of this proof is as follows.  Take $X_K$ smooth for simplicity and consider the  Jacobian $J_K/K$.  The Grothendieck--Mumford Stable Reduction theorem implies this extends to a family of semi-abelian varieties, at least after a finite base change.  To complete the proof, it is then  shown:

\begin{teo}[Deligne--Mumford {\cite[Thm.~2.4]{dm}}] \label{teodmII} 
A family of stable curves $X_K/K$ extends to a family of stable curves over $S$ if and only if the associated family of Jacobians $J_K/K$ has semi-abelian reduction over $S$.
\end{teo}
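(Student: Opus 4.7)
The plan is to prove the two implications separately, with Raynaud's theorem (Remark \ref{remray}) serving as the bridge between curves and their Jacobians in both directions. The key dictionary is that, for a regular model of $X_K$ whose central fiber is a nodal curve, the connected component of the N\'eron model of $J_K$ is computed by the relative Picard scheme of the model. This will let us translate properties of the central fiber of the curve into properties of the central fiber of the N\'eron model, and vice versa.

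For the forward direction $(\Rightarrow)$, suppose $X_S \to S$ is a family of stable curves extending $X_K$. The total space $X_S$ has only (very mild) surface singularities, of the form $xy = t^n$, at the nodes of the central fiber. Resolving these by repeatedly blowing up the singular points, I obtain a regular model $\tilde X_S \to S$ whose central fiber $\tilde X_s$ is the stable curve $X_s$ with chains of rational $(-2)$-curves inserted at each node. In particular $\tilde X_s$ is a reduced nodal curve. By Raynaud's theorem, the connected component of the N\'eron model of $J_K$ is $\mathbf{Pic}^0_{\tilde X_S/S}$, whose special fiber is $\mathbf{Pic}^0(\tilde X_s)$. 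Using the standard exact sequence coming from the normalization $\nu: \tilde X_s^\nu \to \tilde X_s$, this Picard scheme is an extension of $\prod_i \operatorname{Jac}(\tilde X_{s,i}^\nu)$ by a torus built from the cycles in the dual graph, hence is semi-abelian. So $J_K$ has semi-abelian reduction.

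For the reverse direction $(\Leftarrow)$, I start from any proper flat model of $X_K$ over $S$ and resolve singularities to produce a regular model $\tilde X_S \to S$; this is available in all characteristics by surface desingularization. Applying Raynaud's theorem again identifies the connected component of the N\'eron model of $J_K$ with $\mathbf{Pic}^0_{\tilde X_S/S}$, so the semi-abelian hypothesis forces the special fiber $\mathbf{Pic}^0(\tilde X_s)$ to be semi-abelian, i.e., to have trivial unipotent radical. Now I analyze the structure of $\mathbf{Pic}^0$ of a proper connected curve: non-reduced components (with multiplicity $>1$) and singularities worse than nodes (cusps, tacnodes, etc.) each contribute an explicit $\mathbb G_a$-factor to the unipotent radical, visible from the conductor sequence of $\nu_\ast \mathscr O_{\tilde X_s^\nu}/\mathscr O_{\tilde X_s}$ and from the structure sheaf sequences on thickenings. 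Vanishing of the unipotent radical thus forces $\tilde X_s$ to be reduced with at worst nodal singularities. Once this is achieved, I pass to a minimal regular model by contracting $(-1)$-curves in the central fiber, and then contract the remaining unstable components (smooth rational curves meeting the rest of $\tilde X_s$ in at most two points) to obtain the desired stable family $X_S/S$; these contractions exist by the classical theory of birational geometry of arithmetic surfaces.

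The main obstacle is the step in the reverse direction where one must show that the semi-abelian hypothesis on $\mathbf{Pic}^0(\tilde X_s)$ truly forces $\tilde X_s$ to be reduced and nodal. The forward direction is essentially a direct computation, but going backwards requires a precise identification of the unipotent radical of the Picard scheme of an arbitrary proper curve (potentially non-reduced, with arbitrary singularities) and a verification that each pathology contributes nontrivially to it. This is where the characteristic-free nature of the argument is most delicate, since one cannot appeal to semi-stable reduction of surfaces \cite{mumetal} as a shortcut. Once this structural result is in place, the final contraction argument is standard and the uniqueness of the stable extension follows from the separatedness part of the valuative criterion applied to $\overline{\mathcal M}_g$.
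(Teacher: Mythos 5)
Your overall strategy is the right one, and it is the strategy Deligne--Mumford actually follow; the survey itself does not prove this theorem but only cites \cite[Thm.~2.4]{dm} and observes that the forward implication ``essentially follows directly from Raynaud's result.'' Your forward direction is essentially complete: resolving the $xy=t^n$ singularities of the stable model yields a regular model with reduced nodal special fiber, the version of Raynaud's theorem quoted in Remark \ref{remray} applies, and $\mathbf{Pic}^0$ of a reduced nodal curve is visibly semi-abelian.

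The reverse direction, however, has a genuine gap, and it sits exactly where the content of the theorem lies. First, the form of Raynaud's theorem stated in Remark \ref{remray} applies only to a regular model whose special fiber is already a reduced nodal curve; to apply it to an arbitrary regular model $\tilde X_S$ (whose special fiber may be non-reduced and badly singular) you need the stronger form of Raynaud's theorem, which requires cohomological flatness of $\tilde\pi$ in degree $0$ (guaranteed, e.g., when the gcd of the multiplicities of the components of $\tilde X_s$ is $1$); this hypothesis must be verified or arranged and is not automatic. Second, and more seriously, the claim that every pathology of $\tilde X_s$ --- a component of multiplicity $>1$, or a singularity of $(\tilde X_s)_{\mathrm{red}}$ worse than a node --- ``contributes an explicit $\mathbb G_a$-factor to the unipotent radical'' is precisely the hard structural lemma of \cite[Thm.~2.4]{dm}, and you assert it rather than prove it (as you acknowledge). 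The two sources of unipotence behave differently: the kernel of $\mathbf{Pic}^0(\tilde X_s)\to\mathbf{Pic}^0((\tilde X_s)_{\mathrm{red}})$ is a successive extension of the vector groups $H^1((\tilde X_s)_{\mathrm{red}},\mathscr I^k/\mathscr I^{k+1})$, which can individually vanish, so one must argue globally (using $p_a(\tilde X_s)=g$ and adjunction on the regular surface) that non-reducedness forces this kernel to be positive-dimensional; for the reduced fiber, the local contribution at a singular point $y$ has unipotent dimension $\delta_y-b_y+1$, which vanishes exactly for seminormal singularities, and one then needs regularity of the total space (hence planarity of the fiber singularities) to upgrade ``seminormal'' to ``node.'' Finally, the theorem concerns a \emph{stable}, not necessarily smooth, $X_K$, so both directions must also treat a nodal generic fiber (handled in \cite{dm} by passing to the normalization of the generic fiber and tracking the resulting torus part of $J_K$), which your argument does not address. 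As an outline of the Deligne--Mumford proof your proposal is faithful, but the step you flag as ``the main obstacle'' is the theorem.
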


A key point of the proof is  the result of Raynaud's mentioned in Remark \ref{remray}.  
The fact that stable reduction for the family of curves implies stable reduction for the family of Jacobians essentially follows directly from Raynaud's result.  Using this half of Theorem \ref{teodmII}, Mumford observed \cite[p.75]{dm}  that  the stable reduction theorem for  abelian varieties  can be deduced from  the stable reduction theorem for curves.

We direct the reader to \cite[p.182]{blr} for a more detailed discussion.   
The outline of the argument is as follows.  An abelian scheme $A_K$ can be viewed as the quotient of a product of Jacobians; i.e. 
$$
0\to A_K'\to J_K\to A_K\to 0
$$
where $A_K'$ is an abelian scheme, and $J_K$ is a product of Jacobians (e.g.~Serre \cite[Cor.~p.180]{serregroupes}).  One then shows that in general, for such an extension of abelian schemes, $J_K$ has semi-abelian reduction if and only if $A_K$ and $A_K'$ do, completing the proof.    
Finally, we note that 
Artin--Winters have given another  proof of stable reduction  for curves in positive characteristic, which does not rely on the stable reduction theorem for abelian varieties \cite{artinwinters}.

\begin{rem}\label{remtrivmon}  \ 
While unipotent monodromy for a one-parameter family of smooth curves implies the family extends to a family of stable curves, trivial monodromy does not necessarily imply that the central fiber of the extension is smooth.  For instance, a one-parameter family of smooth curves degenerating to a singular, stable curve of compact type will have associated   Jacobian that is an abelian scheme over the base.  Grothendieck's theorem implies that the monodromy of the family will be trivial.  We direct the reader to Oda  \cite[Thm.~10]{oda93} for a statement concerning a related monodromy invariant that detects when a one-parameter family can be extended to a smooth curve.  
\end{rem}

\begin{rem}
There is a stable reduction theorem for pointed curves as well.   
Pointed curves provide a natural introduction to  the important topic of  moduli spaces of pairs.  For the sake of brevity, we have generally avoided this topic in the presentation here.  It will, however, be of central importance in \S \ref{seckol} on slc models, and it is worth noting this example as a precursor.    
\end{rem}


\subsubsection{Stable reduction in characteristic $0$} 
To motivate some of the other examples considered in this survey (following the approach of Koll\'ar--Shepherd-Barron--Alexeev), it is instructive to sketch a  proof of a special case of the stable reduction theorem for curves in characteristic $0$, using the semi-stable reduction theorem.   The goal is to emphasize the role of semi-stable reduction  and relative canonical models.

\begin{proof}[Sketch of stable reduction for curves in characteristic $0$]  \ \ \  \ \  For simplicity we consider the case of a  smooth family of curves $$\pi_K:X_K\to \operatorname{Spec}K$$  (of genus $g$) over the generic point of $S=\operatorname{Spec}R$, the spectrum of a DVR.    Complete this to a family of schemes $\pi:X\to S$.  Applying the semi-stable reduction theorem, one obtains  after a finite base change a family of nodal curves $\pi':X'\to S'$.  
The relative canonical model $$\operatorname{Proj}_{S'} \bigoplus_n \pi'_*\left(\omega_{X'/S'}^{\otimes n}\right)\to S'$$   is a family of stable curves extending the pull back of $X_K$.  Let us denote this by $\pi^c:X^c \to S'$.    
Note that the relative canonical model of  $X^c /S'$ is again $X^c /S'$.   (In short, we established the valuative criterion \eqref{eqnvcp}.)

Let us now show that the extension is unique up to isomorphism (i.e.~that the valuative criterion of separateness  \eqref{eqnvcs} holds).
 To do this,  
suppose there were two stable reductions $\pi^c_1:X^c _1\to S'$ and $\pi^c_2:X^c _2\to S'$.   
The surfaces $X^c _1$ and $X^c _2$ are birational by construction.  The claim is they are in fact isomorphic over $S'$.   
 We outline the following standard proof of this in  order to motivate similar statements in other settings.

Resolving the singularities of the surfaces, resolving the resulting birational map of smooth surfaces, and applying the Semi-stable Reduction Theorem again if necessary, we may assume there is a diagram:
$$
\xymatrix
{
&Z \ar@{->}[ld]_{\phi_1} \ar@{->}[rd]^{\phi_2}&\\
X^c _1 \ar@{->}[rd]_{\pi^c_1}&&X^c _2 \ar@{->}[ld]^{\pi^c_2}\\
&S'&\\
}
$$  
where $\phi_1,\phi_2$ are  sequences of blow-ups,  $Z$ is a smooth surface, and  $Z/S'$ is a family of nodal curves.  One can show that $(\pi^c_{1})_*\omega_{X^c _1/S'}^{\otimes n} \cong (\pi^c_1\circ \phi_1)_*\omega_{Z/S'}^{\otimes n}$  (see e.g.~\cite[Ex.~3.108, p.156, p.84]{hm}) and similarly for the other side of the diagram. 
It follows that 
$$
(\pi^c_{1})_*\omega_{X^c _1/S'}^{\otimes n} \cong (\pi^c_1\circ \phi_1)_*\omega_{Z/S'}^{\otimes n}\cong (\pi^c_2\circ \phi_2)_*\omega_{Z/S'}^{\otimes n}\cong (\pi^c_{2})_*\omega_{X^c _2/S'}^{\otimes n}.
$$
Thus the relative canonical models of $X^c _1/S'$ and $X^c _2/S'$ agree, so in fact $X^c _1$ and $X^c _2$ are isomorphic over $S'$. 
\end{proof}

\subsection{Other stable reduction theorems for curves} \label{secaltsrc}

Recently, especially in connection with  the Hassett--Keel program (see \S \ref{secade}), there has been interest in understanding alternate compactifications of (open subsets of) $M_g$.  From the perspective of stacks, this is the question of determining alternate proper stacks $\overline {\mathcal M}_g^{Alt}$ that contain (open substacks of) $\mathcal M_g$ as an open substack.    From the perspective of stable reduction, this is the problem of defining classes of curves for which a stable reduction theorem holds.  We direct the reader to Smyth \cite{smyth09} for more details (see also \cite{asv10},\cite{as12} where a notion weaker than properness is considered). Here we consider Schubert's space of pseudo-stable curves.

We recall the definitions from \cite{schubert} (see also \cite{hh}).  
A pseudo-stable curve $X$ over an algebraically closed field $k$ is a pure dimension $1$, reduced, connected, complete scheme of finite type, with at worst nodes and cusps as singularities, such that the canonical line bundle is ample and every sub-curve of genus $1$ meets the rest of the curve in at least two points  (\cite[Def.~p.297]{schubert}, \cite[p.4473]{hh}).    
We note that for $g\ge 3$, pseudo-stable curves have finite automorphism groups (\cite[p.312]{schubert}).  
The genus is defined as $g=h^1(X,\mathscr O_X)$.  
 For a scheme $B$, a pseudo-stable curve $X/B$ is a proper, flat morphism $X\to B$ whose geometric fibers are pseudo-stable curves. 
 
There is the following stable reduction theorem for this class of curves.

\begin{teo}[Pseudo-Stable Reduction \cite{schubert}] Let $S=\operatorname{Spec} R$ be the spectrum of a DVR with fraction field $K$.  Let $X_K$ be a genus $g\ge 3$, pseudo-stable curve over $K$.  
 Then there is a finite extension $K'$ of $K$, so that taking $R'$ to be the integral closure of $R$ in $K'$  and setting $S'=\operatorname{Spec}R'$, 
there exists 
 a pseudo-stable curve $X_{S'}$ over $S'$ extending the pull-back $X_{K'}$.
Morover, the extension $X_{S'}$ is unique up to isomorphism.   
\end{teo}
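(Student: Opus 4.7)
The plan is to reduce to the Deligne--Mumford Stable Reduction Theorem. After replacing $K$ by a suitable finite extension $K'$, I would first invoke Deligne--Mumford to obtain a stable extension $X^{ds}_{S'}\to S'$ of $X_{K'}$. The central fiber $X^{ds}_{s'}$ is a stable curve and differs from a pseudo-stable one only in possibly containing \emph{elliptic tails}, i.e.~arithmetic genus $1$ subcurves meeting the rest of the curve at a single node. The task is then to construct a proper, birational $S'$-morphism $\varphi\colon X^{ds}_{S'}\to X^{ps}_{S'}$ that contracts each elliptic tail in $X^{ds}_{s'}$ to a cusp and is an isomorphism elsewhere.

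To build $\varphi$, the cleanest approach is via a relative log canonical model. Let $E\subset X^{ds}_{S'}$ denote the reduced divisor consisting of all the elliptic tails appearing in the central fiber. I would consider the graded $\mathscr O_{S'}$-algebra
$$
\mathcal R \;=\; \bigoplus_{n\geq 0}\pi_{*}\!\left(\omega_{X^{ds}_{S'}/S'}^{\otimes n}\otimes\mathscr O_{X^{ds}_{S'}}(-\lfloor cn\rfloor E)\right),
$$
for a rational weight $c>0$ chosen so that the twisted sheaf has degree zero on each elliptic tail component while remaining positive on every other component of the central fiber. Setting $X^{ps}_{S'}:=\operatorname{Proj}_{S'}(\mathcal R)$ yields the candidate: the elliptic tails, being annihilated by the linear system, are contracted, while all other components are preserved. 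Alternatively, since each elliptic tail has negative self-intersection as a divisor in the surface $X^{ds}_{S'}$, one can realize the contraction directly via Artin's contractibility theorem.

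The hard part is the local analytic calculation verifying that the resulting central fiber really is pseudo-stable. Specifically one must check: (i) the singularity produced on $X^{ps}_{s'}$ at the image of a contracted elliptic tail is analytically an ordinary cusp $y^2=x^3$, rather than a more degenerate plane-curve singularity; (ii) the contracted fiber still has ample canonical bundle; and (iii) no new ``rogue'' genus-$1$ subcurve meeting the rest in a single point has appeared. For (i) one works in a formal neighborhood of the node of the stable model where the elliptic tail is attached and computes the image of this neighborhood under the morphism associated to $\mathcal R$. The hypothesis $g\geq 3$ enters in (ii): it guarantees that the component onto which the new cusp is glued has enough positive geometry that ampleness of the canonical bundle survives the contraction. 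Flatness of $X^{ps}_{S'}\to S'$ follows by construction as a relative $\operatorname{Proj}$.

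For uniqueness, the argument is parallel to the characteristic-zero proof of Deligne--Mumford stable reduction sketched earlier in the excerpt. Given two pseudo-stable extensions $X^{ps}_1,X^{ps}_2$ of $X_K$, they are birational over $S'$; passing to a common modification $Z$ dominating both (say, resolution of singularities of the $X^{ps}_i$ followed by resolution of the induced birational map), the ampleness of $\omega_{X^{ps}_i/S'}$ allows each $X^{ps}_i$ to be recovered as $\operatorname{Proj}_{S'}\bigoplus_n (\pi^{ps}_i)_{*}\omega^{\otimes n}_{X^{ps}_i/S'}$. An identity of the form $(\pi^{ps}_i)_{*}\omega^{\otimes n}_{X^{ps}_i/S'}\cong(\pi^{ps}_i\circ\phi_i)_{*}\omega^{\otimes n}_{Z/S'}$, analogous to the one used in the earlier sketch, shows that both Proj's compute the same graded ring, forcing $X^{ps}_1\cong X^{ps}_2$.
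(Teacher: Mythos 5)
The paper does not prove this theorem itself: it defers to Schubert's GIT construction (the Chow quotient of tri-canonically embedded curves has no strictly semistable points, so projectivity of the quotient plus separatedness yields properness of $\overline{\mathcal M}_g^{ps}$, in the spirit of \S\ref{secsrgit}--\ref{secgitgieseker}) and to the stack-theoretic translation in \cite{hh}. Your route --- Deligne--Mumford stable reduction followed by contraction of elliptic tails --- is a genuinely different and in principle viable alternative; it is essentially the mechanism behind the morphism $\overline{\mathcal M}_g\to\overline{\mathcal M}_g^{ps}$ in the Hassett--Keel program, and Hassett's local analysis (\S\ref{sechassett}) confirms that the tail of an $A_2$ singularity is exactly an elliptic curve marked at a Weierstrass point. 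However, as written your argument has three concrete gaps.

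First, $X_{K}$ is only assumed pseudo-stable, so it may already have cusps, in which case it is not a stable curve and the Deligne--Mumford theorem does not produce a stable extension ``of $X_{K'}$.'' You must first replace $X_{K'}$ by a stable curve over a finite extension (normalize at the cusps and glue on elliptic tails), take its stable limit, and then contract the elliptic tails \emph{fiberwise in the whole family} --- i.e.~$E$ must include the closures of the tails attached to the generic fiber, not only the tails ``appearing in the central fiber'' --- or else the generic fiber of $X^{ps}_{S'}$ will not be $X_{K'}$. Second, the sign of your twist is wrong: an elliptic tail $E$ in the central fiber satisfies $E^2=-1$ and $\deg(\omega_{X^{ds}_{S'}/S'}|_E)=1$, so $\omega^{\otimes n}\otimes\mathscr O(-\lfloor cn\rfloor E)$ has degree $n+\lfloor cn\rfloor>0$ on $E$ for every $c>0$; no choice of $c>0$ kills the degree. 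The correct contracting bundle is $\omega_{X^{ds}_{S'}/S'}(E)$ (degree $1+E^2=0$ on $E$, still positive elsewhere). Third, your uniqueness argument breaks precisely where the theorem differs from the Deligne--Mumford case: if $X^{ps}_i$ has a cuspidal central fiber, then the identity $(\pi^{ps}_i)_*\omega^{\otimes n}_{X^{ps}_i/S'}\cong(\pi^{ps}_i\circ\phi_i)_*\omega^{\otimes n}_{Z/S'}$ is false, because the relative canonical model of the semistable model $Z$ is the Deligne--Mumford stable model (with elliptic tails restored), not $X^{ps}_i$. Uniqueness must instead be argued with the twisted (log canonical) bundle, or by the separatedness argument of \cite[\S 2]{hh}. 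Finally, a small point: the role of $g\ge 3$ is not that ampleness survives the contraction (it does for $g=2$ as well) but that pseudo-stable curves of genus $\ge 3$ have finite automorphism groups, which is what separatedness and the uniqueness statement require.
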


For details we refer the reader to \cite[\S 2]{hh} where the results in \cite{schubert} are translated into the language of stacks.
To be precise,  for each $g\ge 3$, define a stack $\overline {\mathcal M}_g^{ps}$ whose objects are families of genus $g$, pseudo-stable curves, and whose morphisms are given by pull-back diagrams.  The results in  \cite{schubert}  establish that 
 $\overline{\mathcal M}_g^{ps}$ is a proper DM stack.


\section{Stable reduction in higher dimensions}
\label{secsrhd}

We now consider the problem of stable reduction for higher dimensional varieties.  
In general, determining a proper moduli stack, or even a separated moduli stack, is quite difficult.
The literature on this topic is vast, and we direct the reader to Viehweg \cite{viehweg95}, Alexeev \cite{alexeev96, alexeev96b}, Koll\'ar--Shepherd-Baron \cite{KSB} and Koll\'ar \cite{kollar90,kollar11,kollarbook}.

The  focus of this section is to outline the approach of Koll\'ar--Shepherd-Barron--Alexeev (KSBA). The basic idea is to utilize relative log-canonical models of the semi-stable reduction to obtain the  ``stable'' reduction.  
We discuss recent results of Koll\'ar \cite{kollar11} in \S \ref{seckol} that extend the results cited above to give a stable reduction theorem for  canonically polarized varieties of any dimension.  In \S\ref{secnkd} we discuss an example indicating a few of the difficulties that  arise for varieties with negative kodaira dimension, and in  \S\ref{seck3} we  discuss the case of  $K3$ surfaces, for which a stable reduction theorem is not known.

\subsection{Negative Kodaira dimension} \label{secnkd}

It has long been understood that moduli spaces of non-canonically polarized schemes  are in general,  poorly behaved.  In particular, in this subsection, we consider the case of varieties with negative Kodaira dimension.

One of the immediate problems that arises is the existence of varieties with non-discrete, affine automorphism groups.  This immediately precludes the separateness of any moduli stack containing such varieties (since the diagonal of the stack could not be proper).   We direct the reader to Kov\'acs \cite[\S 5.D]{kovacsypg09} for a number of examples and for further discussion.

 For the convenience of the reader, we recall the following elementary example (\cite[Exa.~5.10]{kovacsypg09}).
Let us show in concrete terms, that any moduli stack containing $\mathbb P^1_k$  will fail  the valuative criterion for separatedness.  
Let $X=Y=\operatorname{Proj}_{k[t]}k[X_0,X_1,t]=
\left(\mathbb P^1_k\times \mathbb A^1_k\right)/\mathbb A^1_k$.  Over the open set $U=\operatorname{Spec}k[t]_t$, there is an isomorphism $X_U\to Y_U$ given by $([a_0:a_1],b)\mapsto ([ba_0:a_1],b)$.  This clearly does not extend to an isomorphism over $\mathbb A^1_k$.  Passing to the DVR $R=k[t]_{(t)}$, 
one sees the valuative criterion for separateness fails.    In fact, one can show that the valuative criterion for separateness fails in this example even if one considers polarizations (see \cite[Exa.~5.10]{kovacsypg09}).

For contrast, we direct the reader to Matsusaka--Mumford \cite[Thm.~2]{matmum64}  for a general result on separateness of moduli spaces of polarized manifolds that are not uni-ruled.  
 Finally, we point out that there do exist separated moduli spaces of certain uni-ruled varieties.  For instance, there are separated moduli spaces of Fano hypersurfaces of degree at least $3$ (\cite[Prop.~4.2, p.79]{git}), and we direct the reader to  Benoist \cite[Thm.~1.6]{benoistsep} for some recent results on separateness of moduli stacks of Fano  complete intersections.

\subsection{$K3$ surfaces} \label{seck3}

As another indication of the difficulties in establishing stable reduction theorems, we briefly  discuss the case of  $K3$ surfaces.    We work over $\mathbb C$.    Recall  a $K3$ surface is  a smooth, complex, projective surface $X$ with $K_X\cong \mathscr O_X$ and $q:=h^1(\mathscr O_X)=0$.   A polarized $K3$ surface is a  pair $(X,L)$ with $L$ an ample line bundle.  The degree of the polarization is defined to be $d:=L.L$.  Via  Hodge theory, one can construct a moduli space of polarized $K3$ surfaces of degree $d$, which we will denote by $ F_d^\circ$, together with a period map 
$$F_d^\circ\to \mathcal D_d/\Gamma_d,$$
 where $\mathcal D_d$ is a $19$-dimensional, symmetric homogeneous domain of type $IV$, and $\Gamma_d$ is an arithmetic group.  The morphism is injective (see \cite{loope80}) and has image equal to the complement of the quotient of an arithmetic hyperplane arrangement. 
   Note  that including $K3$ surfaces with $ADE$ singularities, one obtains a moduli space $F_d$ isomorphic to $\mathcal D_d/\Gamma_d$  \cite{kulikov77,perspink81}.
  We refer the reader to  \cite[\S 2.5, Thm.~2.9]{ghsK3ISM}, which includes  a concise overview of these results (and determines the Kodaira dimension of these spaces), and \cite[VIII]{barth} for more details and references. 
  
 The Satake--Bailly--Borel compactification $F_d^*:=(\mathcal D_d/\Gamma_d)^*$,  as well as the toroidal compactifications $\overline F_d:=\overline{\mathcal D_d/\Gamma_d}$, provide projective compactifications of the moduli of $K3$ surfaces.   It is not known whether any of these are the coarse moduli space for some proper stack of degenerations of $K3$ surfaces.

The first step towards constructing such a proper moduli space would be a stable reduction theorem.  A result  in this direction is a refined semi-stable reduction theorem due to  Kulikov and Persson--Pinkham.

\begin{teo}[Kulikov {\cite{kulikov77}}, Persson--Pinkham {\cite[Thm., p.45]{perspink81}}]
 A family of $K3$ surfaces over the punctured disk  $X^\circ\to S^\circ$ admits a semi-stable reduction $X\to S$ with central fiber $X_0$ (a reduced, snc scheme) \emph{such that   $K_{X_0}\equiv 0$}.    
\end{teo}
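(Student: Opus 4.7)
The plan is to start with an arbitrary semi-stable reduction of the given family and then modify it birationally over the base until the canonical divisor of the total space becomes numerically trivial, from which $K_{X_0} \equiv 0$ will follow by adjunction.

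First, I would apply the Semi-Stable Reduction Theorem (Theorem \ref{teossr}) to $X^\circ \to S^\circ$: after a finite base change $S' \to S$, I obtain a semi-stable model $\pi: X \to S'$ with smooth total space and reduced snc central fiber $X_0 = \sum_i D_i$; to lighten notation, I continue to write $S$ for $S'$. Since the generic fiber is a K3 surface, $\omega_{X_\eta} \cong \mathscr{O}_{X_\eta}$, so $K_X$ is supported on $X_0$, and I may write $K_X \equiv \sum_i a_i D_i$. The central fiber is cut out by a uniformizer on $S$, so $X_0 \sim 0$ on $X$; hence I may shift all $a_i$ by a common constant and normalize $\min_i a_i = 0$. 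By adjunction on the principal divisor $X_0$ we have $\omega_{X_0} \cong \omega_X|_{X_0}$, so it is enough to arrange $K_X \equiv 0$.

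The crux is a relative minimal model program for $X/S$. Because $K_X|_{X_\eta} \equiv 0$, every $K_X$-negative extremal ray is generated by a curve contained in $X_0$, so each extremal contraction is a birational surgery supported in the central fiber. These surgeries come in two flavours: a divisorial contraction that collapses a component $D_i$ which is ruled over an adjacent intersection curve, or a small contraction of a $(-1,-1)$-curve $C \subset X_0$, followed by its flop. In each case the operation can be carried out while preserving smoothness of the total space and the snc character of the central fiber, possibly after a further finite base change to remove any multiplicities introduced. A complexity invariant (for instance the lexicographically ordered triple of relative Picard number, $\sum_i a_i$, and number of components of $X_0$, or a Kulikov-type numerical invariant measuring the defect of $K_X$ from numerical triviality) strictly decreases at each step, forcing termination. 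Once the MMP terminates, $K_X$ is nef relative to $S$; combined with the fact that $K_X$ is supported on $X_0$ and $X_0 \sim 0$, this forces $K_X \cdot C = 0$ for every curve $C \subset X_0$, whereupon the connectivity of the dual complex of $X_0$ makes all the $a_i$ equal, hence zero. Thus $K_X \equiv 0$, and adjunction delivers $K_{X_0} \equiv 0$.

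The hard part is the birational modification step. One must (a) classify all the extremal contractions that can occur on a semi-stable K3 degeneration, (b) show that after each contraction or flop the new family again admits, possibly after a finite base change, a semi-stable structure with smooth total space and reduced snc central fiber, and (c) exhibit a concrete, strictly decreasing complexity invariant to guarantee termination. This is precisely the technical core of the arguments of Kulikov and of Persson--Pinkham, and both proofs carry out a detailed case analysis organized by which surface types (rational, elliptic ruled, K3, \emph{etc.}) can appear as components of $X_0$ and by the combinatorics of their intersection curves.
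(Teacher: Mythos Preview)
The paper does not give its own proof of this theorem; it is a survey, and the result is simply quoted with references to Kulikov and Persson--Pinkham, followed by remarks on related work of Shepherd-Barron and Shah. There is therefore nothing in the paper to compare your argument against.

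That said, your outline is a faithful sketch of the strategy in the cited references: begin with an arbitrary semi-stable model via Theorem~\ref{teossr}, write $K_X \equiv \sum_i a_i D_i$ supported on the central fiber, and then perform a sequence of birational modifications (elementary transformations, flops of $(-1,-1)$-curves, contractions of ruled components) over the base until all $a_i$ vanish. Your identification of the ``hard part'' --- the classification of the moves, preservation of the semi-stable structure, and a terminating complexity invariant --- is exactly where the work in Kulikov and Persson--Pinkham lies, and you are right not to claim those steps as routine.
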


An algebraic version due to Shepherd--Barron \cite[Thm.~2,  p.136]{SBext81} for families of polarized $K3$ surfaces provides a projective completion of the family, where the central fiber has slc  (rather than snc) singularities.     Some results  due to Shah \cite{shah} using GIT constructions have provided some ``weak'' forms of stable reduction in some specific cases (in the sense of GIT; see \S\ref{secgit}).
Unfortunately, none of these provide a stable reduction theorem for $K3$ surfaces, even in the polarized case.
   We refer the reader to \cite{fm83}  for a more extensive discussion of  the topic.

\begin{rem}
Recently, combining the Shepherd--Barron result  \cite[Thm.~2,  p.136]{SBext81} with the KSBA strategy, Laza \cite[Thm.~2.11]{laza12} has constructed a proper moduli space of stable (slc) $K3$ pairs $(X,\Delta)$.   Essentially, rigidifying the moduli problem further by choosing sections of the polarizations with mild singularities, a stable reduction theorem is possible.  We refer the reader to Laza \cite{laza12} for more details. 
\end{rem}

\subsection{Canonically polarized varieties} \label{seckol}
We now consider stable reduction for canonically polarized varieties.    The main result is a recent theorem of Koll\'ar \cite{kollar11}, which states that stable reduction holds for slc models.  We begin by recalling some of the definitions.

\subsubsection{Preliminaries} In this section, 
    \emph{all schemes will be taken to be reduced, of finite type over $\mathbb C$, and all points will be taken to be closed points, unless otherwise stated}.      
 Recall a node of an equidimensional scheme $X$ of dimension $n$ is a point $x\in X$ such that $\widehat {\mathscr O}_{X,x}\cong \mathbb C[[x_1,\ldots,x_{n+1}]]/(x_1x_2)$ as $\mathbb C$-algebras.  
$X$ is said to have at worst nodes in codimension $1$ if there exists an open subset $V\subseteq X$ with $\operatorname{codim}_X(X-V)\ge 2$, with the property that for all $x\in V$, $x$ is either a non-singular  point, or a node.   
For a scheme $X$ that is $S_2$, $X$ is nodal in codimension one if and only if, in codimension one,  it  is both semi-normal and Gorenstein  (see Koll\'ar  \cite[\S 5.1, p.196]{kollarsing}).

We will want to discuss divisors on reducible, equidimensional, reduced schemes $X$.   A Weil divisor $D$  on such a scheme is a finite, formal, integral, linear combination of (not necessarily closed) points  $E\in X$ such that $\mathscr O_{X,E}$ is a DVR.   There is a notion of linear equivalence for such divisors obtained  via  Weil divisorial subsheaves; we direct the reader to Corti \cite[(16.1.1), (16.2.2), p.171-2]{corti}.    A $\mathbb Q$-divisor is defined similarly, with $\mathbb Q$-coefficients.    A $\mathbb Q$-divisor $D$ on $X$ is said to be $\mathbb Q$-Cartier if there exists an $m\in \mathbb N$ such that $mD$ is the Weyl divisor associated to a Cartier divisor.  

For   $X$ a projective scheme,  we denote by $\omega_X^\bullet$ the dualizing complex.  We set $\omega_{X}:=h^{-n}(\omega_X^\bullet)$, and call this the canonical sheaf of $X$.   
If $X$ is Gorenstein in codimension one, then associated to $\omega_{X}$ is a linear equivalence class of Weil divisors (see e.g.~\cite[(16.3.3), p.173]{corti}).  We denote this equivalence class by $K_X$ and call it the canonical divisor (class).   We direct the reader also to \cite[\S 5.5]{kollarmori} and  \cite[Def.~1.6, p.14]{kollarsing}  for more discussion.

\begin{rem}
In order to limit the length of this survey, we have suppressed the notion of a pair in most of the topics covered.     However, the utility of parameterizing varieties together with a distinguished divisor  goes back at least to the case of principally polarized abelian varieties, where the canonical bundle is trivial, and one substitutes the theta divisor in its place to provide a natural rigidity to the problem.  Recently it has become clear that in many other situations it can be beneficial to consider pairs $(X,\Delta)$ where $X$ is a variety, and $\Delta$ is an effective  divisor so that $K_X+\Delta$ is ample (see especially the work of Koll\'ar and Alexeev cited above).  The notion of pairs will be central in what follows.
\end{rem}

\subsubsection{Semi log canonical models} 
 
We start by recalling  the definition of log canonical pairs. 
Let \emph{$X$ be a projective, reduced, equidimensional, $S_2$ scheme and let $\Delta$ be an effective $\mathbb Q$-divisor on $X$ such that $K_X+\Delta$ is $\mathbb Q$-Cartier}.
With these assumptions, we say that the pair $(X,\Delta)$ is \textbf{log canonical (lc)} if $X$ is smooth in codimension one (or equivalently $X$ is normal) and  there exists a  log resolution $f:Y\to X$ of $(X,\Delta)$ such that 
\begin{equation}\label{eqnlc}
K_Y=f^*(K_X+\Delta)+\sum_i a_i E_i,
\end{equation}
where the $E_i$ are $f$-exceptional divisors and   $a_i\geq -1$ for every $i$.   Note that the equality in  \eqref{eqnlc} is   $\mathbb Q$-equivalence of $\mathbb Q$-Cartier divisors.
 We say that $X$ is \textbf{lc}  if the pair $(X,0)$ is lc, where $0$ is the trivial divisor (see e.g.~\cite{kollarmori} for more discussion).
 
With the assumptions above (in italics), we say that the pair $(X,\Delta)$ is \textbf{semi log canonical (slc)}   if
 $X$ is nodal in codimension one (or equivalently, in codimension one $X$ is seminormal and Gorenstein), $K_X+\Delta$ is $\mathbb Q$-Cartier, and if $\nu:X^{\nu}\to X$ is the normalization of $X$ and $\Theta$ is the $\mathbb Q$-Weil divisor on $X$ given
by
\begin{equation}\label{eqntheta}
K_{X^{\nu}}+\Theta=\nu^*(K_X+\Delta),
\end{equation}
then the pair $(X^{\nu}, \Theta)$ is lc.   Note that the equality in \eqref{eqntheta} is an equivalence, for which we refer the reader to  \cite[(5.7.5), Def.-Lem.~5.10]{kollarsing}.  
 We say that $X$ is \textbf{slc}  if the pair $(X,0)$ is slc, where $0$ is the trivial divisor (see e.g.~Abramovich--Fong--Koll\'ar--McKernen \cite{AFKM},  Fujino \cite{fujino00}, Koll\'ar \cite{kollar11} and Koll\'ar \cite[\S 5.2]{kollarsing} for more discussion).

A \textbf{semi log canonical model} (slc model) is an slc scheme $X$ such that  $K_X$ is ample (\cite[Def.~15]{kollar11}).  In particular,  if $X$ is smooth, then it is an slc model if and only if it is canonically polarized.  A  motivation for this definition also comes from the cases of curves and surfaces.  A semi log canonical model of dimension one is a stable curve of genus $g\ge 2$.  A result of Koll\'ar--Shepherd-Barron \cite[Cor.~5.7]{KSB}, Koll\'ar  \cite[Cor.~5.6]{kollar90} and Alexeev \cite{alexeev94} establishes that there is a proper moduli space of semi log canonical models of dimension two (with fixed invariants $K_X^2$ and $\chi(\mathscr O_X)$).  The valuative criterion for properness of the moduli space can be established with an appropriate stable reduction theorem.

\subsubsection{Koll\'ar's stable reduction theorem}

Recently Koll\'ar has established a stable reduction theorem for semi log canonical models of any dimension.   The full statement would require introducing the notion of relative semi log canonical models (and in particular the notion of reflexive hulls on non-normal schemes), which we omit  (see \cite[Def.~28, 29]{kollar11}).   Below we state a weaker version of this stable reduction theorem, where the generic fiber of the family is lc.  We sketch the parts of the proof that are  formally  similar to the proof of the stable reduction theorem that we sketched in the case of curves.

\begin{teo}[{Koll\'ar \cite[5.38]{kollar11}}]\label{teokollar} 
  Let $B$ be on open subset of a non-singular curve over $\mathbb C$, fix a point $o\in B$, and set $U=B-\{o\}$.  Suppose that 
$$
\pi:X\to B
$$
is a flat, projective, morphism with connected fibers, such that the restriction 
$
\pi_U:X_U=X|_U\to U
$
has lc fibers, and has $\pi_U$-ample relative dualizing sheaf $\omega_{X_U/U}$.    Then there is a finite base change $f:B'\to B$, with $B'$ non-singular and $f^{-1}(o)$ a single point $o'$, and a scheme 
$$\pi^c:X^c\to B'$$ such that $X^c$ and $B'\times_BX$ are isomorphic over $U':=B'-o'$, and the fiber $X^c_{o'}=(\pi^c)^{-1}(o')$ is an slc model.  Moreover, the extension $X^c/B'$ is unique up to isomorphism.  
\end{teo}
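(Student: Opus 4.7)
The strategy mirrors the sketch of stable reduction for curves in characteristic zero given earlier in the excerpt, with the relative canonical model in the final step upgraded to a relative \emph{log} canonical model in the sense of the Minimal Model Program. The new ingredients, beyond what is used in the curve case, are the finite generation of canonical rings for log canonical pairs and Koll\'ar's theory of slc degenerations. \textbf{Step 1 (Semi-stable reduction).} Working over $\mathbb C$, first replace $X$ by a projective resolution so that the total space and $\pi_U$ are smooth, and then apply Theorem \ref{teossr} to obtain, after a finite base change $f:B'\to B$ with $f^{-1}(o)=o'$, a family $\pi':X'\to B'$ with $X'$ smooth, $(\pi')^{-1}(o')$ a reduced snc divisor, and agreeing birationally with $B'\times_B X$ over $U'=B'-o'$. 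In particular $(X',X'_{o'})$ is an snc, hence lc, pair.

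\textbf{Step 2 (Relative log canonical model).} Form the graded $\mathscr O_{B'}$-algebra
$$\mathscr R\;=\;\bigoplus_{n\geq 0}\pi'_*\!\left(\omega_{X'/B'}^{\otimes n}\right).$$
By the finite generation results for lc pairs (Birkar--Cascini--Hacon--McKernan and subsequent extensions to the log canonical setting), $\mathscr R$ is a finitely generated sheaf of $\mathscr O_{B'}$-algebras; set $X^c:=\operatorname{Proj}_{B'}\mathscr R$ with structure morphism $\pi^c:X^c\to B'$. Since $\omega_{X_U/U}$ is $\pi_U$-ample, $X^c$ agrees with $B'\times_B X$ over $U'$; globally, $X'\dashrightarrow X^c$ is the relative log canonical model of $(X',0)$ over $B'$, whence $\omega_{X^c/B'}$ is $\pi^c$-ample by construction.

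\textbf{Step 3 (Central fiber is an slc model).} One needs to verify that $X^c_{o'}$ is an slc model: nodal in codimension one, with $K_{X^c_{o'}}$ $\mathbb Q$-Cartier and ample, and with normalization giving rise to an lc pair as in \eqref{eqntheta}. Ampleness is immediate from Step 2, and the $\mathbb Q$-Cartier property of $K_{X^c_{o'}}$ follows from that of $K_{X^c/B'}$ by adjunction. The slc property itself is extracted by applying Koll\'ar's adjunction and gluing theory for lc pairs to $(X',X'_{o'})$: the reduced snc boundary descends, under the relative log canonical contraction, to $X^c_{o'}$, which thereby inherits slc singularities, cf.~\cite{kollarsing}.

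\textbf{Step 4 (Uniqueness, and the main obstacle).} Given two extensions $\pi^c_i:X^c_i\to B'$, $i=1,2$, both agree over $U'$ and both realize the relative log canonical model of the common generic fiber. After a further finite cover of $B'$ if needed, resolve the birational map $X^c_1\dashrightarrow X^c_2$ by a common smooth modification $\phi_i:Z\to X^c_i$, and verify
$$(\pi^c_1)_*\omega_{X^c_1/B'}^{\otimes n}\;\cong\;(\pi^c_1\circ\phi_1)_*\omega_{Z/B'}^{\otimes n}\;\cong\;(\pi^c_2\circ\phi_2)_*\omega_{Z/B'}^{\otimes n}\;\cong\;(\pi^c_2)_*\omega_{X^c_2/B'}^{\otimes n}$$
for all sufficiently divisible $n$; taking $\operatorname{Proj}_{B'}$ then yields $X^c_1\cong X^c_2$ over $B'$. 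The principal obstacle is Step 3: whereas in the curve case contracting $(-1)$- and $(-2)$-curves manifestly preserves nodal fibers, in higher dimensions the relative MMP produces small contractions and flips on the total space, and tracking how slc singularities propagate through these operations requires the full apparatus of Koll\'ar's theory of sources, springs, and gluing for slc pairs developed in \cite{kollarsing} --- precisely the machinery on which Theorem \ref{teokollar} rests.
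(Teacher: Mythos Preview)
Your proposal follows essentially the same route as the paper's sketch: semi-stable reduction, then $\operatorname{Proj}$ of the relative canonical algebra via BCHM/Hacon--Xu, then Koll\'ar's slc theory for the central fiber, then uniqueness via the common-resolution argument from the curve case. One small slip: in Step~1 you write ``replace $X$ by a projective resolution so that \ldots $\pi_U$ [is] smooth,'' but the hypothesis only gives lc fibers over $U$, and resolving the total space will not in general make the family smooth; the paper handles this by invoking Hacon--Xu for the lc generic fiber case rather than reducing to smooth fibers.
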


\begin{proof}[Sketch of the proof]  We sketch the proof in the case where the family is generically smooth.  For the general case  we refer the reader to \cite[\S 7]{haconxu} and \cite{kollarbook}.  
So, suppose that 
$
\pi:X\to B
$
is a flat, projective, morphism with connected fibers, such that the restriction 
$
\pi_U:X_U=X|_U\to U
$
is smooth, and has $\pi_U$-ample relative dualizing sheaf $\omega_{X_U/U}$.
From the Semi-stable Reduction Theorem we obtain a diagram as in \eqref{eqnsrdiag} including a morphism $\pi':X'\to B'$  satisfying the conclusions of Theorem \ref{teokollar}, except that the central fiber of the morphism $\pi':X'\to B'$, which is  normal crossing, may not have ample canonical class. 
(In the general case, where the fibers of $\pi_U$ are only assumed to be lc, rather than smooth, one applies a more general version of the Semi-Stable Reduction Theorem, e.g.~Theorem \ref{T:AbrKar}, below;  note, however,  that the generic fiber of the family obtained in this way may be different from the generic fiber of the original family.)

 From this point, motivated in part by the case of curves,  one considers 
$$
X^c:=\operatorname{Proj}_{B'}\left(\bigoplus_{k=0}^\infty \pi'_*\left(\omega_{X'/B'}^{\otimes k}\right)\right).
$$
A result of Birkar--Cascini--Hacon--McKernan \cite[Thm.~1.2 (3)]{bchm}   implies that the sheaf $\bigoplus_{k=0}^\infty \pi'_*(\omega_{X'/B'}^{\otimes k})$ is finitely generated as an $\mathscr O_{B'}$-algebra.  (See Hacon--Xu \cite{haconxu} for the general case, where the fibers of $\pi_U$ are only assumed to be lc.)
The projection $\pi^c:X^c\to B'$ is a projective morphism that  agrees with $B'\times_B X \to B'$ over $U'$.   One can show that 
the central fiber of $\pi^c$ is an slc model (\cite{kollar11}).

It remains to show that  $\pi^c:X^c\to B'$ is unique up to isomorphism.  
  One does this by establishing that  any other projective morphism $\hat \pi^c:\hat X^c\to B'$ with $K_{\hat X^c}$  a $\mathbb Q$-Cartier divisor, which  agrees with $\pi^c:X^c\to B'$ over $U'$, and which has central fiber an slc model,  is isomorphic to $\pi^c:X^c\to B'$ over $B'$.  The proof  is formally similar to the proof we sketched in the case of curves.  We direct the reader to  Koll\'ar [Pro.~6, Def.~7, Def.~15, and pp.8-9] for more details (see also \cite[Lem.~2.7]{bhattetal}). 
\end{proof}

\begin{rem} \label{remslcaut}
It is well known that a smooth, projective variety of general type has a  finite automorphism group (positive dimensional automorphism groups give rise to rational curves or abelian varieties covering the variety).   
More generally,   it is a result of Iitaka  that smooth, projective varieties of log general type 
have finite automorphism groups \cite[Lem.~1, p.87, Def.~p.71]{iitaka}.   Consequently, considering log resolutions of each irreducible component of the normalization, one would expect from \eqref{eqnlc} and Iitaka's result that an slc model would have a finite automorphism group; this is in fact the case \cite[p.328]{KSB}, \cite[Cor.~10.69]{kollarsing}, \cite[Lem.~2.5]{bhattetal}.   
\end{rem}

\begin{exa}
It is interesting to note the importance of having a condition such as slc in the remark above.  For instance, a plane quartic $C$  consisting of two smooth conics meeting in a single point,  which is a tacnode,   has ample canonical bundle  $\mathscr O_C(1)$.  However, the automorphism group of the curve is not finite (see \S \ref{secgitpc} below).  
\end{exa}

\begin{rem} 
Fix a Hilbert function $H:\mathbb Z\to \mathbb Z$, and let $\overline {\mathcal M}_H$ be the associated  category fibered in groupoids (over $\operatorname{Sch}_{\mathbb C}^{\text{\'et}}$), with objects that are families of slc models with Hilbert function $H$, as defined in \cite[Def.~29]{kollar11}, and with morphisms given by pull-back diagrams.   It is shown in \cite[Thm.~2.8]{bhattetal} (using recently announced results of Hacon--McKernan--Xu) that  $\overline{\mathcal M}_H$ is a proper   Deligne--Mumford $\mathbb C$-stack. 
\end{rem}


\section{Simultaneous stable reduction} \label{secsimsr}

Simultaneous stable reduction can be viewed as  the problem of stable reduction over bases other than a DVR; i.e.~extending families over higher dimensional bases.  
In the language of stacks, it is the problem of resolving rational maps from schemes to stacks.  Typically a generically finite base change is needed to do this.  The problem is in general quite delicate, and closely  related to the problem of resolving period maps to coarse moduli schemes.  

We discuss some well known results of Faltings--Chai \cite{fc} for abelian varieties, and some more recent results of de Jong, de Jong--Oort, and Cautis for curves.  An explicit example of simultaneous stable reduction was given in \S \ref{secexa}, and we start this section by discussing simultaneous stable reduction in the language of stacks.


\subsection{Simultaneous stable reduction in the language of stacks} \label{secssrs}

For a separated, finite type,  algebraic $Z$-stack $\mathcal M$, 
the valuative criterion for properness can be viewed as a question about resolving rational maps from the spectrums of DVRs into $\mathcal M$.  More precisely, $\mathcal M$ is proper if and only if for every DVR $R$, every rational map $S=\operatorname{Spec}R\dashrightarrow \mathcal M$ can be resolved after a generically finite base change.   

Simultaneous stable reduction is the problem of resolving rational maps from higher dimensional schemes into the stack.    
Let us make this more precise.   Given a $Z$-scheme $B$, a dense open subset $B^\circ\subseteq B$, and a $Z$-morphism
$$
B^{\circ}\to \mathcal M
$$
(which we will refer to loosely as a  rational $Z$-map $B\dashrightarrow \mathcal M$) we will say $B^\circ\to \mathcal M$ (or $B\dashrightarrow \mathcal M$) admits  a \textbf{simultaneous stable reduction}  if there exists a $Z$-alteration $\widetilde B\to B$ and a $Z$-morphism $\widetilde B\to \mathcal M$ extending the original morphism from $B^\circ$ in the sense that the following diagram is $2$-commutative:
\begin{equation}\label{eqnextssr}
\xymatrix{
\widetilde B \ar@{->}[d] \ar@{<-^)}[r]  \ar@{->}@/^1.2pc/[rr]& \widetilde B\times_B B^\circ\ar@{->}[d]  \ar@{->}[r] & \mathcal M\\
B  \ar@{<-^)}[r]& B^\circ \ar@{->}[ru]  &\\
}
\end{equation}

\begin{exa}
Let us review the example in \S \ref{secexa} in this language.  Recall we were given a family $X\to B=\mathbb A^2_k$ of plane cubics (with a section), and an open set $U\subseteq B$, so that the restriction $X_U\to U$ was a family of elliptic curves.  Such a family induces a morphism $U\to \overline{\mathcal M}_{1,1}$ (which we think of as a rational map $B\dashrightarrow \overline {\mathcal M}_{1,1}$).   We explicitly described  an alteration $\widetilde B'\to B$ with the property  that setting $\widetilde U'=U\times_B\widetilde B'$ to be the preimage,  the pull-back $X_U\times_U\widetilde U'$ extended to a family of stable marked curves $\widehat X\to \widetilde B'$.  This gives a morphism $\widetilde B'\to \overline {\mathcal M}_{1,1}$ extending the original morphism $U\to \overline {\mathcal M}_{1,1}$ (in the sense of \eqref{eqnextssr}).
\end{exa}

For proper, algebraic $Z$-stacks over a noetherian scheme $Z$, with finite diagonal,  simultaneous stable reductions exist quite generally.
The following result, which seems to be well known, was 
pointed out to the author by Fedorchuk (\cite[Rem.~7.3]{fedorchuk}).    

  \begin{teo}[{\cite[Rem.~7.3]{fedorchuk}, \cite[Thm.~2.7]{edidinetal}}]   \label{teossrfd}  
Let $Z$ be a noetherian scheme, and let $\mathcal M$ be a proper, algebraic $Z$-stack with finite diagonal   $\mathcal M\stackrel{\Delta}{\longrightarrow} \mathcal M\times_Z\mathcal M$. 
Then 
 any rational $Z$-map $B\dashrightarrow \mathcal M$ from a quasi-compact, quasi-separated  $Z$-scheme $B$  (or Noetherian $Z$-scheme $B$) admits a simultaneous stable reduction; i.e.~it can be resolved by an alteration.
\end{teo}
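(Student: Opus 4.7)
The plan is to follow the strategy indicated in Fedorchuk \cite[Rem.~7.3]{fedorchuk} and carried out in Edidin--Hassett--Kresch--Vistoli \cite[Thm.~2.7]{edidinetal}: first descend the rational map to the coarse moduli space of $\mathcal M$, where properness of the target lets me resolve it by the closure-of-the-graph trick; then lift back up to $\mathcal M$ by a base change and closure; and finally convert the resulting stacky alteration into an honest scheme alteration by a Chow-type argument.

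For the first step, the finite diagonal hypothesis ensures that $\mathcal M$ has finite inertia, so Keel--Mori applies and produces a proper coarse moduli space $\pi:\mathcal M\to M$ with $M$ a proper algebraic $Z$-space. Composition yields a morphism $B^\circ\to M$, i.e.\ a rational $Z$-map $B\dashrightarrow M$ into a proper target. I would let $B_1\subseteq B\times_Z M$ be the scheme-theoretic closure of its graph. The first projection $B_1\to B$ is proper (because $M$ is) and is an isomorphism over $B^\circ$, hence a modification; the second projection is a morphism $\bar f:B_1\to M$ extending $B^\circ\to M$.

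To lift $\bar f$ from $M$ to $\mathcal M$, I would form the base change $\mathcal M_{B_1}:=\mathcal M\times_M B_1$, a proper $B_1$-stack with finite diagonal. The original morphism $B^\circ\to\mathcal M$ together with the inclusion $B_1^\circ=B^\circ\hookrightarrow B_1$ defines, by the universal property of fiber products, a section $s:B_1^\circ\to\mathcal M_{B_1}$ of the structural morphism over the dense open $B_1^\circ$. Since $\mathcal M_{B_1}\to B_1$ is separated, this section is a locally closed immersion; let $\mathcal Z$ be its scheme-theoretic closure in $\mathcal M_{B_1}$. Then $\mathcal Z\to B_1$ is proper, surjective, and an isomorphism over $B_1^\circ$, so in particular generically finite; moreover $\mathcal Z$ carries a canonical morphism $\mathcal Z\hookrightarrow\mathcal M_{B_1}\to\mathcal M$ extending the given $B^\circ\to\mathcal M$.

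The remaining step, which I expect to require the most care, is to replace the stacky alteration $\mathcal Z\to B_1\to B$ with a scheme alteration in the sense of the paper. Because $\mathcal Z$ inherits finite inertia from $\mathcal M$, a second application of Keel--Mori produces a proper coarse moduli space $\bar{\mathcal Z}\to B_1$, which is a proper morphism of Noetherian algebraic spaces; Knutson's Chow lemma then yields a scheme $\tilde B_0$ with a proper generically finite surjection $\tilde B_0\to\bar{\mathcal Z}$. Since $\mathcal Z$ is generically a scheme (isomorphic to $B_1^\circ$ there), the map $\mathcal Z\to\bar{\mathcal Z}$ is an isomorphism over a dense open of $\bar{\mathcal Z}$, and pulling back gives a rational map $\tilde B_0\dashrightarrow\mathcal Z$ defined on a dense open; resolving its indeterminacy by taking the closure of the graph inside $\tilde B_0\times_{\bar{\mathcal Z}}\mathcal Z$ and iterating this Chow-type procedure as needed produces the desired scheme $\tilde B$ together with a proper generically finite surjection $\tilde B\to\mathcal Z$. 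Composition then furnishes both the scheme alteration $\tilde B\to B$ and the morphism $\tilde B\to\mathcal M$ making \eqref{eqnextssr} $2$-commutative. The case where $B$ has several irreducible components is handled by running the argument componentwise, and the quasi-compact/quasi-separated (equivalently, Noetherian) hypothesis on $B$ is exactly what allows Chow's lemma and Noetherian reduction to apply at every stage.
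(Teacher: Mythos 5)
There is a genuine gap, and it sits exactly at the hard point of the theorem: producing a \emph{scheme} that maps to the \emph{stack} $\mathcal M$. Your first two steps (Keel--Mori gives a proper coarse space $M$; close the graph of $B^\circ\to M$ to get a modification $B_1\to B$; then close up the section of $\mathcal M\times_M B_1\to B_1$ to get a proper ``stacky modification'' $\mathcal Z\to B_1$ with a map $\mathcal Z\to\mathcal M$) are reasonable in spirit, but your final step does not close the loop. A coarse moduli space $\bar{\mathcal Z}$ receives a map \emph{from} $\mathcal Z$, so a scheme $\tilde B_0$ obtained from $\bar{\mathcal Z}$ by Chow's lemma carries no morphism to $\mathcal Z$ or to $\mathcal M$; the closure of the graph of the rational map $\tilde B_0\dashrightarrow\mathcal Z$ inside $\tilde B_0\times_{\bar{\mathcal Z}}\mathcal Z$ is again a \emph{stack}, with the same stabilizer groups along the boundary that $\mathcal Z$ had, and its coarse space is essentially $\tilde B_0$ back again. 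So ``iterating this Chow-type procedure'' makes no progress and does not terminate: you never eliminate the stackiness while retaining the map to $\mathcal M$. The missing ingredient is precisely the Edidin--Hassett--Kresch--Vistoli theorem (Theorem \ref{teoehkv} in this paper): for a finite type algebraic stack over a noetherian base, quasi-finite (in particular finite) diagonal is equivalent to the existence of a \emph{finite surjective morphism $V\to\mathcal M$ from a scheme}. The paper's proof (Lemma \ref{lemfed}) then bypasses the coarse space entirely: pull $V$ back along $B^\circ\to\mathcal M$ to get a finite scheme cover $B^\circ\times_{\mathcal M}V\to B^\circ$, extend it to a finite cover $B'\to B$ (Zariski's Main Theorem or extension of coherent sheaves --- this is where qcqs/noetherian on $B$ is used), and take the closure of the graph of $B^\circ\times_{\mathcal M}V\to V$ inside the \emph{scheme} $B'\times_Z V$; the result maps to $V$ and hence to $\mathcal M$. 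You cite \cite{edidinetal} at the outset but never actually invoke the finite scheme cover, which is the one thing the argument cannot do without.

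A secondary inaccuracy: the section $s:B_1^\circ\to\mathcal M_{B_1}$ of a separated morphism of \emph{stacks} is not a locally closed immersion in general. It is a base change of the diagonal of $\mathcal M_{B_1}/B_1$, hence finite here, but it is a monomorphism only where the classified objects have trivial automorphisms (e.g.\ the trivial section of $BG\times T\to T$ is finite of degree $|G|$, not an immersion). Consequently the closed substack $\mathcal Z$ you form is not isomorphic to $B_1^\circ$ over $B_1^\circ$; it is a gerbe-like proper quasi-finite cover of it. This is repairable (one can work with the stack-theoretic image and its coarse space), but it feeds into the same underlying problem: at no stage of your construction do you produce a scheme together with a morphism to $\mathcal M$.
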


\begin{rem}\label{remdmfin}  
Recall that the diagonal morphism of an algebraic $Z$-stack is quasi-compact by assumption, and  an algebraic $Z$-stack is Deligne--Mumford if and only if the diagonal is unramified (e.g.~\cite[Thm.~8.1]{lmbchampes}).   
A quasi-compact, unramified morphism of schemes is quasi-finite.  
 In other words, Deligne--Mumford $Z$-stacks have quasi-finite diagonal (see also \cite[Lemma 1.13 (i)]{vistoli89}, and \cite[Rem.~2.5]{edidinetal} for a converse in characteristic $0$).     Recall also that an algebraic $Z$-stack locally of finite type has diagonal that is locally of finite presentation (e.g.~\cite[Cor.~1.4.3.1]{egaIV1}). 
 Finally, note that a separated stack has proper diagonal by definition; thus, since a proper, quasi-finite morphism, locally of finite presentation, is finite  (\cite[Thm.~8.11.1]{egaIV3}),  a separated, locally finite type algebraic $Z$-stack with quasi-finite diagonal in fact has finite diagonal.  In particular    
 a separated, finite type, Deligne--Mumford stack $\mathcal M/Z$ has finite diagonal (see also \cite[Lem.~1.13 (ii)]{vistoli89}).
\end{rem}

The theorem is an immediate consequence of the following lemma of Fedorchuk \cite{fedorchuk} and a theorem of  Edidin--Hassett--Kresch--Vistoli \cite{edidinetal}.

\begin{lem}[Fedorchuk {\cite[Rem.~7.3]{fedorchuk}}]  \label{lemfed} 
Let $Z$ be a noetherian scheme.  Let $\mathcal M$ be an algebraic  $Z$-stack, proper over $Z$,  that admits a finite, surjective $Z$-morphism 
$$
V\to \mathcal M
$$
from a scheme $V$.
Then any rational $Z$-map $B\dashrightarrow \mathcal M$ from a quasi-compact, quasi-separated  $Z$-scheme $B$ (or Noetherian $Z$-scheme $B$) admits a simultaneous stable reduction; i.e.~it can be resolved by an alteration.
\end{lem}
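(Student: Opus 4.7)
The plan is to construct the alteration by pulling back the finite surjective cover $V \to \mathcal M$ and taking a scheme-theoretic closure inside an ambient fiber product. First I would form $V^\circ := V \times_{\mathcal M} B^\circ$, which is a scheme because the finite (hence representable) morphism $V \to \mathcal M$ is being pulled back, and the induced projection $V^\circ \to B^\circ$ is finite and surjective by stability of these properties under base change. Observe also that $V \to \mathcal M$ finite and $\mathcal M \to Z$ proper imply $V \to Z$ is proper and of finite type, so the projection $V \times_Z B \to B$ is proper.

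The alteration I would take is $\widetilde B$, defined as the scheme-theoretic image of the morphism $V^\circ \to V \times_Z B$ whose components are the projection $V^\circ \to V$ and the composition $V^\circ \to B^\circ \hookrightarrow B$. The image is a well-defined closed subscheme of $V \times_Z B$ because, under the quasi-compact and quasi-separated (or Noetherian) hypothesis on $B$, this source morphism is quasi-compact. Writing $\widetilde\pi : \widetilde B \to B$ for the induced projection, I would verify: $\widetilde\pi$ is proper, being a closed subscheme of the proper morphism $V \times_Z B \to B$; it is surjective because its image is closed in $B$ and contains $B^\circ$ (which is the image of $V^\circ \to B^\circ$); and it is generically finite because the preimage $\widetilde B \cap (V \times_Z B^\circ)$ is the schematic closure of $V^\circ$ in $V \times_Z B^\circ$, which is finite over $B^\circ$ since $V^\circ \to B^\circ$ is finite.

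The required morphism to the stack is the composition $\widetilde B \hookrightarrow V \times_Z B \to V \to \mathcal M$. For the $2$-commutativity of diagram~\eqref{eqnextssr}, I would observe that $\widetilde B \times_B B^\circ$ contains $V^\circ$ as a schematically dense subscheme, and that the two morphisms $\widetilde B \times_B B^\circ \rightrightarrows \mathcal M$ (one via $V$, the other via $B^\circ$) restrict to $2$-isomorphic morphisms on $V^\circ$ essentially by the universal property of the fiber product $V^\circ = V \times_{\mathcal M} B^\circ$; the separatedness of $\mathcal M$ over $Z$ should then propagate this $2$-isomorphism from the schematically dense $V^\circ$ to all of $\widetilde B \times_B B^\circ$.

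The main technical difficulty will be this last $2$-commutativity verification, an analogue of the classical statement that two morphisms into a separated target agreeing on a schematically dense open must coincide, now in the $2$-categorical setting of algebraic stacks. This is where the properness (hence separatedness) of $\mathcal M/Z$, encoded in the fact that the diagonal $\mathcal M \to \mathcal M \times_Z \mathcal M$ is proper, really plays its role. Modulo this standard argument, the construction is essentially formal, and the quasi-compactness hypotheses on $B$ enter only to guarantee that the scheme-theoretic image is well-behaved and compatible with restriction to the open subscheme $V \times_Z B^\circ \subseteq V \times_Z B$.
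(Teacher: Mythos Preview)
Your construction has a genuine gap: the morphism $V^\circ \to V\times_Z B^\circ$ is \emph{not} an immersion, so passing to the scheme-theoretic image loses essential information and the $2$-commutativity fails.

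The point is that $V^\circ = V\times_{\mathcal M}B^\circ \to V\times_Z B^\circ$ is the base change of the diagonal $\mathcal M\to\mathcal M\times_Z\mathcal M$, and the diagonal of an algebraic stack is not a monomorphism (it encodes automorphisms).  Concretely, take $Z=\operatorname{Spec}k$, $\mathcal M=BG$ for a nontrivial finite group $G$, $V=\operatorname{Spec}k$, $B$ a smooth curve, and $B^\circ\to BG$ a nontrivial \'etale $G$-torsor $C^\circ\to B^\circ$.  Then $V^\circ=C^\circ$, $V\times_Z B=B$, and the scheme-theoretic image of $C^\circ\to B$ is just $B$ itself.  Your $\widetilde B$ equals $B$, and the map $\widetilde B\to V\to\mathcal M$ is the \emph{trivial} torsor, while $B^\circ\to\mathcal M$ is the nontrivial one; the diagram is not $2$-commutative.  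Your claim that ``$\widetilde B\times_BB^\circ$ contains $V^\circ$ as a schematically dense subscheme'' is exactly where this goes wrong: there is a surjection $V^\circ\twoheadrightarrow\widetilde B\times_BB^\circ$, not an inclusion, and the separatedness argument cannot descend a $2$-isomorphism along such a surjection.

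The paper's proof avoids this by inserting one extra step.  Before taking any closure, it first extends the finite morphism $V^\circ\to B^\circ$ to a finite morphism $B'\to B$ (this is where the Noetherian or qcqs hypothesis is actually used, via Zariski's Main Theorem or extension of coherent sheaves).  It then takes the closure of the graph of $V^\circ\to V$ inside $B'\times_Z V$ rather than $B\times_Z V$.  Since $B'|_{B^\circ}=V^\circ$ by construction, the graph map $V^\circ\to B'\times_Z V$ \emph{is} an immersion over $B^\circ$, so $\widetilde B|_{B^\circ}=V^\circ$ on the nose, and the $2$-commutativity is immediate from the universal property of the fiber product $V^\circ=V\times_{\mathcal M}B^\circ$.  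You can repair your argument by inserting this same step.
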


\begin{proof}  The proof (following Fedorchuk \cite{fedorchuk}) is short and we include it here. 
Consider the finite morphism 
$
V\to \mathcal M
$
assumed in the statement of the lemma.  Note we obtain that $V$ is proper over $Z$, since $V$ is finite (and hence proper) over $\mathcal M$ and we have assumed that $\mathcal M$ is proper over $Z$.  

Let $B^\circ\to \mathcal M$ be the morphism inducing the rational map $B\dashrightarrow \mathcal M$.  
From the definition of an algebraic stack, the diagonal is representable.  Consequently, $B^\circ\times_{\mathcal M}V$ is a scheme.
We then have
 a commutative diagram
\begin{equation} 
\xymatrix
{ 
B^\circ\times_{\mathcal M}V \ar@{->}[r] \ar@{->}[d]^{\text{finite}}& V \ar@{->}[d]^{\text{finite}}\\
B^\circ  \ar@{->}[r]& \mathcal M.\\
}
\end{equation}
Let $B'\to B$ be a finite morphism extending   $B^\circ\times_{\mathcal M}V\to B^\circ$; to obtain this extension one can either use Zariski's Main Theorem \cite[EGA  IV.3 Thm.~8.12.6,  p.45]{egaIV3} or \cite[Lem.~5.19, p.131]{fgae} (in the latter case, one extends the push forward of the structure sheaf of $B^\circ\times_{\mathcal M}V$ to a coherent sheaf on $B$ and then takes the relative spectrum).
 We thus obtain a rational map $B'\dashrightarrow V$.

Let $\widetilde B$ be the closure of the graph of $B^\circ\times_{\mathcal M}V\to V$ in $B'\times_Z V$.  The morphism $B'\times_Z V\to B'$ is proper by base change, and a closed immersion is proper.  It follows that $\widetilde B\to B'$ is proper, and also birational by construction. 
Thus the composition 
$$
\widetilde B\to B' \to B
$$ 
gives an alteration that resolves the map to $\mathcal M$.
\end{proof}

\begin{rem}
The assumption that the scheme $B$ be quasi-compact and quasi-separated, or that it be Noetherian, was used to ensure the existence of a finite cover of $B$ extending the given finite cover of $B^\circ$.  Another approach could be to assume that $B$ is covered by the spectrums of Japanese rings.  In this case, the appropriate integral closures will be finitely generated, allowing for another construction of a finite cover.
\end{rem}

Combining the lemma with the  following theorem of Edidin--Hassett--Kresch--Vistoli \cite{edidinetal} establishes Theorem \ref{teossrfd}.

\begin{teo}[Edidin et al.~{\cite[Thm.~2.7]{edidinetal}}]
  Suppose  $Z$ is  a noetherian scheme.  Let $\mathcal M$ be an algebraic $Z$-stack of finite type over $Z$.  Then the   diagonal 
$$
\mathcal M\to \mathcal M\times _Z \mathcal M
$$
is quasi-finite if and only if there exists a finite, surjective $Z$-morphism $V\to \mathcal M$ from a (not necessarily separated) $Z$-scheme $V$.
\end{teo}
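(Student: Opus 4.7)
The plan is to prove the two directions separately. The reverse implication $(\Leftarrow)$ is the relatively easy part, while the forward implication $(\Rightarrow)$ is where the substance lies.

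For $(\Leftarrow)$: suppose $p: V \to \mathcal{M}$ is a finite surjective $Z$-morphism with $V$ a scheme. I would reduce quasi-finiteness of the diagonal to a statement about automorphism group schemes at geometric points. Since the diagonal is of finite type (as noted in the excerpt's discussion of algebraic stacks), quasi-finiteness is equivalent to finiteness of the automorphism group scheme $\underline{\mathrm{Aut}}(m)$ at every geometric point $m$ of $\mathcal{M}$. For such an $m$, the non-empty fiber $V_m := V \times_{\mathcal{M}} \mathrm{Spec}\, k(m)$ is a finite scheme on which $\underline{\mathrm{Aut}}(m)$ acts freely in the schematic sense---freeness comes from the fact that $V$, being a scheme, has no non-trivial $2$-automorphisms. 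Free action translates into the map $\underline{\mathrm{Aut}}(m) \times V_m \to V_m \times V_m$, $(g,v) \mapsto (v, gv)$, being a closed immersion, which forces $\underline{\mathrm{Aut}}(m)$ to be finite as a closed subscheme of the finite scheme $V_m \times V_m$.

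For $(\Rightarrow)$: assume the diagonal is quasi-finite, and fix a smooth atlas $\pi: U \to \mathcal{M}$ by a scheme. The strategy is local-to-global. \emph{Locally}: given a closed point $u \in U$ with $\pi$ of relative dimension $d$ at $u$, I would slice $U$ \'etale-locally near $u$ by a regular sequence of length $d$ to obtain a closed subscheme $W \hookrightarrow U$ whose composition $W \to \mathcal{M}$ is quasi-finite in a neighborhood of $u$. The quasi-finite diagonal hypothesis enters here: the smooth groupoid $U \times_{\mathcal{M}} U \rightrightarrows U$ has source and target smooth of relative dimension $d$, while the stabilizer at $u$---which is the automorphism group scheme of the image $m \in |\mathcal{M}|$---is zero-dimensional. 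Consequently slicing by exactly $d$ functions at $u$ is the correct dimension count to make $W \to \mathcal{M}$ quasi-finite. \emph{Globally}: apply Zariski's Main Theorem for algebraic stacks to factor the quasi-finite separated morphism $W \to \mathcal{M}$ as an open immersion into a finite morphism $\bar W \to \mathcal{M}$. Since $|\mathcal{M}|$ is a noetherian topological space (finite type over noetherian $Z$), finitely many of the resulting $\bar W_i \to \mathcal{M}$ suffice to cover $\mathcal{M}$ set-theoretically, and $V := \bigsqcup_i \bar W_i$ is the desired finite surjective $Z$-morphism from a scheme.

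The main obstacle will be making the local construction precise. Ensuring that a generic slicing by $d$ functions at $u$ genuinely yields a quasi-finite composition $W \to \mathcal{M}$ (and not merely generically quasi-finite) requires precise control over the dimensions of the groupoid $U \times_{\mathcal{M}} U$ at $u$. In positive or mixed characteristic the automorphism group scheme at $m$ may be non-reduced, so the slicing cannot simply be chosen with respect to the reduced stabilizer at $u$; a careful analysis of the smooth-local structure of $\mathcal{M}$ near $m$, very likely invoking Artin approximation to pass to a henselian or completion-local model, is the technical heart of the argument and must be set up so that the resulting quasi-finite map stays finite after the Zariski's Main Theorem compactification.
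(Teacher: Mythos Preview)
The paper does not contain a proof of this theorem. It is stated as a cited result from Edidin--Hassett--Kresch--Vistoli \cite[Thm.~2.7]{edidinetal} and is invoked, together with Lemma~\ref{lemfed}, to deduce Theorem~\ref{teossrfd}; no argument is supplied beyond the citation. Consequently there is no ``paper's own proof'' to compare your proposal against.

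That said, your outline is a reasonable sketch of one way to approach the original EHKV result. The $(\Leftarrow)$ direction is essentially correct as you wrote it. For $(\Rightarrow)$, the slicing-and-compactifying strategy you describe is in the right spirit, but you should be aware that the argument in \cite{edidinetal} is organized somewhat differently: it passes through the existence of a coarse moduli space (via Keel--Mori, using that quasi-finite diagonal gives finite inertia after a reduction) and analyzes the stack as a gerbe \'etale-locally over the coarse space, rather than slicing a smooth atlas directly. Your approach via slicing is closer to arguments one finds, for instance, in work of Kresch or Rydh on quasi-finite flat covers; it can be made to work, but the step you flag as the ``main obstacle''---controlling the slice so that $W\to\mathcal M$ is genuinely quasi-finite and stays schematic after applying Zariski's Main Theorem for stacks---is indeed the delicate point, and your paragraph does not yet resolve it. In particular, ZMT for stacks produces a representable finite morphism to $\mathcal M$, but one must check that the source is a scheme (or reduce to that case), which requires an additional argument.
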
 \label{teoehkv} 

From Theorem \ref{teossrfd} we obtain the following corollary.

\begin{cor}
Suppose that $\mathcal M$ is one of the following stacks: 
\begin{enumerate}

\item $\bar {\mathcal A}_g^A$, the moduli of stable semi-abelic pairs degree $1$ and dimension $g$;

\item $\overline {\mathcal M}_{g}$ ($g\ge 2$), the moduli of stable, genus $g$ curves;

\item $\overline {\mathcal M}^{ps}_g$ ($g\ge 3$), the moduli of pseudo-stable, genus $g$ curves;

\item $\overline {\mathcal M}_H$, the moduli of slc models  associated to a Hilbert function $H$;

\item $\overline{\mathcal P}_d$, the moduli of degree $d$, stable slc $K3$ pairs.

\end{enumerate}
 Then 
 any rational map $B\dashrightarrow \mathcal M$ from a quasi-compact, quasi-separated  scheme $B$  (or Noetherian  scheme $B$) admits a simultaneous stable reduction; i.e.~it can be resolved by an alteration. 
\end{cor}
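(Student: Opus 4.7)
The plan is to reduce the corollary to a direct application of Theorem \ref{teossrfd}, which provides simultaneous stable reduction for any proper algebraic $Z$-stack with finite diagonal (after taking $Z$ to be $\operatorname{Spec}\mathbb Z$ or $\operatorname{Spec}\mathbb C$ as appropriate). The argument then amounts to verifying, case by case, the two hypotheses: properness and finiteness of the diagonal.

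For cases (2), (3), and (4), the paper has already stated that the stacks $\overline{\mathcal M}_g$, $\overline{\mathcal M}_g^{ps}$, and $\overline{\mathcal M}_H$ are proper Deligne--Mumford stacks over $\operatorname{Spec}\mathbb C$ (the last via \cite[Thm.~2.8]{bhattetal}, invoking the recently announced work of Hacon--McKernan--Xu). In each of these cases, Remark \ref{remdmfin} is what carries the argument home: a separated, finite type Deligne--Mumford stack automatically has finite diagonal, so Theorem \ref{teossrfd} applies directly. I would simply cite this remark and note that properness implies separatedness and finite type in our conventions. For case (1), Alexeev's theorem \cite[Thm.~5.10.1]{alexeev}, already quoted in \S \ref{secab}, states that $\bar{\mathcal A}_g^A$ is a proper algebraic $\mathbb Z$-stack with finite diagonal, so Theorem \ref{teossrfd} applies on the nose. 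For case (5), I would invoke Laza's construction \cite[Thm.~2.11]{laza12} of the proper moduli space of stable slc $K3$ pairs, using the same Keel--Mori/Remark \ref{remdmfin}-style observation to obtain finite diagonal.

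In each case, once the hypotheses are in place, given a rational map $B\dashrightarrow \mathcal M$ represented by a morphism $B^\circ\to \mathcal M$ from a dense open $B^\circ\subseteq B$, Theorem \ref{teossrfd} produces an alteration $\widetilde B\to B$ and a morphism $\widetilde B\to \mathcal M$ fitting into the $2$-commutative diagram \eqref{eqnextssr}. The quasi-compact/quasi-separated (or Noetherian) hypothesis on $B$ is inherited verbatim from the hypothesis of Theorem \ref{teossrfd}; it is only needed to apply Lemma \ref{lemfed} (via Zariski's Main Theorem or \cite[Lem.~5.19]{fgae}) to extend the finite cover $B^\circ\times_{\mathcal M}V\to B^\circ$ of $B^\circ$ to a finite cover of $B$.

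The only step where there is any real content beyond bookkeeping is confirming, for the newer stacks (4) and (5), that we are indeed in the setting of Theorem \ref{teossrfd}. The main potential obstacle is that $\overline{\mathcal M}_H$ and $\overline{\mathcal P}_d$ rely on properness statements that depend on recently announced results (finite generation of log canonical rings via \cite{bchm}, and the Hacon--McKernan--Xu input cited in \cite{bhattetal}); so the corollary is only as strong as those inputs. Granting them, however, the verification of the finite diagonal is routine in the Deligne--Mumford setting, and no further geometry beyond Theorem \ref{teossrfd} and Remark \ref{remdmfin} is required.
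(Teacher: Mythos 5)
Your proposal is correct and follows the same route as the paper: the corollary is stated there as an immediate consequence of Theorem \ref{teossrfd}, with properness and finite diagonal supplied case by case exactly as you do (Alexeev's theorem for $\bar{\mathcal A}_g^A$, the Deligne--Mumford property plus Remark \ref{remdmfin} for the curve and slc-model stacks, and Laza's construction for $K3$ pairs). Your added caveat about the inputs from \cite{bhattetal} and \cite{bchm} is a fair observation but does not change the argument.
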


\begin{rem}
In concrete terms, the corollary says the following. Given a dense open subset $U\subseteq B$, and a family $X_U\to U$, there exists an alteration $\widetilde B\to B$ so that the pull-back of the family can be extended to a family over all of $\widetilde B$.
\end{rem}

\begin{rem}
Part (2) of the corollary is a special case of a theorem of de Jong \cite[Thm.~5.8]{dejong1}.  We direct the reader there for more details, especially for a discussion of the total space of the family.  
\end{rem}

In many cases it can be useful to have an explicit description of an alteration giving a stable reduction.  We will call this an \textbf{explicit simultaneous stable reduction}.  Along these lines, one of the first questions one can ask is whether a rational map to a stack can be extended without an alteration.  In particular, when $B$ is non-singular, and $\Delta=B-B^\circ$ is an snc divisor,   a theorem giving conditions for the rational map to extend to $B$ will be called an \textbf{extension theorem}.

Finally, when a stack admits  a coarse moduli scheme, it can also be interesting to consider the problem of  resolving  the induced rational map to the coarse moduli scheme.    One place these types of problems show up naturally is in resolving rational (period) maps between coarse moduli schemes.   

More precisely, suppose  $\mathcal  M_1$ and $\mathcal M_2$  are algebraic $Z$-stacks admitting coarse moduli schemes $\mathcal M_1\to M_1$ and $\mathcal M_2\to M_2$.    Suppose there is an open dense subset $U_1\subseteq M_1$, which admits morphisms $U_1\to \mathcal M_1$ and $U_1\to \mathcal M_2$.  This induces a rational map $M_1\dashrightarrow M_2$, and one may be interested in  both a simultaneous stable reduction for $U_1\to \mathcal M_2$ as well as a resolution of the rational map $M_1\dashrightarrow M_2$.    We will consider both types of problems in what follows.

\subsection{Simultaneous stable reduction for abelian varieties}  We begin by considering extension theorems for abelian varieties.  That is we consider the case of extending families of abelian varieties over non-singular bases other than a DVR.   The main result we mention is due to Faltings--Chai \cite{fc}.

\begin{teo}[Faltings--Chai Extension {\cite[Thm.~6.7, p.185]{fc}}]  Let $B$ be a regular scheme over a field of characteristic $0$.  Let $
\Delta \subseteq B$ be an nc divisor.  Let $A_U$ be an abelian scheme over $U=B-\Delta$, which extends to a semi-abelian scheme $A_V$ over an open subscheme $V$ containing $U$ and the generic points of $\Delta$.  Then $A_U$ extends uniquely to a semi-abelian scheme $A_B$ over $B$.
\end{teo}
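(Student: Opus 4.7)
The plan is to prove uniqueness first (so that \'etale-local constructions can be glued), and then to reduce existence to a local problem that is solved by Mumford's construction of degenerating semi-abelian schemes, following \cite{fc} itself. For uniqueness, if $A_B$ and $A_B'$ are two semi-abelian extensions of $A_U$, then the uniqueness in the Grothendieck--Mumford Stable Reduction Theorem, applied at each generic point of $\Delta$ (whose local rings are DVRs), provides a canonical isomorphism of $A_B$ and $A_B'$ over the localizations of $B$ at these points, hence over $V$. Since $B$ is regular and $B - V$ has codimension at least $2$, morphisms of smooth separated group schemes defined on $V$ extend uniquely to $B$ (a Hartogs/N\'eron mapping property; cf.~\cite[\S I.2]{fc}). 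The same principle makes the existence statement \'etale local, so one may reduce to the case $B = \operatorname{Spec} R$ with $R$ a strictly henselian (or complete) regular local ring and $\Delta$ cut out by $t_1 \cdots t_r = 0$ for part of a regular system of parameters.

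For existence in this local setting, along each generic point of a component of $\Delta$ the given semi-abelian scheme is a Raynaud--Mumford degeneration, classified by a tuple $(\widetilde G, X, \iota, \lambda)$ consisting of a semi-abelian scheme $\widetilde G$ with abelian reduction over a formal cover, a character lattice $X$ for its toric part, a period homomorphism $\iota : X \to \widetilde G^t$, and a polarization $\lambda$ satisfying the usual positivity condition (see \cite[Ch.~II--III]{fc}). The key step is to show that these degeneration data, which are \emph{a priori} defined only on $V$, extend uniquely across the codimension-$\ge 2$ complement $B - V$. The abelian parts and character lattices attached to the various components of $\Delta$ are determined by the local toric monodromy (whose quasi-unipotence in characteristic $0$ is the content of the Monodromy Theorem) and match along deeper strata through the built-in Mumford compatibilities; the period $\iota$ is described by sections of line bundles on an \'etale cover, i.e.\ by regular functions that extend across the codimension-$2$ locus by Hartogs on the regular base $B$; and positivity of $\lambda$ is an open condition that propagates. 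Mumford's construction applied to the extended datum produces a formal semi-abelian scheme over the completion of $B$ along $\Delta$, which Grothendieck's algebraization theorem (the polarization supplying a relatively ample invertible sheaf on a polarized compactification) promotes to an algebraic $A_B / B$ whose restriction to $V$ recovers $A_V$ by construction.

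The main obstacle, as I see it, is the algebraization and global compatibility step: Mumford's construction produces only a formal (or rigid-analytic) object, and one must both descend it to an algebraic scheme over $B$ and check that the degeneration data attached to different codimension-$1$ components of $\Delta$ glue consistently along their intersections. This forces careful bookkeeping with cubical structures, cocycle conditions, and the positivity of the period pairing, and is the technical heart of \cite[Ch.~III]{fc}. Once this is in place, the uniqueness from the first step allows the \'etale-local extensions to be assembled into the desired global semi-abelian scheme $A_B$ on $B$.
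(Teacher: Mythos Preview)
The paper does not supply its own proof of this theorem: it is stated as a cited result from Faltings--Chai \cite[Thm.~6.7, p.185]{fc}, immediately followed by a remark on the failure in positive characteristic, with no argument given. So there is nothing in the paper to compare your proposal against.

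That said, your outline is a reasonable summary of the strategy actually used in \cite{fc}. A couple of points deserve tightening. First, in the uniqueness step, the assertion that ``morphisms of smooth separated group schemes defined on $V$ extend uniquely to $B$'' is not automatic for arbitrary smooth separated group schemes; what is used is the specific extension property for homomorphisms between \emph{semi-abelian} schemes over a normal base (this is \cite[Prop.~I.2.7]{fc}), which relies on the absence of $\mathbb G_a$-factors in the fibers. Second, your sketch of existence glosses over exactly the point you flag as the ``main obstacle'': the Mumford degeneration data are not literally attached to each component of $\Delta$ and then glued, but are extracted from the formal completion along the \emph{entire} special fiber after passing to a suitable toroidal model, and the hard work in \cite[Ch.~III]{fc} is showing that the period map and polarization form extend over the deeper strata and satisfy the requisite positivity there. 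Your description of $\iota$ as ``sections of line bundles \dots\ that extend by Hartogs'' is morally right but hides the fact that one must first construct the relatively complete model and check its compatibility with the cubical structure. In short: the skeleton is correct, but the flesh is precisely the content of \cite[Ch.~II--III]{fc}, not something one can bypass.
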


\begin{rem}
This fails in positive characteristic.  A counter example when the characteristic of the generic points of $B$ are positive is given in \cite[p.192]{fc}.   A counter example of Raynaud--Ogus--Gabber, when the characteristic of the generic points of $B$ are zero (but where other points have positive characteristic), is given in de~Jong--Oort \cite[\S 6]{dejongoort}.
\end{rem}

The Faltings--Chai theorem  implies a special case of the Borel Extension Theorem.  Recall that we use the notation $\mathcal A_g$ for the stack of principally polarized abelian varieties of dimension $g$.  A morphism $U\to\mathcal A_g$ corresponds to a  family $A_U\to U$ of principally polarized abelian varieties.  We denote the coarse moduli space by $A_g$.    We denote by $A_g^*$ the Satake (Bailly--Borel) compactification, and by $\bar A_g$ any one of Mumford's toroidal compactifications.  The most common toroidal compactification we will use is the second Voronoi, which we will denote by $\bar A_g^{Vor}$.  We direct the reader to \cite{namikawa80} for more details on compactifications of $A_g$.

\begin{teo}[Borel Extension {\cite[Thm.~A]{borelextension}}]
 Let $B$ be a regular scheme over a field of characteristic $0$.  Let $
\Delta \subseteq B$ be an nc divisor.  Setting $U=B-\Delta$, then for any morphism $f:U\to \mathcal A_g$, the composition $U\to \mathcal A_g\to A_g$ extends to a morphism $B\to A_{g}^*$.
\end{teo}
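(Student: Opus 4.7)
The plan is to deduce this from the Faltings--Chai Extension Theorem, using the stable reduction theorem for abelian varieties to pass to a finite cover over which Faltings--Chai applies. The given morphism $f:U\to \mathcal{A}_g$ corresponds to a principally polarized abelian scheme $A_U\to U$, and the goal is to extend the composite $U\to A_g \hookrightarrow A_g^*$ to a morphism $B\to A_g^*$.

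First I would construct a suitable finite Galois cover $\pi:B'\to B$. For each generic point $\eta$ of a component of $\Delta$, the Grothendieck--Mumford Stable Reduction Theorem applied to the pullback of $A_U$ along $\operatorname{Spec}\mathscr O_{B,\eta}$ guarantees semi-abelian reduction after a finite extension of $K(\eta)$. Because $\Delta$ is nc, $B$ is regular, and we are in characteristic $0$, a Kummer-type cover ramified to a sufficiently divisible order along each component of $\Delta$ can be arranged so that $B'$ is still regular, $\pi^{-1}(\Delta)$ is still nc, and the pullback $A_{U'}\to U':=\pi^{-1}(U)$ has semi-abelian reduction at every generic point of $\pi^{-1}(\Delta)$. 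This yields an extension of $A_{U'}$ to a semi-abelian scheme $A_{V'}\to V'$ on an open subset $V'\subseteq B'$ containing $U'$ and those generic points. The Faltings--Chai Extension Theorem then extends this uniquely to a semi-abelian scheme $A_{B'}\to B'$.

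From $A_{B'}$ I would produce a morphism $\bar f':B'\to A_g^*$ as follows. Fiberwise, $A_{B',b'}$ fits into a canonical extension $0\to T_{b'}\to A_{B',b'}\to A_{b'}\to 0$ with $A_{b'}$ a polarized abelian variety of some dimension $g'\le g$; the assignment $b'\mapsto [A_{b'}]\in A_{g'}^*\subseteq A_g^*$ is the desired set-theoretic map, and it agrees with $U'\to A_g^*$ on $U'$. To upgrade this to a scheme morphism I would pass through an auxiliary toroidal compactification: the Faltings--Chai degeneration data associated to $A_{B'}$ determine a classifying morphism $B'\to \bar A_g^{Vor}$, which composed with the canonical contraction $\bar A_g^{Vor}\to A_g^*$ yields $\bar f'$. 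Finally, I would descend $\bar f'$ along $\pi$: the uniqueness clause in Grothendieck--Mumford implies that any two finite base changes produce isomorphic semi-abelian reductions, so $\bar f'$ is invariant under $\operatorname{Gal}(B'/B)$; since $A_g^*$ is a scheme and $\pi$ is finite surjective, Galois descent produces the required $\bar f:B\to A_g^*$.

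The main obstacle lies in the middle step. The set-theoretic recipe $b'\mapsto [A_{b'}]$ is manifestly canonical, but showing it is a scheme morphism is not automatic from the existence of the semi-abelian extension alone; it requires the intermediate passage through a toroidal compactification (or an equivalent direct construction via Siegel modular forms), which in turn relies on choosing combinatorial fan data compatible with the monodromy of $A_{U'}$ along each boundary component. Once this morphism-of-schemes step is in place, both the reduction to the Faltings--Chai hypothesis via a Kummer cover and the Galois descent back to $B$ are essentially formal.
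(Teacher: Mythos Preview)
Your proposal is correct and follows essentially the same route as the paper: take a Kummer cover so that the pulled-back abelian scheme acquires semi-abelian reduction along the generic points of the boundary (via the monodromy/stable reduction theorem), apply the Faltings--Chai Extension Theorem to obtain a semi-abelian scheme over the whole cover, and then descend the resulting map to $A_g^*$.

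Two minor differences in packaging are worth noting. First, for the descent step the paper simply invokes the elementary fact that a rational map from a normal scheme to a \emph{scheme} (here $A_g^*$) extends as soon as it extends after a finite surjective base change (see \cite[Lem.~2.4]{cautis}); this avoids your explicit Galois-invariance argument and in particular does not require the cover to be Galois. Second, the step you flag as the main obstacle---passing from the semi-abelian extension on $B'$ to an honest morphism $B'\to A_g^*$---is not spelled out in the paper either; the paper effectively defers it to Faltings--Chai \cite[Cor.~6.11]{fc}, which already packages the construction of the map to the minimal compactification. Your route through a toroidal compactification followed by the contraction $\bar A_g^{Vor}\to A_g^*$ is a legitimate alternative and makes the geometry more visible, at the cost of invoking heavier machinery.
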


Borel's proof uses hyperbolic complex analysis and holds more generally for (locally liftable) holomorphic maps into Baily--Borel compactifications of arithmetic quotients of bounded symmetric domains.  Faltings--Chai \cite[Cor.~6.11, p.191]{fc}   also prove the related statement for maps into the moduli space $A_g[n]$ of principally polarized abelian varieties with level $n$-structure for $n\ge 3$.  In this case they can use  \cite[Thm.~7.9, Thm.~7.10, p.139]{git} to conclude that the coarse moduli space is quasi-projective and fine.  In other words, in both  this situation, as well as  under the assumptions of the Borel extension theorem as stated above, one may assume  there is a family of abelian varieties over $U$.  

The argument from there is short.  First, the extension statement is local.  One can also show that it suffices to establish extension after a finite base change (e.g.~\cite[Lem.~2.4]{cautis}).  
Thus we may take an 
\'etale base change, and assume we are in the situation where $B$ is regular and $\Delta$ has support defined by $x_1\cdots x_r$, where $x_1,\ldots,x_r$ form part of a system of local parameters.   Taking the finite cover $t_1=x_1^{m_1},\ldots,t_r=x_r^{m_r}$ for appropriate values of $m_1,\ldots,m_r$,  one uses the monodromy theorem to get extension over the generic points of $\Delta$.    The result then follows from the Faltings--Chai Extension Theorem.

\begin{rem}
The condition in the Borel Extension Theorem  that there is a family of abelian varieties over $U$ (or more generally that  the holomorphic map is locally liftable to the bounded symmetric domain) is essential.  More precisely,  for $B$ and $U$ as in the theorem, given a morphism $f:U\to A_g$, this need not  extend to a morphism $B\to A_{g}^*$.   An elementary example comes from the case of $A_1$ and $A_1^*$.  We can identify $A_1$ as the quotient $\mathbb H/\operatorname{SL}(2,\mathbb Z)$  of the upper half plane by the special linear group in the usual way, and it is well known that  $A_1^*$ is isomorphic to $\mathbb P^1_{\mathbb C}$.  The map $(\mathbb C^*)^2\to \mathbb P^1_{\mathbb C}$ given by  $(\lambda_1,\lambda_2)\mapsto [\lambda_1:\lambda_2]$ clearly does not extend to a morphism from $\mathbb C^2$.
\end{rem}

\begin{rem}
It is natural to ask whether a statement like the Borel extension theorem could hold for the toroidal compactifications of $A_g$, and indeed there is an extension theorem due to Ash--Mumford--Rapaport--Tai \cite{amrt} (see also Namikawa \cite[Thm.~7.29, p.78]{namikawa80}) giving explicit conditions for morphisms to extend over nc boundaries.  In concrete examples these extension conditions can be difficult to establish.   We discuss some particular examples below.
\end{rem}

\subsection{Examples of period maps to $A_g$}  We now consider the related problem of resolving period maps into compactifications of the moduli scheme of abelian varieties.     In this section we will work over $\mathbb C$.   The most well known example is  the Torelli map for curves; i.e.~the morphism 
$$
\mathcal T:\mathcal M_g\to \mathcal A_g
$$
that sends a curve $C$ to its principally polarized Jacobian $(JC,\Theta_C)$.    Let $T:M_g\to A_g$ be the associated morphism of coarse moduli spaces.  Torelli's theorem states that $T$ is injective.  

The boundary $\Delta$ in $\overline M_g$ is (up to finite quotient singularities) an nc divisor.  As a consequence of the Borel extension theorem, we obtain a morphism
$$
T^*:\overline M_g\to A_g^*
$$
extending $T$.  
For the toroidal compactifications of $A_g$, there are the  general extension results mentioned above.  In practice, these can be  difficult to verify.
It is a result of  Mumford and Namikawa \cite{namikawa80}, \cite[\S 18]{namikawa1}  that $T$ 
extends to a morphism
$$
\overline T^{Vor}:\overline M_g\to \bar A_g^{Vor}.
$$
Caporaso--Viviani describe the fibers of the morphism in \cite{capovivi}.
   In addition, it is shown in Alexeev \cite{alexeevtor} that there is a morphism $\overline {\mathcal T}^{Vor}:\overline{\mathcal M}_g\to \bar {\mathcal A}_g^{A}$ extending $\mathcal T$.  
   We direct the reader to Alexeev--Brunyate \cite{firstvor} for a proof that the Torelli map for stable curves extends to a morphism to the first Voronoi compactification (see also Gibney  \cite{gibtor} for more on the image of the  Torelli map to other toroidal compactificiations).

We now turn our attention to the Prym map.  We denote by $\mathcal R_g$ the moduli stack of connected, \'etale  double covers of non-singular curves of genus $g$.  
The Prym map
$$
\mathcal Pr:\mathcal R_g\to \mathcal A_{g-1}
$$
takes a double cover $\pi:\tilde C\to C$ to its principally polarized Prym variety $(P,\Xi)$ (see Mumford  \cite{mprym} for more details).  We denote by $Pr:R_g\to A_{g-1}$ the associated morphism of coarse moduli spaces.  It is well known that 
the map is dominant for $g\le 6$ (see esp.~\cite{bschott}), and in the other direction, 
Friedman--Smith \cite{fstor} and Kanev \cite{kanev} have shown that the map is generically injective for $g\ge 7$.

There is a compactification, $\overline {\mathcal R}_g$, due to Beauville \cite{bschott},  consisting of admissible double covers.   The coarse moduli space $\overline R_g$ has (up to finite quotient singularities) an nc boundary.  As a consequence, there is an extension 
$$
Pr^*:\overline R_g\to A_{g-1}^*.
$$
   On the other hand,  Friedman--Smith \cite{fs} have shown that the Prym map does not extend to a morphism  to $\bar A_{g-1}^{Vor}$ (or any reasonable Toroidal compactification). 
We direct the reader to Alexeev--Birkenhake--Hulek \cite{abh} for more details on the indeterminacy locus of the Prym map to $\bar A_{g-1}^{Vor}$.

The Clemens--Griffiths \cite{cg} period map for cubic threefolds provides another interesting example.
Recall that a cubic threefold is a smooth cubic hypersurface  $X\subseteq \mathbb P^4$.  The intermediate Jacobian of $X$ is the five dimensional complex torus  $JX:=
H^{1,2}(X)/H^3(X,\mathbb Z)$. This admits a principal polarization $\Theta_X$, given by the hermitian form $h$ on $H^{1,2}(X)$ defined by
$h(\alpha,\beta)=2i\int_X\alpha\wedge \bar{\beta}$.   Letting $M_{cub}$ be the moduli space of cubic threefolds, 
one obtains a morphism 
 $$J:M_{cub}\to A_5.$$
By virtue of the Clemens and Griffiths Torelli theorem \cite{cg} (see also Mumford \cite{mprym}),   $J$ is injective.  We denote the image by $I$, and we direct the reader to  Casalaina-Martin--Friedman \cite{cmf} for a geometric characterization of the abelian varieties parameterized by $I$.   

The space $M_{cub}$ admits a  GIT compactification
$$
\overline{M}_{cub}=\mathbb PH^0(\mathbb P^4,\mathscr O_{\mathbb P^4}(3))/\!\!/\operatorname{SL}(5),
$$
 (see Allcock \cite{allcock1}, Yokoyama \cite{yoko}) and  it is natural to consider extensions of the period map $J$ to $\overline M_{cub}$. 
Allcock--Carlson--Toledo \cite{act} and Looijenga--Swierstra \cite{looijengaswierstra1} have shown that $M_{cub}$ can be identified with an open dense subset of a 
ten dimensional ball quotient  $\mathcal B/\Gamma$.   They show moreover, that the rational map $\overline M_{cub}\dashrightarrow (\mathcal B/\Gamma)^*$ to the Baily--Borel compactification,  can be resolved by blowing up a single point.  We call the resulting space $\widehat M_{cub}$.
  
Using the description of $\widehat M_{cub}$ given in \cite{act,looijengaswierstra1},  Laza and the author describe an explicit blow-up $\widetilde M_{cub}$ of $\widehat M_{cub}$, with discriminant an nc divisor \cite{casalaza}.     The process used to obtain the resolution is the same as that described for simultaneous stable reduction for $ADE$ curves below (see \S \ref{secade}).    The Borel extension theorem then gives a morphism 
$$
J^*:\widetilde M_{cub}\to A_5^*.
$$
      Laza and the author  use this extension of the period map together with results from \cite{mprym, bschott, cmf, casa} and the explicit description of $\widetilde M_{cub}$ to describe the boundary of the image of $J^*$ \cite[Thm.~1.1]{casalaza}.

\begin{rem}  An explicit resolution of the map $\overline M_{cub} \dashrightarrow \bar A_5^{Vor}$ is still not known.  
    Certain components of the boundary of the (closure of the) image have been identified by Grushevsky--Salvati Manni \cite{samtriple} and Grushevsky--Hulek \cite{gruhulek} via the theory of theta functions.  
\end{rem}


\subsection{Simultaneous stable reduction for curves}
 We again begin by considering extension theorems.
In analogy with the Faltings--Chai extension theorem for abelian varieties, we mention the following extension   theorem  of de~Jong--Oort \cite{dejongoort}.

\begin{teo}[de~Jong--Oort Extension {\cite[Thm.~5.1] {dejongoort}}]\label{teodjo}
Let $B$ be a regular scheme and $\Delta$ an nc divisor on $B$.  Set $U=B-\Delta$.  A family of smooth curves of genus $g\ge 2$ over $U$ extends to a family of stable curves over $B$ if it extends to a family of stable curves over on open subset $V$ containing each generic point of $\Delta$.
\end{teo}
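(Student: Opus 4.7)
The plan is to reduce to a two-dimensional local setting and then extend the family via a combination of a projective embedding and the Faltings--Chai Extension Theorem applied to the relative Jacobian.

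Since the statement is local on $B$, I would first reduce to the case $B = \operatorname{Spec}(R)$ with $R$ a regular local ring. The hypothesis that $V$ contains every generic point of $\Delta$ means $Z := B \setminus V$ has codimension at least two in $B$. By slicing with sufficiently general hyperplane sections (while preserving the snc structure of $\Delta$), I can further reduce to $\dim R = 2$, so that $Z = \{m\}$ is a single closed point. Uniqueness of the extension is then automatic: the separatedness of $\overline{\mathcal M}_g$ implies that any two extensions, being isomorphic on the dense open $V$, are isomorphic over all of $B$.

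For existence, I would first extend the Jacobian. Localizing at each codimension one point of $B$ gives a DVR over which the family extends to a family of stable curves (since $V$ contains every such point), and Theorem~\ref{teodmII} then implies that the relative Jacobian $J_V \to V$ has semi-abelian reduction at each codimension one point. Hence $J_V$ is a semi-abelian scheme over $V$, genuinely so at the generic points of $\Delta$ where the stable curve is not of compact type, and the Faltings--Chai Extension Theorem produces a unique extension to a semi-abelian scheme $J_B \to B$.

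Next, to extend the family of curves itself, I would use the relative tricanonical embedding $X_V \hookrightarrow \mathbb P(\mathcal E_V)$ with $\mathcal E_V := (\pi_V)_* \omega_{X_V/V}^{\otimes 3}$, which is locally free on $V$ of rank $5g-5$ since $\omega^{\otimes 3}$ is very ample on stable curves of genus $g \geq 2$. Because $B$ is regular and $Z$ has codimension two, $\mathcal E_V$ extends to a locally free sheaf $\mathcal E_B$ on $B$ via its reflexive hull. Taking the schematic closure $\overline X$ of $X_V$ inside the associated projective bundle $\mathbb P(\mathcal E_B) \to B$ produces the candidate extension, which I would then show is flat over $B$ with central fiber a stable curve of genus $g$.

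The main obstacle is verifying flatness and stability of the central fiber of $\overline X$ over $m$. Here the extended Jacobian $J_B$ provides the essential rigidification: by the Deligne--Mumford correspondence between stable reduction of curves and semi-abelian reduction of Jacobians, any nodal completion of $X_V$ whose generalized Jacobian coincides with $J_B$ must be a family of stable curves. To handle stable curves not of compact type, where the Torelli map has non-trivial fibers, one can track the additional data of the polarized semi-abelic pair in Alexeev's compactification $\bar{\mathcal A}_g^A$, use Faltings--Chai together with Alexeev's stable reduction to extend the pair to $B$, and then invoke the extended Torelli morphism $\overline{\mathcal M}_g \to \bar{\mathcal A}_g^A$ to identify the central fiber of $\overline X$ uniquely as a stable curve.
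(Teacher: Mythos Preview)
The paper does not prove this theorem directly; it is cited from de~Jong--Oort. The only argument the survey sketches is the reverse implication: once one has Cautis' corollary (extension of the map $U\to\mathcal M_g$ to a morphism $B\to\overline M_g$), the de~Jong--Oort statement follows immediately from the Abramovich--Vistoli purity lemma, which says that a morphism from the complement of a codimension~$\ge 2$ subset of a regular scheme to a separated DM stack extends. Your route is entirely different, and it has two genuine gaps.

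First, the final step does not go through. You acknowledge that flatness and stability of the closure $\overline X$ over the closed point is ``the main obstacle,'' but the resolution you propose---recover the stable curve from the extended semi-abelic pair via the extended Torelli morphism $\overline{\mathcal M}_g\to\bar{\mathcal A}_g^A$---fails because that morphism is not injective on the boundary (see the Caporaso--Viviani reference in the survey). Distinct stable curves can have isomorphic stable semi-abelic pairs, so knowing $J_B$ (even with its polarization data) does not pin down the central fiber of $\overline X$, and in particular gives no mechanism to show that the schematic closure in $\mathbb P(\mathcal E_B)$ is flat or has a nodal fiber. Note also that Theorem~\ref{teodmII}, which you invoke, is a statement over a DVR, not over a two-dimensional base; you cannot conclude from it that the closure is a stable family once the Jacobian is semi-abelian.

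Second, two of your reductions are not justified as stated. The Faltings--Chai Extension Theorem in the form quoted here requires characteristic~$0$, whereas the de~Jong--Oort theorem is valid over an arbitrary regular base; so at best your argument would yield a characteristic~$0$ version. And ``slicing with sufficiently general hyperplane sections'' to reduce to $\dim R=2$ does not by itself give the result back on the original $B$: you would need to explain why the extensions on the various two-dimensional slices glue, which is essentially the content of the theorem you are trying to prove.
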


In fact the theorem is more general, in that one can allow for a generically stable family, so long as the topological type is locally constant on $U$.   A similar result was proven by Moret-Bailly \cite{mb}, where it is required that a generically smooth  family extend to a \emph{smooth}  family over the generic points of $\Delta$.   

A consequence of the de~Jong--Oort Extension Theorem is an analogue of the Borel Extension Theorem for stable curves.  Before stating the result, let us first rephrase the previous theorem in the language of stacks.  The theorem states that given a morphism to the stack $U\to \mathcal M_g$, there is an extension $B\to \overline {\mathcal M}_g$ if and only if there is an open set $V\subseteq B$ containing $U$ and the generic points of $\Delta$ and an extension $V\to \overline{\mathcal M}_g$.

\begin{cor}[Cautis {\cite[Thm.~A]{cautis}}] \label{corcaut} Let $B$ be a regular scheme and $\Delta$ an nc divisor on $B$.  Set $U=B-\Delta$.
Given a morphism $U\to \mathcal M_g$, there is an extension $B\to \overline {M}_g$.
\end{cor}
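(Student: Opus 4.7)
The plan is to first extend after a Kummer cover and then descend using the fact that the target is a coarse moduli scheme. The conclusion is local on $B$: two extensions agreeing on the dense open $U$ would agree everywhere by separatedness of $\overline M_g$. So, by passing to an \'etale cover, we may assume $B = \operatorname{Spec} A$ is the spectrum of a regular local ring and $\Delta = V(x_1\cdots x_r)$ with $x_1,\ldots,x_r$ part of a regular system of parameters.

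For each $i$, let $\eta_i$ denote the generic point of $V(x_i)$; then $\mathscr O_{B,\eta_i}$ is a DVR. The Deligne--Mumford stable reduction theorem applied to the smooth family over its fraction field produces an integer $m_i \ge 1$ such that after the base change $x_i \mapsto t_i^{m_i}$ the family extends to a family of stable curves near $\eta_i$. Let $B' \to B$ be the finite Kummer cover $\operatorname{Spec} A[t_1,\ldots,t_r]/(t_1^{m_1}-x_1,\ldots,t_r^{m_r}-x_r)$, which is again regular with simple normal crossing boundary $\Delta' = V(t_1\cdots t_r)$ (Abhyankar's lemma). Spreading out the local extensions, the pulled-back family on $B' \setminus \Delta'$ now extends to a family of stable curves on an open set $V'\subseteq B'$ containing every generic point of $\Delta'$.

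The de~Jong--Oort Extension Theorem (Theorem~\ref{teodjo}) now applies and produces a family of stable curves on all of $B'$, hence a morphism $B' \to \overline{\mathcal M}_g$. Composing with the coarse moduli map gives $g: B' \to \overline M_g$, which agrees on $U \times_B B'$ with the pull-back of $U \to \mathcal M_g \to M_g$. The latter is invariant under the Galois group $\Gamma = \prod_i \mu_{m_i}$ of $B' \to B$, so by separatedness of $\overline M_g$ the morphism $g$ is $\Gamma$-invariant on all of $B'$. A $\Gamma$-invariant morphism from a finite cover of a normal scheme to a separated scheme descends (as in the explicit argument of \cite[Lem.~2.4]{cautis}), producing the desired morphism $B \to \overline M_g$.

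The delicate point, and the one to flag as the main obstacle, is the last descent step: it is precisely because the target is the coarse moduli scheme $\overline M_g$ and not the stack $\overline{\mathcal M}_g$ that the argument succeeds. As illustrated in \S\ref{secnegex}, nontrivial automorphisms of the stable limits (the monodromy obstruction) can prevent the morphism $B' \to \overline{\mathcal M}_g$ from descending to the stack level, but they offer no obstruction once we have passed to the underlying scheme, so Galois descent goes through without difficulty.
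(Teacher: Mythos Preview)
Your proposal is correct and follows essentially the same route the paper sketches: reduce to a local situation with $\Delta=V(x_1\cdots x_r)$, take the Kummer cover determined by the degrees coming from Deligne--Mumford stable reduction at the generic points of $\Delta$, apply the de~Jong--Oort Extension Theorem on the cover, and then descend to the coarse moduli scheme via \cite[Lem.~2.4]{cautis}. This is exactly the argument the paper alludes to when it says the corollary follows from Theorem~\ref{teodjo} ``in the same way as the Borel Extension Theorem follows from the Faltings--Chai Extension Theorem.''
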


One obtains this corollary from the  previous theorem in the same way as the analogous statement was proven for semi-abelian varieties (i.e.~in the way the Borel Extension Theorem follows from the Faltings--Chai Extension Theorem).

An independent proof of the corollary was given by Cautis \cite[Thm.~A]{cautis}.  
By virtue of $\overline{\mathcal M}_g$ being a separated Deligne--Mumford stack, it is immediate to prove the de~Jong--Oort Extension Theorem from the corollary  using the Abramovich--Vistoli purity lemma \cite[Lemma 2.4.1]{av}.

\subsection{Explicit simultaneous stable reduction for curves} \label{secade}
Having established the existence of alterations resolving rational maps to $\overline {\mathcal M}_g$, one can ask for explicit alterations in specific settings.    One place where this type of question arises naturally is in the Hassett--Keel program for the moduli space of curves.  

We will not discuss the details of the Hassett--Keel program here (see \cite{hh}), but will simply note that in this program, projective varieties  $\overline{M}_g(\alpha)$, $\alpha\in [0,1]\cap \mathbb Q$ arise, which are conjectured to parameterize curves of genus $g$ with prescribed singularities (for $0\ll \alpha \le 1$ this has been established in Hassett--Hyeon \cite{hh,hh2}).    For ``most''  $g$ and $\alpha$ there are birational maps $$\overline{M}_g(\alpha)\dashrightarrow \overline {M}_g$$  to the moduli space of stable curves.  It would be of interest to have explicit resolutions.  In general, these birational maps will lift to rational maps to the stack $\overline{M}_g(\alpha)\dashrightarrow \overline{\mathcal M}_g$, and in this way  we arrive at the related problem of simultaneous stable reduction.

With this as motivation, we will consider the following problem.  \emph{Given a generically smooth family of curves $X\to B$ with fibers having prescribed singularities, give an explicit description of a resolution of the rational map $B\dashrightarrow \overline {\mathcal M}_g$}.

The specific case we will consider   is where the singular fibers have at worst $ADE$ singularities (we review the definition of $ADE$ singularities in \S \ref{secsing}). We call such curves $ADE$ curves, and we will consider the  question (\'etale) locally.

Laza and the author have given a solution to this problem in \cite{cml}  and Fedorchuk has given an independent solution  for  singularities of type $AD$ in \cite{fedorchuk}.  
Fedorchuk's proof is based on constructions of proper moduli spaces of hyperelliptic curves $\mathscr H[k,\ell]$, where the boundary consists of certain curves with $AD$ singularities  at worst of type  $A_k$ and $D_\ell$.  The proof provides modular descriptions of the spaces arising in the processes described below.  We direct the reader to Fedorchuk \cite{fedorchuk} for more details.

  Below is the version of the result in \cite{cml}.
Since we consider the resolution question (\'etale) locally,   it suffices to understand the case where $X\to B$ is a mini-versal deformation of an  $ADE$   curve $X_0$.    The statement of the theorem uses the notion of a Weyl cover, and wonderful blow-up; these  are explicit maps, which can be determined by the root systems associated to the singularities.   We refer the reader to  \cite[\S 2,3]{cml} for more details.  The Weyl cover and wonderful blow-up in \S \ref{secexa} are examples.   For the statement of the theorem, we note that the wonderful blow-up of the Weyl cover of $B$ has the property that the pull-back of the discriminant is an nc divisor, with irreducible components corresponding to curves with fixed singularity type.

\begin{teo}[Casalaina-Martin--Laza \cite{cml}, Fedorchuk \cite{fedorchuk}]\label{teocml}
 Let $ X \to B$ be a mini-versal deformation of an  $ADE$   curve $X_0$ with $p_a(X_0)=g\ge 2$.  The wonderful blow-up of the Weyl cover of $B$ resolves the rational moduli map to the moduli scheme $\overline{M}_g$, but fails to resolve the rational moduli map to the moduli stack $\overline{\mathcal M}_g$ along the $A_{2n}$ locus of the discriminant ($n\in \mathbb N$).  
The addition of a stack structure (generically $\mathbb Z/2\mathbb Z$ stabilizers) along this locus resolves the moduli map to $\overline{\mathcal M}_g$.
\end{teo}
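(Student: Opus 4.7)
The plan is to extend the step-by-step construction carried out for the cuspidal ($A_2$) case in \S\ref{secexa} to an arbitrary ADE curve, using root-system combinatorics to organise the monodromy calculation uniformly. The starting observation is that for a mini-versal deformation of an ADE singularity, the Weyl cover $B' \to B$ and the subsequent wonderful (De Concini--Procesi) blow-up $\widetilde{B} \to B'$ are both determined by the associated root system: the pull-back of the discriminant to $\widetilde{B}$ is an nc divisor $\widetilde{\Delta}$, whose irreducible components are in natural bijection with connected sub-Dynkin diagrams of the Dynkin diagram of $X_0$, with the generic fiber over a component labelled $T$ being a stable curve of geometric genus $g$ acquiring a single isolated singularity of type $T$ (as illustrated by the $A_2$-labelled exceptional divisor in \S\ref{secwbu}).

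First I would prove extension to the moduli scheme $\overline{M}_g$. Since $\widetilde{B}$ is regular and $\widetilde{\Delta}$ is nc, Cautis's extension theorem (Corollary \ref{corcaut}) applies once we produce a morphism to $\overline{M}_g$ on a dense open set containing the generic points of $\widetilde{\Delta}$. For each component, restricting to a transverse disk yields a one-parameter degeneration to a curve with a single ADE singularity, for which classical stable reduction (as sketched in Steps 3--5 of \S\ref{secandirect} in the $A_2$ case) supplies the stable limit. Cautis's theorem then assembles these fibrewise extensions into the desired morphism $\widetilde{B} \to \overline{M}_g$.

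Extension to the stack $\overline{\mathcal{M}}_g$ is controlled by local monodromy: by Grothendieck's criterion for semi-abelian reduction (\S\ref{secab}) combined with Theorem \ref{teodmII}, the family extends to a family of stable curves near the generic point of a boundary component if and only if the local monodromy there is unipotent. A Picard--Lefschetz computation at the generic point of a $T$-labelled component, which after passage to the Weyl cover becomes a composition of simple reflections on the vanishing cycles of a stable-reduction tail for $T$, yields a unipotent monodromy for every type \emph{except} $A_{2n}$, where the stable reduction acquires a hyperelliptic tail of genus $n$ and the local monodromy is the order-two hyperelliptic involution (generalising the $-\operatorname{Id}$ computation of \S\ref{secmonocusp} and \S\ref{secmonobexa}). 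Away from the $A_{2n}$ strata, the de Jong--Oort extension (Theorem \ref{teodjo}) then provides the required morphism to $\overline{\mathcal{M}}_g$.

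Finally, the obstruction along an $A_{2n}$ component is precisely that the order-two base change of \S\ref{secdc} cannot be performed globally (the divisor need not admit a square root in $\operatorname{Pic}(\widetilde{B})$). The cure that \emph{does} globalise is to put an order-two root-stack structure, equivalently generically $\mathbb{Z}/2\mathbb{Z}$ stabilisers, along the union of the $A_{2n}$ components; this stackily trivialises the involutive monodromy while leaving the smooth locus untouched, after which a second application of Theorems \ref{teodmII} and \ref{teodjo} produces a family of stable curves over the resulting stack and resolves the rational map to $\overline{\mathcal{M}}_g$. The principal obstacle, and the technical heart of the argument, is the uniform monodromy calculation: verifying that after the Weyl cover and wonderful blow-up the local monodromy along every $T$-labelled component reduces cleanly to the monodromy of the stable-reduction tail of a single $T$-singularity, so that one can pinpoint the $A_{2n}$ types as the unique source of non-unipotent (hence non-extendable) monodromy.
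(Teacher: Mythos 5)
Your outline matches the strategy the paper sketches and attributes to \cite{cml} and \cite{fedorchuk}: the Weyl cover and wonderful blow-up put the discriminant in nc position with components labelled by singularity type, de~Jong--Oort/Cautis reduce the extension problem to the generic points of those components, the $A_{2n}$ components are identified as the only source of non-unipotent residual monodromy (generalizing the $-\operatorname{Id}$ computation of \S\ref{secmonobexa}), and a $\mathbb Z/2\mathbb Z$ root-stack structure (the global substitute for the local double cover of \S\ref{secdc}) removes the obstruction. The uniform type-by-type monodromy verification that you rightly call the technical heart is exactly what the survey defers to \cite{cml}; note also that the paper records Fedorchuk's alternative, monodromy-free detection of the $A_{2n}$ obstruction via smoothness of the total space of the family pulled back to the double cover.
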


\begin{rem}
 Let us elaborate on the final statement in the theorem concerning stacks.  There is a family of stable curves over the wonderful blow-up of the Weyl cover, except over the locus parameterizing curves with $A_{2n}$ singularities.  This locus is a collection of divisors, and there is an obstruction to extending the family over that locus.   At the generic points, the obstruction becomes trivial after taking a 
 branched double cover.
\end{rem}

We direct the reader to \cite{cml} for the proof.

 \begin{rem} As mentioned above in the section on period maps to the moduli space of abelian varieties, the method of proof of this theorem has applications to other situations  including the study of the moduli space of cubic threefolds \cite{casalaza}.
\end{rem}


\section{Simultaneous semi-stable reduction}
\label{secak}

The question of extending the Semi-stable Reduction Theorem to  higher dimensional bases is of course very natural, and was asked already in the introduction of \cite{mumetal}.    It has proven to be a difficult question; even the correct formulation of the problem is not immediately clear.
We  discuss some recent progress due to  de~Jong \cite{dejong1} and Abramovich--Karu \cite{ak}.

\subsection{A result of Abramovich--Karu}

The first issue to address is what is meant by semi-stable reduction for higher dimensional bases.  
We take the following modification of the assumptions in the statement of the Semi-stable Reduction Theorem as the starting point. \emph{We set $B$ to be an open subset of a non-singular variety, set $U\subseteq B$ to be a non-empty open subset and suppose that  
$
\pi:X\to B
$
is a surjective, projective morphism of a variety $X$ onto $B$ so that the restriction 
$
\pi_U:X|_U\to U
$
is smooth}.  

Our goal is to find  a diagram as in \eqref{eqnsrdiag} 
with $B'$ nonsingular, $f$ an alteration, $p$ a  projective modification, so that all of the geometric fibers of $\pi'$ satisfy some natural conditions.   For instance, at the very least, we would like all of the geometric fibers of $\pi'$ to be reduced.
Moreover, we could hope that all of the fibers have singularities that look at worst like smooth components meeting ``transversally''.

For instance,  when $\dim(X)=\dim(B)+1$, if one allows the total space $X'$ to be singular, then it is a result of de~Jong \cite[Thm.~5.8]{dejong1} that such a semi-stable reduction exists.  Moreover, de~Jong shows that in this case if $p$ is permitted to be an alteration, rather than a modification, then $X'$ can be taken to be smooth.   
However, for families with higher dimensional fibers, 
it is not possible in general to obtain a ``semi-stable reduction'' where the fibers all have singularities that at worst look like smooth components meeting transversally.
 For instance, the  two parameter family of surfaces defined by 
 \begin{equation}\label{eqnkaru}
(t_1-x_1x_2,t_2-x_3x_4)
\end{equation}
 precludes this (see Karu \cite[Exa.~1.12, p.21]{karuthesis}).  
Thus, in general, one needs a different definition of  ``semi-stable reduction'' to get a reasonable result.  

In light of the presentation in  \cite{mumetal}, and the family   \eqref{eqnkaru} above (which has fibers with at worst toric singularities), it is natural to change the focus to  toroidal structures.   Using this language, 
we state a  theorem of Abramovich--Karu, and then discuss some definitions after the theorem.

\begin{teo}[Abramovich--Karu {\cite[Thm.~0.3]{ak}}]
\label{T:AbrKar}
 Assume $\operatorname{char}(k)=0$ and $k=\bar k$. Let $X\to B$ be a surjective morphism of projective varieties over $k$, with geometrically integral generic fiber.  There exists a diagram as in \eqref{eqnsrdiag} 
with $X'$ a projective variety, $B'$ nonsingular, $f$ a projective alteration, $p$ a  projective strict modification, $\pi'$ a toroidal morphism,  and all of the geometric fibers of $\pi'$ equidimensional and reduced.
\end{teo}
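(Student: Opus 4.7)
The plan is to reduce the problem to a combinatorial one on the polyhedral complexes associated with toroidal embeddings, and then solve that combinatorial problem. First, since we are in characteristic zero, I would apply Hironaka's theorem to the pair $(X, \pi^{-1}(D))$, where $D \subset B$ is the discriminant of $\pi$, followed by a further log resolution of $B$. After these modifications we may assume that both $B$ and $X$ are smooth and carry snc divisors $D_B$ and $D_X \supseteq \pi^{-1}(D_B)$ with $\pi|_{X \setminus D_X}$ smooth. The pairs $(X, X \setminus D_X)$ and $(B, B \setminus D_B)$ are then toroidal embeddings without self-intersection, in the sense of \cite{mumetal}, and $\pi$ is a toroidal morphism with respect to them.

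To each toroidal embedding one associates a conical polyhedral complex with integer structure, whose cones record the toric charts at the closed strata of the boundary, and a toroidal morphism induces a morphism of such complexes $\Delta_X \to \Delta_B$. The two geometric properties we need translate as follows: equidimensionality of the fibers of $\pi'$ corresponds to the condition that each cone of $\Delta_{X'}$ map \emph{surjectively} onto a cone of $\Delta_{B'}$; reducedness of geometric fibers corresponds to the further lattice condition that, for each cone $\sigma$ of $\Delta_{X'}$, the image of $\sigma \cap N_\sigma$ equals the full lattice of its image cone in $\Delta_{B'}$. Compatible subdivisions of $\Delta_B$ and $\Delta_X$ realize toroidal birational modifications of $B$ and $X$, while refining the integer structure on $\Delta_B$ corresponds to a Kummer-type alteration of the base, just as in the examples preceding Theorem \ref{teossr}.

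The main step, and the principal obstacle, is a purely combinatorial construction: after a suitable projective subdivision of $\Delta_B$ and a compatible subdivision of $\Delta_X$, the induced morphism of complexes can be made equidimensional in the sense above. This is the analogue for arbitrary-dimensional bases of the log resolution plus base change used in the proof of Theorem \ref{teossr}, and the two-parameter example \eqref{eqnkaru} explains why a nontrivial subdivision is genuinely required here (a naive product of one-parameter reductions does not suffice). Performing the subdivision yields projective modifications $X_1 \to X$ and $B_1 \to B$ with $X_1 \to B_1$ toroidal and equidimensional. A subsequent Kummer alteration $B' \to B_1$, with ramification along each boundary divisor divisible by the $\operatorname{lcm}$ of the multiplicities appearing along the fibers, followed by normalizing the pullback $X_1 \times_{B_1} B'$ to define $X'$, makes the geometric fibers reduced (by the same local toric computation as in Remark \ref{remssrexa}), while preserving equidimensionality since that condition is combinatorial and invariant under base change. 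Finally, if desired, any remaining toric singularities of $B'$ can be removed by a further toric resolution of $\Delta_{B'}$, pulled back to $X'$, without disturbing the properties already arranged; this produces the diagram \eqref{eqnsrdiag} with all stated properties.
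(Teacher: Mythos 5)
First, note that the survey does not prove this theorem; it is quoted from \cite[Thm.~0.3]{ak}, so your proposal has to be measured against the argument actually given by Abramovich--Karu. Your treatment of the second half of the proof is faithful to theirs: the dictionary between toroidal morphisms and maps of conical polyhedral complexes, the translation of equidimensionality into ``cones map onto cones,'' the reducedness criterion as a lattice (index-one) condition, the main combinatorial lemma that a projective subdivision of $\Delta_B$ with a compatible subdivision of $\Delta_X$ makes the map of complexes equidimensional, and the final Kummer-type alteration to achieve reducedness --- this is exactly the architecture of \cite{ak}, and the example \eqref{eqnkaru} is indeed the one showing the subdivision is unavoidable.

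The genuine gap is in your first step. Applying Hironaka to $(X,\pi^{-1}(D))$ and a log resolution of $B$ produces snc divisors $D_X$ and $D_B$, and hence toroidal embeddings $(X, X\setminus D_X)$ and $(B, B\setminus D_B)$; but it does \emph{not} make $\pi$ a toroidal morphism. Toroidality of $\pi$ requires that, formally or \'etale-locally at each closed point, $\pi$ be modeled on a torus-equivariant dominant morphism of toric varieties, and merely arranging that the discriminant and its preimage be snc does not force the map to be monomial in suitable coordinates. Indeed, ``toroidalization of morphisms'' by modifications of source and target is a hard open-ended problem in its own right (resolved in low dimensions only much later), and Abramovich--Karu do not attempt it: their reduction to the toroidal case (\cite[Thm.~2.1]{ak}) instead follows de~Jong \cite{dejong1}, fibering $X\to P\to B$ through a family of curves, applying nodal/semistable reduction to the curve fibration $X\to P$ and induction on relative dimension to $P\to B$. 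This step already costs an \emph{alteration} of the base (not just a modification), and one must pass to the main component of the fiber product using the geometric integrality of the generic fiber --- a hypothesis your sketch never invokes. Without replacing your first paragraph by such an argument, the combinatorial machine in the remainder of your proof has nothing to run on.
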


A toroidal structure  on a normal variety $X$ is an open subset $U_X\subseteq X$, such that for each $x\in X$, there is a toric variety $X_{\sigma_x}$, a point $s\in X_{\sigma_x}$ and an isomorphism $\widehat {\mathscr O}_{X,x}\cong \widehat {\mathscr O}_{X_{\sigma_x},s}$ that maps the ideal of $X-U_X$ to the idea of $X_{\sigma_x}-T_{\sigma_x}$ where $T_{\sigma_x}$ is the torus of $X_{\sigma_x}$.    In other words, it is a variety together with an open set that \'etale locally looks like a toric variety together with its embedded torus.
A toroidal morphism is defined in the obvious way (see e.g.~\cite[Def.~1.3, p.247]{ak}).    We direct the reader to \cite[p.45]{ak} for the definition of a strict modification; we note that in the case that $X\to B$ is flat, $p$ will be a projective modification.

It is mentioned in \cite[Rem.~1.1]{ak} that it may also be  possible to address simultaneous semi-stable reduction using the language   of log-structures, rather than toroidal morphisms.   
We also direct the reader to \cite{ak2},  which addresses the case of schemes over  fields that are not algebraically closed. We conclude with the remark that roughly speaking, the theorem says that simultaneous semi-stable  reduction is possible if one allows for toric singularities.

\section{(Semi-)stable reduction for singularities}\label{secsing}

We now consider the (semi-)stable reduction problem locally and focus on singularities.  
The Mumford et al.~Semi-stable Reduction Theorem for one-parameter families ensures the existence of a semi-stable reduction for (generically smooth) one-parameter families of singularities.  The extensions to higher dimensional bases due to Abramovich--Karu  establish a certain form of existence in the simultaneous case.  Consequently, the  problem we will consider here is describing in more detail  semi-stable reductions for specific singularities.

Singularities of type $ADE$ will appear frequently in what follows.  Recall that these are the singularities (of dimension $n-1$, $n\ge 2$)  defined by the polynomials:
$$
\begin{array}{llll}
f_{A_k}&=& x_1^{k+1}+x_2^2+\ldots+x_n^k& k\ge 1\\
f_{D_k}&=&x_1(x_1^{k-2}+x_2^2)+x_3^2+\ldots +x_n^2& k\ge 4\\
f_{E_6}&=&x_1^4+x_2^3+x_3^2+\ldots +x_n^2 &\\
f_{E_7}&=&x_2(x_1^3+x_2^2)+x_3^2+\ldots +x_n^2 &\\
f_{E_8}&=&x_1^5+x_2^3+x_3^2+\ldots +x_n^2. &\\
\end{array}
$$

\subsection{Local stable reduction for curve singularities} \label{sechassett}
In this section we discuss some recent work of Hassett 
\cite{hassettstable} on local stable reduction  for isolated, locally planar singularities.  The main results are descriptions of the tails arising in the stable reduction process for curves.

\subsubsection{Preliminaries on local stable reduction} 
A local stable reduction of an isolated, plane curve singularity $(X_o,x)$ is defined as follows.  We consider 
$$
\pi:X\to B
$$ 
a one-parameter smoothing of $(X_o,x)$, with $X_o=\pi^{-1}(o)$ for some $o\in B$;  one can obtain such a smoothing by observing that the singularity $(X_o,x)$ will arise on some plane curve, and the Hilbert scheme containing that curve is a projective space with generic point parameterizing a smooth curve.   We then  perform semi-stable reduction following Mumford et al.~to obtain $\tilde {X}\to \tilde B$, where the central fiber is in nc position.  Set $p:\tilde {X}\to \tilde B\times_B  X$.
Finally,  take the log canonical model of $(\tilde {X},\tilde X_o)$ relative to the morphism $p$.  

 We will denote the resulting family by $ X^c\to \tilde B$;     this is called the \textbf{local stable reduction} of the family $X\to B$.    Note that $X^c$ agrees with $ \tilde B\times_B 
 X$ away from the central fiber.
By construction, the local stable reduction provides a local picture of the stable reduction for a one-parameter family of curves degenerating to a curve with a singularity $(X_o,x)$.  

We now review the definition of the tail of the local stable reduction.   The central fiber of $X^c\to \widetilde B$, which we will denote $X_o^c$, can be decomposed as $X^c_o=X_o^{\nu}\cup X_o^{T}$, where $X_o^{\nu}$ is the normalization of $X_o$ and $X_o^{T}:=\overline{X_o^c-X_o^{\nu}}$.  To fix notation, set  $X_o^{\nu}\cap X_o^{T}=\{p_1,\ldots.p_b\}$ where $b$ is the number of branches of $X_o$.  The pair $(X_o^{\nu}, \{p_1,\ldots,p_b\})$ depends only on $X_o$ and not on the choice of smoothing.  On the other hand,  the pair  $(X_o^{T},\{p_1,\ldots,p_b\})$ may depend on the smoothing, and we call this the \textbf{tail of the local stable reduction} of the family $X\to B$.   
 
\subsubsection{A result of Hassett}
 
We now mention Hassett's result that the tails arising in this process form subvarieties of the moduli space of curves.  We will use the notation $\overline M_{g,(n)}$ for the moduli space of stable curves of genus $g$, with $n$  unordered marked points.

\begin{pro}[Hassett {\cite[Prop.~3.2, p.176]{hassettstable}}]
Let $(X_o,x)$ be  a plane curve singularity with $b$ branches.  Let $\mathscr T_{X_o}$ be the set of tails obtained from the local stable reduction of each  smoothing of $X_o$.  The tails are connected, all of the same arithmetic genus $\gamma$, and  $\mathscr T_{X_o}$ is naturally a (reduced) subscheme of $\overline M_{\gamma,(b)}$.   
\end{pro}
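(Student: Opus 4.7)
The plan is to dispatch the three assertions in turn; the scheme-structure claim is the main technical step.

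\textbf{Connectedness of the tail.} The tail $X_o^T$ is the locus in $X_o^c$ lying over the singular point $x$ under the contraction $X^c \to \widetilde B \times_B X$; away from $x$ the curve $X_o$ is smooth and the modification is trivial. In the intermediate semi-stable model $\widetilde X$, the exceptional divisor over $\widetilde x$ is connected by Zariski's connectedness theorem applied to a resolution of an isolated surface singularity, and passing to the relative log canonical model only contracts chains of rational curves, which preserves connectedness. Hence $X_o^T$ is connected.

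\textbf{Constancy of $\gamma$.} I would argue by flatness. Since $X^c \to \widetilde B$ is flat with smooth generic fibre of genus $g := p_a(X_\eta)$, one has $p_a(X_o^c) = g$. As $X_o^\nu$ and the number $b$ of intersection points depend only on $X_o$, the Mayer--Vietoris sequence
\[
0 \to \mathscr O_{X_o^c} \to \mathscr O_{X_o^\nu} \oplus \mathscr O_{X_o^T} \to \mathscr O_{\{p_1,\ldots,p_b\}} \to 0,
\]
combined with the connectedness of $X_o^T$ just proved, gives
\[
\gamma \;=\; p_a(X_o) + \chi(\mathscr O_{X_o^\nu}) - b,
\]
which depends only on $X_o$ (and, a posteriori, only on the local analytic type of $(X_o,x)$ via the familiar identity $\gamma = \delta_x - b + 1$, where $\delta_x$ is the delta invariant of the singularity).

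\textbf{Subscheme structure.} The idea is to realise all tails through a single universal construction. Since $(X_o,x)$ is an isolated plane curve singularity, it admits a smooth miniversal deformation $\mathscr X \to \operatorname{Def}(X_o,x)$, and every one-parameter smoothing arises, after suitable base change, by pulling back along an arc $B \to \operatorname{Def}(X_o,x)$. Applying the simultaneous semi-stable reduction results of \S\ref{secssr} and \S\ref{secak} to $\mathscr X$ over a resolution of $\operatorname{Def}(X_o,x)$, and then passing to the relative log canonical model as in Theorem~\ref{teokollar}, one obtains, after an alteration $\mathscr D' \to \operatorname{Def}(X_o,x)$, a family $\mathscr X^c \to \mathscr D'$ whose fibres over the preimage of the discriminant are the local stable reductions of the corresponding arcs. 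The induced moduli map $\mathscr D' \to \overline M_{\gamma,(b)}$ sends this preimage onto the set of tails; endowing its image with the reduced induced scheme structure realises $\mathscr T_{X_o}$ as a subscheme.

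The main obstacle lies in this last step: one must check that the formation of relative log canonical models commutes with base change along arcs of $\operatorname{Def}(X_o,x)$, so that the tail produced by the universal family really coincides with the tail of any prescribed smoothing. A second subtlety is that distinct arcs may approach the discriminant with varying orders of tangency, so a single $\mathscr D'$ may not realise every tail as an actual fibre, and one may be forced either to further blow up $\mathscr D'$ or to pass to a suitable jet/arc space of $\operatorname{Def}(X_o,x)$ before the tail becomes a morphism in families. This is precisely the technical point where the concrete geometry of the singularity---for instance via the weighted blow-ups in Hassett's treatment---enters the argument.
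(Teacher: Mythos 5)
A preliminary caveat about the comparison: this survey does not actually prove the proposition. It is stated as a cited result of Hassett, and the only indication of method is the sentence that follows it, namely that Hassett works with deformations of the \emph{pair} $(\operatorname{Spec}\mathbb C[[x,y]],X_o)$, performs semi-stable reduction for the pair, and then passes to log canonical models. So your proposal can only be measured against that indication and against the internal logic of the statement. Your first two steps are essentially right. For connectedness it is cleaner to apply Zariski's connectedness theorem once, directly to the proper birational morphism $p\colon X^c\to\widetilde B\times_B X$: the target is a normal surface (a hypersurface with isolated singular locus is $S_2$ and regular in codimension one), and the fibre over the point above $x$ is exactly $X_o^{T}$, so no separate argument about what the lc model contracts is needed. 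The Euler-characteristic computation is correct, and together with connectedness (so $h^0(\mathscr O_{X_o^T})=1$) it gives $\gamma=\delta_x-b+1$, which indeed depends only on the germ.

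The third step, which is the actual content of the proposition, is not proved: you have outlined a strategy and then correctly named the two places where it breaks, and those are precisely the points one must overcome. Concretely: (a) a single alteration $\mathscr D'\to\operatorname{Def}(X_o,x)$ does not have all tails among its fibres, because the tail of an arc is governed by the leading coefficients of the arc with respect to the quasi-homogeneous weights of the singularity, not merely by the point of $\mathscr D'$ that its strict transform meets --- already for the cusp (the explicit computation in \S\ref{secexa}) one must pass to the Weyl cover and the wonderful blow-up (in effect a weighted blow-up with weights $(4,6)$) before the exceptional divisor parameterizes the tails, and even then only for arcs meeting it transversally; non-generic arcs produce further degenerate tails that must also be accounted for in $\mathscr T_{X_o}$. (b) The compatibility of relative log canonical models with base change along arcs, which you flag, is exactly what must be verified and is not formal. (c) Even granting a universal family, the image of the (non-proper) exceptional locus in $\overline M_{\gamma,(b)}$ is a priori only constructible, so an additional argument (e.g.\ the $\mathbb G_m$-action coming from quasi-homogeneity, or properness of the exceptional divisor of the weighted blow-up) is needed to see that $\mathscr T_{X_o}$ is a subscheme rather than a constructible set. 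These are the issues Hassett's local analysis of the pair $(\operatorname{Spec}\mathbb C[[x,y]],X_o)$ is designed to handle; as it stands the proposal is an honest plan with a genuine gap at its central step.
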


In order to describe the subscheme $\mathscr T_{X_o}$ in more detail, Hassett considers the problem of deforming the pairs $(\operatorname{Spec}\mathbb C[[x,y]], X_o)$.  He considers  a process similar to that in the construction of the local stable reduction, performing semi-stable reduction for the pair $(\operatorname{Spec}\mathbb C[[x,y]], X_o)$ and then taking a log-canonical model.  His results give explicit descriptions of tails that arise in stable reduction for 
a wide class of singularities, including the classes known as  toric and quasi-toric singularities (which include $ADE$ singularities).  For the sake of space, we restrict to the  special case of $A_n$ singularities.

\begin{teo}[Hassett {\cite[Thm.~6.2,6.3, p.185-6, p.187, Prop.~7.3]{hassettstable}}]
Suppose that $(X_o,x)$ is a plane curve singularity of type $A_n$.  Then the scheme $\mathscr T_{X_o}$ is irreducible.  
\begin{enumerate}

\item If $n=2k$, then $\mathscr T_{X_o}$ is the closure of the locus of hyperelliptic curves of genus $k$, with a marked Weierstrass point in $\overline M_{k,1}$.  

\item If $n=2k+1$, then $\mathscr T_{X_o}$ is the closure of the locus of hyperelliptic curves of genus $k$ with two conjugate marked points (i.e.~interchanged by the hyperelliptic involution) in $\overline M_{k,(2)}$.   
\end{enumerate}
\end{teo}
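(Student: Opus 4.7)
The plan is to combine an explicit weighted blow-up resolving the universal smoothing of $A_n$ with a parameter count identifying the resulting tails as a dense subset of the claimed hyperelliptic locus. Every one-parameter smoothing of $(X_o,x)$ factors through the miniversal deformation space, which can be presented as
\[
y^2=x^{n+1}+a_{n-1}x^{n-1}+\cdots+a_0
\]
over $\operatorname{Spec}\mathbb C[a_0,\ldots,a_{n-1}]$; a smoothing is then an arc $t\mapsto (a_i(t))$ with $a_i(0)=0$, and it suffices to compute the local stable reduction arc-by-arc.

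The core computation is the following weighted blow-up. Consider a smoothing of the form $a_i(t)=c_i t^{n+1-i}$ and make the base change $t=s^2$. Assign weights $\operatorname{wt}(x)=2$, $\operatorname{wt}(y)=n+1$, $\operatorname{wt}(s)=1$, and perform the weighted blow-up of the origin of $\mathbb A^3_{(x,y,s)}$. On the chart where $s$ is the blow-up direction, substituting $x=s^2x'$, $y=s^{n+1}y'$ and dividing by $s^{2(n+1)}$ produces the equation
\[
y'^2=x'^{n+1}+c_{n-1}x'^{n-1}+\cdots+c_0.
\]
For generic $(c_i)$ this is a smooth hyperelliptic curve of genus $k=\lfloor n/2\rfloor$, which appears as the exceptional divisor of the blow-up and meets the proper transform of the normalization $X_o^{\nu}$ at the point(s) at infinity. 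When $n=2k$ the polynomial has odd degree $2k+1$, so there is a single point at infinity, which is a fixed point of the hyperelliptic involution $y\mapsto -y$, hence a Weierstrass point. When $n=2k+1$ the polynomial has even degree $2k+2$, so there are two points at infinity, exchanged by $y\mapsto -y$; these are the two attaching points corresponding to the two branches of $X_o$. Taking the log canonical model contracts any intermediate rational bridges and leaves this hyperelliptic curve as the tail $X_o^T$.

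To identify $\mathscr T_{X_o}$ as a subvariety of moduli, observe that as $(c_i)\in\mathbb C^n$ varies, the tail sweeps out all pairs $(y^2=f(x),\infty)$ with $f$ a monic polynomial of degree $n+1$. Modding out by the $\mathbb G_m$-rescaling of the smoothing parameter $s$ leaves an $(n-1)$-dimensional family of tails. For $n=2k$ this equals the dimension $2k-1$ of the hyperelliptic-Weierstrass locus in $\overline M_{k,1}$, and for $n=2k+1$ it equals the dimension $2k$ of the hyperelliptic-conjugate-pair locus in $\overline M_{k,(2)}$. Thus the tail map is dominant onto the respective locus, and its image closure is exactly the claimed subvariety; irreducibility is automatic since the source is an irreducible affine space. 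Smoothings with more general $a_i(t)$ either specialize this picture (giving boundary limits of the hyperelliptic locus) or reduce to it after passing to a further base change, so they contribute no new components.

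The main obstacle will be careful bookkeeping in the log canonical contraction. The weighted blow-up above does not always produce an nc central fiber directly: depending on the parity and divisibility structure of the weights, the resolution may require further normalization or additional blow-ups of intermediate $A$-type surface singularities introduced by the base change $t=s^2$, and these contribute chains of rational curves that must be contracted. One must verify that after contraction the only surviving component besides $X_o^{\nu}$ is the hyperelliptic exceptional divisor described above, and that the attaching points are precisely the claimed Weierstrass or conjugate-pair markings. A secondary subtlety is to rule out exotic smoothings whose $a_i(t)$ have Puiseux expansions not dominated by the normal-form smoothings considered here; these should reduce to the normal-form case after a suitable rescaling of the arc, but this reduction requires explicit verification using the $\mathbb G_m$-action on the versal base and the corresponding action on tails.
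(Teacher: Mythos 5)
Your weighted blow-up is the right computation and does match the quasi-homogeneous structure Hassett exploits: with $\operatorname{wt}(x)=2$, $\operatorname{wt}(y)=n+1$, $\operatorname{wt}(s)=1$ the exceptional curve is $y'^2=x'^{n+1}+c_{n-1}x'^{n-1}+\cdots+c_0$, the parity of $n+1$ gives one Weierstrass point or a conjugate pair at infinity, and your dimension count correctly shows that the normal-form arcs $a_i(t)=c_it^{n+1-i}$ produce a dense subset of the claimed locus. The genuine gap is the containment $\mathscr T_{X_o}\subseteq\overline{H}$ (writing $\overline H$ for the closure of the marked hyperelliptic locus), i.e.\ that \emph{every} smoothing yields a tail in $\overline H$, and both of your proposed reductions fail as stated. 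A general arc is not $\mathbb G_m$-equivalent to a normal-form arc: the $\mathbb G_m$-action scales $a_i$ by $\lambda^{2(n+1-i)}$ and hence preserves the orders $\operatorname{ord}_t a_i$, whereas a general smoothing has orders unrelated to $n+1-i$; for instance the generic arc has all orders equal to $1$, and its tail (computed with $\operatorname{wt}(t)=2(n+1)$, so that only $a_0$ survives in the leading part) is the single rigid curve $y'^2=x'^{n+1}+c_0$, not a generic hyperelliptic curve. The fallback that such arcs ``specialize this picture'' begs the question, because the tail is not a continuous function of the arc: as the $A_2$ example in \S\ref{secexa} shows, the tail of a sub-arc of the versal family depends on the tangency of the arc with the strata of the (blown-up) discriminant, and the tail of a limit of arcs need not be the limit of their tails. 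Closing this gap is precisely where Hassett's proof does its real work: by performing (semi-)stable reduction for the \emph{pair} $(\operatorname{Spec}\mathbb C[[x,y]],X_o)$ he produces an explicit tail surface (a weighted projective plane for the quasi-homogeneous $A_n$) on which every tail of every smoothing appears as a member of a single irreducible linear system; your arc-by-arc computation only exhibits the generic member.

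There is a secondary gap in the reverse direction. Density of the image of the normal-form arcs gives $\overline H\subseteq\overline{\mathscr T_{X_o}}$, but the theorem asserts $\overline H=\mathscr T_{X_o}$; to reach the boundary of $\overline H$ you must either know in advance that $\mathscr T_{X_o}$ is closed, or exhibit for each stable degeneration of a marked hyperelliptic curve a smoothing realizing it as a tail (e.g.\ via non-smooth members of Hassett's linear system on the tail surface). Relatedly, your irreducibility claim (``automatic since the source is an irreducible affine space'') applies only to the closure of the normal-form tails, not to $\mathscr T_{X_o}$ itself until the first containment is established.
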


\begin{rem}
One application of these results is to the Hassett--Keel program.  More precisely, the results can be used to provide a description of resolutions of rational maps among various moduli spaces that arise in the program.   We direct the reader to   \cite[\S 4.2]{cml} for more discussion (see also \S \ref{secade} above).
\end{rem}

\subsection{Simultaneous resolution for simple surface singularities and the Weyl cover}\label{secbrtj}
We now turn our attention to surface singularities, again over $\mathbb C$.  While in general one would consider questions of semi-stable reduction, for surface singularities of type $ADE$ there is a result due to Brieskorn--Tyurina that one may in fact find simultaneous resolutions of singularities.  

First let us recall what is meant by a simultaneous resolution of singularities.  Let $\pi:X\to B$ be a flat morphism of schemes.  A simultaneous resolution of singularities of $\pi$ is a commutative diagram
$$
\begin{CD}
X'@>p>> X\\
@V\pi'VV @V\pi VV\\
B@= B\\
\end{CD}
$$
such that $p$ is proper, $\pi'$ is smooth, and for every $b\in B$, the induced morphism $X'_b\to X_b$ is birational; i.e.~it is a coherent way of resolving the singularities of the fibers of $\pi$.  

Let us now make the following observation (\cite[Exa.~4.27, p.128]{kollarmori}):  If $B$ is a smooth curve, and 
$\pi$ is smooth over $B-\{o\}$ for some $o\in B$, then $\pi$ does not admit a simultaneous resolution if $X_{o}$ is a reduced singular curve, or  $\dim X_{o}\ge 3$ and $X_o$ is singular with at worst  isolated hypersurface singularities (see also Koll\'ar--Shepherd-Barron \cite{KSB}  for more on surfaces singularities and Friedman \cite{friedman86} for more on  threefolds). 
With this in mind, Brieskorn's theorem on surface singularities becomes quite surprising.

\begin{teo}[Brieskorn--Tyurina] Let $\pi:(X,x)\to (B,o)$ be a flat morphism of germs of singularities such that fiber $(X_o,x)$ is an $ADE$ surface singularity.  Then there is a finite, surjective morphism $(B',o')\to (B,o)$ such that $\pi':X':=B'\times_B X$ admits a simultaneous resolution of singularities.    
\end{teo}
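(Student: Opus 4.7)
The plan is to reduce to the case where $\pi$ is the semi-universal deformation of $(X_o,x)$, and then invoke Brieskorn's Lie-theoretic description of this deformation together with the Grothendieck--Springer simultaneous resolution. First, I would observe that the existence of a simultaneous resolution is preserved under base change: if $(T'\to T,\ \mathcal X'\to \mathcal X)$ is a simultaneous resolution of a deformation $\mathcal X\to T$ and $(B,o)\to (T,0)$ is any morphism of germs, then setting $B'= T'\times_T B$ one obtains from the pullback $\mathcal X'\times_T B\to B'$ a simultaneous resolution of $\mathcal X\times_T B\to B$ after the finite surjective base change $B'\to B$ (both finiteness and surjectivity are preserved by pullback). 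Since $(X_o,x)$ is an isolated hypersurface singularity it admits a semi-universal deformation $(\mathcal X,x)\to (T,0)$ with smooth base $T$ of dimension equal to the Milnor number $\mu(X_o,x)$, and any flat deformation of the germ is obtained by pullback along some morphism $(B,o)\to (T,0)$. It therefore suffices to construct a finite surjective cover $T'\to T$ over which $T'\times_T\mathcal X\to T'$ admits a simultaneous resolution.

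Second, I would invoke Brieskorn's identification: for $(X_o,x)$ an $ADE$ surface singularity of type $R$, the base $T$ of the semi-universal deformation is canonically isomorphic, as a germ, to the adjoint quotient $\mathfrak h/W$, where $\mathfrak h$ is a Cartan subalgebra of the simple complex Lie algebra of type $R$ and $W$ is its Weyl group. The desired finite surjective cover is then the Weyl cover $\mathfrak h\to\mathfrak h/W$, a finite map of degree $|W|$ ramified along the hyperplane arrangement of root kernels. In type $A_n$ this is entirely transparent: the semi-universal deformation of $z_1^2+z_2^2+z_3^{n+1}=0$ is $z_1^2+z_2^2+z_3^{n+1}+\sum_{i=1}^n t_i z_3^{n-i}=0$, and the Weyl cover sends $(\lambda_1,\ldots,\lambda_{n+1})$ (with $\sum\lambda_i=0$) to the elementary symmetric functions $t_i=e_i(\lambda)$, so that the pulled-back total space becomes $z_1^2+z_2^2+\prod_i(z_3-\lambda_i)=0$, with discriminant the normal-crossing arrangement $\prod_{i<j}(\lambda_i-\lambda_j)^2=0$.

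Third, I would construct the simultaneous resolution after the Weyl cover. In type $A_n$ one changes variables to $z_1z_2=\prod_i(z_3-\lambda_i)$ and resolves by a sequence of small blow-ups along the Weil divisors $\{z_1=0,\ z_3=\lambda_i\}$ in an order prescribed by the root system; each such blow-up is globally well defined precisely because the Weyl cover has separated the hyperplanes $\{\lambda_i=\lambda_j\}$. For general $ADE$ the cleanest construction is Grothendieck's simultaneous resolution of the adjoint quotient $\chi:\mathfrak g\to\mathfrak h/W$: restricting $\chi$ to a Slodowy slice $S$ transverse to a subregular nilpotent element yields a family $S\to\mathfrak h/W$, and its pullback to $\mathfrak h$ inherits from the Grothendieck--Springer resolution $\widetilde{\mathfrak g}\to\mathfrak g$ a tautological simultaneous resolution. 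The main obstacle is the deep input (due to Brieskorn, and sharpened by Slodowy) that the family $S\to\mathfrak h/W$ is isomorphic to the semi-universal deformation of the corresponding $ADE$ singularity; once this is granted, the construction of the finite cover and its simultaneous resolution is essentially formal.
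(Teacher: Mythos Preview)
Your proposal is correct and follows the standard Lie-theoretic route to this result. Note, however, that the paper does not actually give a proof of this theorem: being a survey, it simply directs the reader to Koll\'ar--Mori for techniques and references, and then highlights that Brieskorn's Weyl group cover of the mini-versal deformation space plays an important role. Your argument is precisely a fleshed-out version of that hint: you reduce to the semi-universal deformation by the openness/versality argument, take the Weyl cover $\mathfrak h\to\mathfrak h/W$ as the required finite surjective base change, and then appeal to the Grothendieck--Springer resolution restricted to a Slodowy slice (the Brieskorn--Slodowy theorem) to produce the simultaneous resolution. This is exactly the approach the paper gestures at, and your explicit $A_n$ computation matches the paper's description of the Weyl cover in \S\ref{secbrtj}.

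One small technical point worth tightening in your first reduction step: when you form $B'=T'\times_T B$, this fiber product need not be a single germ; the preimage of $o$ may consist of several points, or $B'$ may be non-normal. You should pass to the normalization and choose a connected component (equivalently, a point $o'$ lying over $o$) to obtain a genuine germ $(B',o')\to(B,o)$. This is routine and does not affect the argument, but as written the sentence ``both finiteness and surjectivity are preserved by pullback'' elides it.
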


We direct the reader  to Koll\'ar--Mori \cite[p.129]{kollarmori} for a  discussion of a number of techniques that can be used to prove the theorem, as well as some references.   
Brieskorn's \cite{briessing} Weyl group cover of the mini-versal deformation space of an $ADE$ singularity plays an important role.  We briefly review this now.
Let $X_o$ be an $ADE$ singularity of type $T$ (i.e.~$T=A_n$, $D_n$ or $E_n$).  Let $B_T$ be a mini-versal deformation space  of $X_o$ with discriminant $\Delta_T$.  Define $W_T$ to be the Weyl group of type $T$ and $R_T$ be the corresponding root system.  Brieskorn shows there exists a Galois cover $f:B_T'\to B_T$ with covering group $W_T$ and ramification locus $\Delta_T$ such that $f^*\Delta_T$ is an arrangement of hyperplanes determined by the root system $R_T$.  The hyperplanes are in one-to-one correspondence with the roots in $R_T$ considered up to $\pm 1$.  The morphism  $f:B_T'\to B_T$ is called the \textbf{Weyl (group)  cover}.

\begin{rem}
For surfaces, $ADE$ singularities are exactly the  canonical singularities (see e.g.~\cite{kollarmori}).  Thus this special case is enough to handle surface singularities in many circumstances.  We direct the reader to \cite{KSB} for a complete description of slc surface singularities.
\end{rem}

\section{Geometric invariant theory}  \label{secgit}

One way of determining a class of objects that will provide a stable reduction theorem for a moduli problem is via GIT.  
Typically, one will rigidify the moduli problem to obtain a (projective) fine moduli scheme, and then take the quotient by a reductive group to return to a (projective) scheme   parameterizing isomorphism classes of interest.  The GIT semi-stable points  naturally provide a class of objects where a weak form of stable reduction holds.  We call this  GIT semi-stable completion (or weak stable reduction for GIT) and discuss it in more detail in \S\ref{secsrgit}.

If there are no strictly semi-stable points, then one typically  obtains a stable reduction theorem.  
Note also that the stability conditions obtained in this way will depend on the rigidification, as well as the choice of linearization.  Different choices may lead to different stable reduction theorems.  We discuss this further in \S \ref{seccomparegit}.

\subsection{Preliminaries on GIT}  
  Let $X$ be a projective variety  over an algebraically closed field $k$.  Let $G$ be a linearly reductive algebraic group over $k$ \cite[Def.~1.4, p.26]{git} 
acting on $X$, and let $L$ be an ample $G$-linearized line bundle on $X$ \cite[Def.~1.6, p.30]{git}. 

For $n\in \mathbb N$ and a section $s\in H^0(X,L^{\otimes n})$, we set $$X_s=\{x\in X: s(x)\ne 0\}.$$
 Recall from \cite[Def.~1.7]{git} that the set of semi-stable (resp. stable, resp. properly stable) points of $X$, denoted $X^{ss}$ (resp. $X^s$, resp. $X^s_0$), is the set of points $x\in X$ such that there exists a natural number $n$ and a $G$-invariant section $s\in H^0(X,L^{\otimes n})^G$  with $s(x)\ne 0$ (resp. $s(x)\ne 0$ and the action of $G$ on $X_s$ closed, resp. $s(x)\ne 0$, the action of $G$ on $X_s$ closed, and the dimension of the stabilizer of $x$ is equal to $0$).    We denote the orbit of $x$ by $G\cdot x$ and the stabilizer of $x$ by $G_x$ \cite[p.3]{git}.

  Mumford's theorem  \cite[Theorem 1.10]{git} defines the GIT quotient of $X$ under the group action.  It states that there exists  a (surjective) universally submersive \cite[15.7.8, p.245]{egaIV3},  $G$-invariant  morphism of $k$-schemes
$$
\phi:X^{ss}\to X/\!\!/_LG
$$
that is  a categorical quotient of $X^{ss}$ by the action of $G$ \cite[Def.~0.5, p.3]{git}.  This satisfies the additional property that if $x_1$ and $x_2$ are closed points of $X^{ss}$, then $\phi(x_1)=\phi(x_2)$ if and only if 
$\overline {G\cdot x_1}\cap \overline {G\cdot x_2} \cap X^{ss}\ne \emptyset$ (\cite[p.40]{git}). 
In particular,  the closed points of $X/\!\!/_LG$ are in bijection with closed orbits of closed points in $X^{ss}$.  
There is an open subset $(X/\!\!/_LG)^\circ\subseteq X/\!\!/_LG$ with the property that $X^s_0=\phi^{-1}(X/\!\!/_LG)^\circ$ (\cite[(1) p.37]{git}),  
and the induced morphism 
$$\phi^\circ:X^s_0\to (X/\!\!/_LG)^\circ$$
 is a geometric quotient; in particular the fibers over closed points are exactly the orbits of the closed points of $X^s_0$ (see \cite[Def.~0.6, p.4]{git}).   It is also shown that 
$$
X/\!\!/_LG=\operatorname{Proj}\left(\bigoplus_{n=0}^\infty H^0(X,L^{\otimes n})^G\right)
$$
(see \cite[p.40]{git}, \cite[Prop.~8.1, p.120]{dolgachev}) 
so that $X/\!\!/_LG$ is projective.

\subsection{Weak stable reduction for GIT} \label{secsrgit}
As  $X/\!\!/_LG$ is projective, any map from the generic point of a DVR  to $X/\!\!/_LG$ extends to the whole DVR.  
We now consider the question of lifting such maps from $X/\!\!/_LG$ to $X^{ss}$.

More precisely, let $R$ be a DVR over $k$, with fraction field $K=K(R)$, and with residue field $\kappa(R)=k$.  Let $S=\operatorname{Spec} R$, let $\eta=\operatorname{Spec} K$ be the generic point, and let $s=\operatorname{Spec} \kappa(R)$ be the special point.  
We will assume we are given a morphism 
$$
f:S\to X/\!\!/_LG,
$$
and we are interested in lifting $f$ to $X^{ss}$.
The following result is often referred to as semi-stable completion for GIT (Shah \cite[Prop.~2.1, p.488]{shah}, Mumford \cite[Lem.~5.3, p.57]{mumfordstable}).

\begin{teo}[Weak stable reduction for GIT]
Let $f:S\to X/\!\!/_LG$ be a morphism.  
 There exists 
  a finite extension $K'$ of $K$, so that taking $R'$ to be the integral closure of $R$ in $K'$  and setting $S'=\operatorname{Spec}R'$, 
there is a  commutative diagram
\begin{equation} 
\xymatrix
{ 
 & & X^{ss} \ar@{->}^{\phi}[d]\\
S' \ar@{->}[r] \ar@{->}^{g}[rru] & S \ar@{->}_{f\ \ \ }[r]& X/\!\!/_LG.
}
\end{equation}
Moreover, $g$ may be chosen so that $g(s')$ lies in a closed orbit, where $s'$ is the closed point of $S'$.
\end{teo}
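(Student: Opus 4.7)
\medskip

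The plan is to combine three ingredients: surjectivity of $\phi$ on points (to lift the generic point), projectivity of $X$ (to extend the lift across $S$), and the Hilbert--Mumford criterion (to repair the special fiber when the naive extension lands outside $X^{ss}$ or inside a non-closed orbit).

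First I would lift the generic point. Since $\phi:X^{ss}\to X/\!\!/_LG$ is surjective, the point $f(\eta)\in X/\!\!/_LG$ has a preimage; because $\phi$ is a morphism of finite-type $k$-schemes, one can find such a preimage defined over some finite extension $K_1/K$. Let $R_1$ denote the integral closure of $R$ in $K_1$ and set $S_1=\operatorname{Spec} R_1$. This yields a commutative diagram with $\eta_1\to X^{ss}$ lifting $\eta_1\to S_1\to S\stackrel{f}{\to} X/\!\!/_LG$. Now $X$ is projective over $k$, so the valuative criterion of properness applied to $\eta_1\to X$ produces an extension $\tilde g:S_1\to X$; a priori, $\tilde g(s_1)$ need not be semi-stable. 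If it is, then $\phi\circ\tilde g$ and $f|_{S_1}$ are two morphisms $S_1\to X/\!\!/_LG$ agreeing on the dense open $\eta_1$, hence agreeing (using separatedness of $X/\!\!/_LG$), and we are done up to the closed-orbit improvement below.

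Next I would handle the unstable case. Suppose $\tilde g(s_1)=x_s\notin X^{ss}$. By Hilbert--Mumford, after passing to a further finite extension (which we absorb into $K_1,R_1$), there is a one-parameter subgroup $\lambda:\mathbb G_m\to G$ destabilizing $x_s$ in a quantitative sense, say realizing the Kempf/Mumford instability measure $\mu(x_s,\lambda)$ which is strictly positive for unstable points. Let $u\in R_1$ be a uniformizer; after replacing $R_1$ by the integral closure in $K_1(u^{1/N})$ for a suitable $N$, one can form the modified map
\[
\tilde g'(t) := \lambda(u(t))\cdot \tilde g(t).
\]
On the punctured spectrum this differs from $\tilde g$ by a $G$-action, hence the composition with $\phi$ is unchanged; at the closed point the limit $\lim_{t\to 0}\lambda(t)\cdot x_s$ provides the new value, which by the Hilbert--Mumford/Iwahori picture has strictly smaller instability measure than $x_s$. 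The instability measure takes values in a discrete set (and is bounded below by zero), so iterating finitely many times produces an extension whose closed point lies in $X^{ss}$. The main technical obstacle is precisely this iteration: showing that each step really makes strict, discretely-measured progress, and that the requisite root extractions of the uniformizer can be accomplished within a single finite extension at the end. This is the content of the Kempf/Shah--Mumford analysis that I would cite rather than reprove.

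Finally, I would upgrade to a closed orbit. Having produced $g:S'\to X^{ss}$ lifting $f$, suppose the orbit $G\cdot g(s')$ is not closed. Then its closure in $\phi^{-1}(f(s))\subset X^{ss}$ strictly contains it and, by standard GIT (closedness of the image in an affine $G$-invariant neighborhood), contains a unique closed orbit $O'$. Again by Hilbert--Mumford, there exists a one-parameter subgroup $\lambda'$ with $\lim_{t\to 0}\lambda'(t)\cdot g(s')\in O'$, and the same ``twist by $\lambda'(u^{1/N})$'' trick produces a new lift whose closed point lies in $O'$; passage to another finite extension $K'\supseteq K_1$ accommodates the necessary root of the uniformizer. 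Since $\phi\circ g$ is unaltered on the generic point by this twist, we still have $\phi\circ g=f$ on all of $S'=\operatorname{Spec} R'$ by separatedness. This produces the required diagram, with the image of the closed point in a closed orbit as claimed.
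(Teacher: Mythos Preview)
Your approach is correct in outline and follows the classical constructive route (as in Shah and in Mumford's Lemma~5.3), but it differs from the paper's argument. The paper does not run a Hilbert--Mumford iteration at all: it simply invokes the fact that $\phi:X^{ss}\to X/\!\!/_LG$ is \emph{universally submersive}, which is part of Mumford's Theorem~1.10 in \cite{git}. Universal submersiveness already encodes the lifting property over DVRs --- given $f:S\to X/\!\!/_LG$, one obtains a surjection of spectra of DVRs $S'\to S$ together with a lift $g:S'\to X^{ss}$ essentially from the definition --- and the only remaining work is to cite Mumford's Lemma on p.~14 of \cite{git} to ensure that $S'\to S$ can be taken finite rather than merely dominant. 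This packages your Steps~2--3 into a single black box and yields a two-line proof; by contrast, your argument unpacks what is going on inside that box.

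Your route has expository value, but there is an imprecision in Step~3 worth flagging. The value of $\tilde g'=\lambda(u)\cdot\tilde g$ at the closed point is \emph{not} in general $\lim_{t\to 0}\lambda(t)\cdot x_s$: it is the limit of $\lambda(u(t))\cdot\tilde g(t)$ along $S_1$, which depends on the interplay between the order of $u$ and the way $\tilde g$ approaches $x_s$. The exponent $N$ must be chosen with this in mind, and the assertion that the resulting point has strictly smaller instability measure is not automatic --- for instance, $\lim_{t\to 0}\lambda(t)\cdot x_s$ is a $\lambda$-fixed point with the same value of $\mu(\,\cdot\,,\lambda)$ as $x_s$, so that particular one-parameter subgroup gives no improvement. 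The genuine termination argument (typically carried out on the affine cone, or via a different discrete invariant) is more delicate than your heuristic suggests. You are right to defer it to a citation, but the sketch you offer for \emph{why} the iteration terminates would not stand on its own.
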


The essential point  is the universal submersiveness of $\phi$.  Indeed, if one were only to require that $S'\to S$  be a surjective morphism of spectrums of DVRs (and not necessarily a finite morphism), then the existence of such a lift $g$ would follow immediately  from the definition of universal submersiveness (see e.g.~\cite[Rem.~3.7.6, p.50]{kollarquot97} for a well-known converse).   The additional fact that $S'\to S$ can be taken to be finite follows from the proof of Mumford \cite[Lem., p.14]{git}.

\begin{rem} \label{remalperGIT}
Let us briefly consider weak stable reduction for GIT in the context of stacks.   
The analogous statement is that the natural map from the quotient stack $\pi:[X^{ss}/G]\to X/\!\!/_LG$  is universally closed.  While this can be established by modifying the proof of weak stable reduction for GIT,  it is also a consequence of the more general fact that  $\pi:[X^{ss}/G]\to X/\!\!/_LG$ is a  good (categorical) moduli space (\cite[Thm.~4.16 (ii), \S13, Thm.~6.6]{alpergms}).  
Concretely, given  
a map $f:S\to X/\!\!/_LG$ and a generic lift $g_\eta:\operatorname{Spec}K\to [X^{ss}/G]$, then after a generically finite base change, there is a lift $g:S'\to [X^{ss}/G]$ extending the pull back of $g_\eta$.    Moreover, one may choose the lift so that the closed point of $S'$ is sent to a closed point of $[X^{ss}/G]$ (corresponding to a closed orbit), and this point is unique (see \cite[\S 2]{asv10}).
\end{rem}

\begin{rem}  Since $X/\!\!/_LG$ is projective, it follows  that  $[X^{ss}/G]$ is universally closed and of finite type over $k$ (Remark \ref{remalperGIT}). It is not always the case, however,  that  $[X^{ss}/G]$ is separated.  For instance, this will fail if $X^{ss}\ne X^s_0$, as there will be positive dimensional affine stabilizers preventing the diagonal from being proper (see also \cite[Exa.~2.15]{asv10}).  On the other hand, if $[X^{ss}/G]$ is separated, it follows that it is also proper.  
Consequently, if $\mathcal M$ is a separated stack representing a moduli problem and $\mathcal M\cong [X^{ss}/G]$, then there is a  stable reduction theorem for the moduli problem.  
(See also \cite[Def.~2.1]{asv10} for the more general notions of weakly separated and weakly proper morphisms.)
\end{rem}

\subsection{GIT stable reduction for plane curves}\label{secgitpc}  In this section we consider the example of plane quartic curves, worked out by Mumford \cite[Ch.4 \S 2]{git}.  
To do this, we start with  the associated Hilbert scheme $X=\mathbb PH^0(\mathbb P^2,\mathscr O_{\mathbb P^2}(4))$.  There is a natural action of $\mathbb PGL(3)$ given by change of coordinates; as is typical, for the sake of simplicity we consider the action of $G=SL(3)$ instead via the isogeny $SL(3)\to \mathbb PGL(3)$.    The Hilbert scheme, being a projective space,  comes equipped with a polarization $L=\mathscr O(1)$ and a natural $SL(3)$-linearization.    We set $\overline M_3^{GIT}$ to be the GIT quotient 
$$
\overline M_3^{GIT}:=\mathbb PH^0(\mathbb P^2,\mathscr O_{\mathbb P^2}(4))/\!\!/_{\mathscr O(1)}SL(3)=X/\!\!/_{L} G
$$
Using the Hilbert--Mumford index, the following is worked out in \cite[p.81-2]{git} (see also \cite[Lem.~1.4]{artebani}).  
Let $C$ be a plane quartic corresponding to a point $x\in \mathbb PH^0(\mathbb P^2,\mathscr O_{\mathbb P^2}(4))=X$.

\begin{enumerate}

\item $x\in X^s_0$ if and only if $C$ is non-singular, or $C$ has only nodes and cusps as singularities.  

\item $x\in X^{ss}-X^s_0$ if and only if $C$ is a double conic or $C$ has a tacnode.  

\item $x\in X^{ss}-X^s_0$ \emph{and has closed orbit} if and only if $C$ is a double conic or $C$ is the union of two conics, at least one of which is non-singular, and the conics meet tangentially.

\end{enumerate}

While there is not a universal family of curves over $\overline M_3^{GIT}$, there is a universal family   over $X$, and 
the weak stable reduction theorem for GIT implies the following.  Given any one-parameter family of plane quartics over a punctured disk, with fibers of type (1)-(3) above, after a finite base change, the family can be filled in to a family over the complete disk, with central fiber of type (1) or (3).  Moreover, the isomorphism class of such a  central fiber is determined by the original family over the punctured disk.

\subsection{Deligne--Mumford stable reduction revisited}  \label{secgitgieseker}
\ \ Gieseker's    construction of the moduli space of stable curves as a GIT quotient of a  Hilbert scheme  provides another proof of the Deligne--Mumford stable reduction theorem \cite[p.i]{gieseker}.    
  
  Let $g\ge 2$ and $\nu\ge 10$.  Set $\operatorname{Hilb}_{g,\nu}$ to be the irreducible component of the Hilbert scheme containing the locus of $\nu$-canonically embedded, genus $g$, non-singular curves.   Let $H_{g,\nu}\subseteq \operatorname{Hilb}_{g,\nu}$ be the locus of (Deligne--Mumford) stable curves.
Set $N=(2\nu-1)(g-1)-1$ to be the dimension of $\nu$-canonical space.  The group $SL(N+1)$ acts on $\operatorname{Hilb}_{g,\nu}$ by change of basis.   Gieseker has shown (\cite[Ch.2]{gieseker}) that there exists an $SL(N+1)$-linearized polarization $\Lambda$ on $\operatorname{Hilb}_{g,\nu}$ such that
\begin{equation}\label{eqngieseker}
H_{g,\nu}=(\operatorname{Hilb}_{g,\nu})^s_0=\operatorname{Hilb}_{g,\nu}^{ss}.
\end{equation}   
Consequently,   one obtains   $\operatorname{Hilb}_{g,\nu}/\!\!/_\Lambda SL(N+1)\cong \overline M_g$ \cite[Thm.~2.0.2]{gieseker}.   A key point is the fact that  a family $X\to B$ of (Deligne--Mumford) stable curves over a scheme $B$   can (after possibly replacing $B$ by an appropriate open subset) be embedded in $\mathbb P^N_B$ as a flat family parameterized by a morphism $B\to \operatorname{Hilb}_{g,\nu}$ (e.g.~\cite[p.13]{gieseker}).   The Deligne--Mumford stable reduction theorem (over $k$, and up to the uniqueness statement) follows  from  \eqref{eqngieseker}  and  the weak stable reduction theorem for GIT.

\subsection{Comparing stability conditions}\label{seccomparegit}
We now compare the GIT stable reduction theorems arising from  \S \ref{secgitpc} and \S \ref{secgitgieseker}, and discuss the connection with the spaces arising in the Hassett--Keel program.  

Let us fix a DVR $R$, and consider a family $X\to S=\operatorname{Spec}R$ of smooth plane quartics degenerating to a quartic with a unique singularity, which is a tacnode ($A_3$).    This is a family of GIT semi-stable curves in the sense of \S \ref{secgitpc}.   However, the central fiber is not a curve with closed orbit, and the family is not a GIT semi-stable family 
in the sense of \S \ref{secgitgieseker}.  The GIT stable reduction theorem states that after a generically finite base change $S'\to S$, one can complete the family in two different ways.
In the sense of \S \ref{secgitpc}, one can complete the family so that the central fiber is the union of two plane conics meeting in two points, which are tacnodes (see \cite[\S3.4]{hl}).  In the sense of \S \ref{secgitgieseker}, one can complete the family so that the central fiber consists of a reducible stable curve obtained as the union of an elliptic curve (the normalization of the tacnodal quartic) attached to another elliptic curve (the tail; also called an elliptic bridge) at two points.

    In short, on the one hand, we are requiring the central fiber to be a plane quartic.  On the other, we are requiring the central fiber to be a nodal curve (with finite automorphisms).   Both conditions give a ``weak'' stable reduction theorem, albeit with very different central fibers.  We direct the reader to Hassett--Hyeon \cite{hh,hh2} for more discussion of GIT stability of curves with respect to different rigidifications, and linearizations.  See also Smyth \cite{smyth09}, Alper--Smyth--van der Wyck \cite{asv10} and Alper--Smyth \cite{as12} for a stack theoretic approach to this type of problem.

In terms of the Hassett--Keel program (see \S \ref{secade}),  the space $\overline M_3^{GIT}$ of \S \ref{secgitpc} is the space $\overline  M_3(17/28)$  \cite{hl}  (and $\overline M_3=\overline M_3(1)$).    The spaces are birational, as both contain dense open subsets corresponding to smooth, non-hyperelliptic curves.  In fact, the family of curves over the subset $U$ corresponding to  non-singular curves with trivial automorphism group induces a rational map $\overline M_3^{GIT}\dashrightarrow \overline{\mathcal M}_3$.  Resolving this map is closely related to the simultaneous stable reduction for curves with $ADE$ singularities, discussed in \S \ref{secade} (see  \cite[\S 8]{casalaza} and \cite[Cor.~3.6]{cml}).

\bibliography{srbib}

\end{document}